\documentclass[draftcls,onecolumn,twoside,american]{IEEEtran}
\usepackage[T1]{fontenc}
\usepackage[latin1]{inputenc}
\usepackage{babel} 
\usepackage{amssymb}
\usepackage{amsmath}
\usepackage{amsfonts}
%

\makeatletter
\let\NAT@parse\undefined
\makeatother
\usepackage{natbib}
\usepackage{algorithm}
\usepackage{algorithmic}
\makeatletter

\providecommand{\LyX}{L\kern-.1667em\lower.25em\hbox{Y}\kern-.125emX\@}




\newtheorem{thm}{Theorem} 
\newtheorem{cor}{Corollary} 
\newtheorem{lem}{Lemma} 
\newtheorem{prop}{Proposition}

\newtheorem{dfn}{Definition}
\newtheorem{rem}{Remark}
\usepackage{verbatim}
\usepackage{setspace}

\usepackage{bbm}
\usepackage{amsbsy}
\usepackage{babel}
\usepackage{color}
\newcommand{\mathd}{\mathrm{d}}

\newcommand{\mathe}{\mathrm{e}}

\definecolor{grey}{rgb}{0.75,0.75,0.75}
\definecolor{orange}{rgb}{1.0,0.5,0.5}
\definecolor{brown}{rgb}{0.5,0.25,0.0}
\definecolor{pink}{rgb}{1.0,0.5,0.5}

\bibpunct{(}{)}{,}{a}{,}{;}
\setlength{\arraycolsep}{0.14em}
\makeatother

\begin{document}
{ \title{{ 
Coding on countably infinite alphabets}}}
\author{St\'ephane Boucheron, Aurélien Garivier and Elisabeth Gassiat}
\date{\today}
\maketitle
\newcommand{\N}{{\mathbb{N}}}
\newcommand{\R}{{\mathbb{R}}}
\newcommand{\E}{{\mathbb{E}}}
\newcommand{\PROB}{{{P}}}
\newcommand{\nml}{\text{{\sc nml}}}       
\newcommand{\Np}{\N_+}
\newcommand{\equivaut}{\Leftrightarrow}
\newcommand{\nmlnl}{\nml_n(\Lambda)}
\newcommand{\M}{{\mathfrak M}}
\newcommand{\EXP}{\mathbbm{E}}
\newcommand{\longrightarrowlim}{\mathop{\longrightarrow}\limits}
\def\1{\mathbbm{1}}

\begin{abstract}
This paper describes universal lossless coding strategies for compressing
sources on countably infinite alphabets. Classes of memoryless
sources defined by an envelope condition on the marginal distribution
provide benchmarks for coding techniques originating from the theory
of universal coding over finite alphabets. We prove general
upper-bounds on minimax regret and lower-bounds on minimax redundancy
for such source classes. The general upper bounds emphasize the role of the 
Normalized Maximum Likelihood codes with respect to minimax regret 
in the infinite alphabet context.  Lower bounds are derived by
tailoring sharp bounds on the redundancy of Krichevsky-Trofimov coders
for sources over finite alphabets. 
Up to logarithmic (resp. constant) factors the bounds are matching for
source classes defined by algebraically declining (resp. exponentially 
vanishing) envelopes. Effective and (almost) adaptive coding
techniques are described for the collection of source classes defined 
by algebraically vanishing envelopes. Those results extend our
knowledge concerning universal coding to contexts where the key tools
from 
parametric inference 
are known to fail. 
\end{abstract}

{\bf{keywords}:}
\textsc{nml}; countable alphabets; redundancy; adaptive compression;
minimax;

\section{Introduction}
\label{sec:intro}


This paper is concerned with  the problem of universal coding on a countably
infinite alphabet $\mathcal{X}$ 
(say the set of positive integers $\Np$ or
the set of integers $\N$ ) 
as described for example by
\cite{MR2097043}. 
Throughout this paper, a source on the countable alphabet $\mathcal{X}$ 
is a probability distribution on the set $\mathcal{X}^\N$ of infinite sequences of symbols from $\mathcal{X}$
 (this set is endowed with the $\sigma$-algebra generated by sets of the form $\prod_{i=1}^n\{x_i\}\times \mathcal{X}^\N$ where all $x_i\in \mathcal{X}$ and $n\in \N$).
The symbol $\Lambda$ will be used to  denote various classes of sources on the
countably infinite alphabet $\mathcal{X}.$ 
The sequence of symbols emitted by a source is denoted by the ${\cal
  X}^{\mathbbm{N}}$-valued random
variable $\mathbf{X}=\left(X_n\right)_{n\in\N}. $ 
If $\PROB$  denotes the distribution of $\mathbf{X},$ $\PROB^n$ denotes the
distribution of $X_{1:n}=X_1,...,X_n,$ and we 
let $\Lambda^n=\{\PROB^n : \PROB \in\Lambda\}$. 
For any countable set $\mathcal{X}$, let $\M_1(\mathcal{X})$ be the set of all probability measures
on $\mathcal{X}$.

From Shannon noiseless coding Theorem \citep[see][]{cover:thomas:1991},  the binary
entropy of $\PROB^n,$ 
$H(X_{1:n})=\E_{\PROB^n}\left[-\log \PROB(X_{1:n})\right]$ provides a
tight lower bound on the expected number of binary symbols needed to
encode outcomes of $\PROB^n$. 
Throughout the paper, logarithms are in base $2$. 
In the following, we shall only consider finite entropy sources on countable alphabets,
and we implicitly assume that $H(X_{1:n})<\infty$.
The \emph{expected redundancy} of any distribution $Q^n\in
 {\mathfrak{M}}_1({\mathcal{X}}^n)$, defined as
 the  difference between the expected code length 
 $\E_\PROB\left[-\log Q^n(X_{1:n})\right]$ and $H(X_{1:n}),$ is equal to
the Kullback-Leibler divergence (or relative entropy)  $D(\PROB^n,
 Q^n)=\sum_{\mathbf{x}\in {\cal X}^n} \PROB^n\{\mathbf{x}\} 
 \log \frac{\PROB^n(\mathbf{x})}{Q^n(\mathbf{x})} =\EXP_{\PROB^n}\left[\log 
 \frac{\PROB^n(X_{1:n})}{Q^n(X_{1:n})}\right]$.


Universal coding attempts to develop sequences of coding probabilities $(Q^n)_n$  so as to
minimize expected redundancy over a whole class of sources. 
Technically speaking,  several distinct notions of universality have been considered in the
literature. A positive, real-valued function $\rho(n)$  is said to be a strong (respectively weak)
\emph{universal redundancy rate} for a class of sources $\Lambda$ if there
exists a sequence of coding probabilities $(Q_n)_n$ such that for all $n,$
$R^+(Q^n, \Lambda^n)=\sup_{P\in \Lambda} D(P^n,Q^n)\leq \rho(n) $ 
(respectively for all $P\in \Lambda,$ there exists a constant $C(P)$ such
that  for all $n,$ $ D(P^n,Q^n)\leq C(P)\rho(n) $). 
A redundancy rate $\rho(n)$
is said to be non-trivial if $\lim_n \frac{1}{n}\rho(n)=0.$
Finally a class $\Lambda$  of sources will be said to be \emph{feebly universal}
if there exists a single sequence of coding probabilities
$(Q^n)_n$  such that $\sup_{\PROB \in \Lambda} \lim_n \frac{1}{n}
D(\PROB^n,Q^n) = 0 \, $ (Note that this notion of feeble universality
is usually called weak universality, \cite[see][]{MR514346,MR1281931}, we deviate from the tradition, in
order to avoid confusion with the notion of weak universal redundancy rate).  

The \emph{maximal redundancy}  of $Q^n$ with respect to $\Lambda$ is
defined by: 
$$R^+(Q^n, \Lambda^n)=\sup_{\PROB\in\Lambda} D(\PROB^n, Q^n) \, . $$ 
The infimum  of $R^+(Q^n,\Lambda^n)$ is called the \emph{minimax redundancy} with respect to $\Lambda$: 
$$R^+(\Lambda^n)=\inf_{Q^n\in {\M}_1\left({\mathcal{X}}^n\right)}
R^+(Q^n,\Lambda^n).$$ It is the smallest strong universal redundancy rate
for $\Lambda.$ When finite,  it is often called the information radius
of~$\Lambda^n.$ 

As far as finite alphabets are concerned, it is well-known  that the
class of stationary ergodic sources is feebly universal. This is witnessed by the performance 
of Lempel-Ziv codes \citep[see][]{cover:thomas:1991}. It is also known that 
the class of stationary ergodic sources over a finite alphabet does
not admit any non-trivial weak universal redundancy rate \citep{shields:1993}.
On the other hand, fairly large  
 classes of sources admitting strong universal redundancy rates  and
 non-trivial weak universal redundancy rates have been exhibited
\citep[see][ and references
therein]{barron:rissanen:yu:1998,catoni:2004}.
In this paper, we will mostly focus on strong universal redundancy rates for classes 
of sources over infinite alphabets. 
Note that in the latter setting, even feeble universality should not be
taken for granted: the class of {memoryless} processes on $\Np$ is not  
{feebly universal}. 

\cite{MR514346}   characterized feebly  universal
classes, and the argument was simplified by \cite{MR1281931,gyorfi1993uns}.
Recall that the entropy rate $H(P)$ of a stationary source is defined as 
$\lim_n H(P^n)/n.$ This result may be phrased in the following way.
\begin{prop} \label{prop:kieffer}
A class $\Lambda$ of stationary sources over a countable alphabet ${\mathcal{X}}$
 is feebly universal if and only if there exists a probability distribution 
 $Q\in \M_1(\mathcal{X})$ such that for every
$\PROB\in\Lambda$ with finite entropy rate, $Q$ satisfies  $\E_\PROB\log\frac{1}{Q(X_1)}<\infty$ or equivalently 
$D(\PROB^1,Q)<\infty .$
\end{prop}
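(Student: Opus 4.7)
The plan is to prove the two implications separately.

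\textbf{Necessity} ($\Rightarrow$). Given a feebly universal sequence $(Q^n)$, write $Q^1_{(n)}$ for the first marginal of $Q^n$ and set $Q = \sum_{n\geq 1} 2^{-n} Q^1_{(n)} \in \M_1(\mathcal{X})$. The pointwise inequality $Q \geq 2^{-n} Q^1_{(n)}$ yields $\log(1/Q)\leq n+\log(1/Q^1_{(n)})$, hence $D(P^1,Q) \leq n + D(P^1,Q^1_{(n)})$. The chain rule for Kullback--Leibler divergence gives $D(P^1,Q^1_{(n)}) \leq D(P^n,Q^n)$. Feeble universality forces $\frac{1}{n}D(P^n,Q^n)\to 0$, so $D(P^n,Q^n)<\infty$ for some $n$, whence $D(P^1,Q)<\infty$.

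\textbf{Sufficiency} ($\Leftarrow$). Starting from $Q$, the idea is to build $(Q^n)$ as a mixture of ``truncate-and-code'' distributions adapted to an exhaustive increasing sequence of finite sets $A_k \uparrow \mathcal{X}$. Let $\phi_k:\mathcal{X}\to A_k\cup\{\ast\}$ collapse $A_k^c$ to a reserved symbol $\ast$, and let $R_k^n$ be any feebly universal coder for stationary sources on the finite alphabet $A_k \cup \{\ast\}$ (a Lempel--Ziv type code will do). Set
\[
Q_k^n(x_{1:n}) = R_k^n\bigl(\phi_k(x_1),\dots,\phi_k(x_n)\bigr)\prod_{i:\, x_i \notin A_k} \frac{Q(x_i)}{Q(A_k^c)},
\]
and take $Q^n = \sum_{k\geq 1} w_k Q_k^n$ with positive weights $w_k$ summing to $1$. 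Because $Q^n \geq w_k Q_k^n$, one obtains $\frac{1}{n}D(P^n,Q^n) \leq \frac{1}{n}D(P^n,Q_k^n)+\frac{\log(1/w_k)}{n}$, so it suffices to prove that $\lim_n \frac{1}{n}D(P^n,Q_k^n)$ tends to $0$ as $k\to\infty$.

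Expanding $-\log Q_k^n$ and using stationarity of $P$ gives
\[
\lim_n \frac{1}{n} D(P^n, Q_k^n) = \bigl(H_k - H(P)\bigr) + \E_P\!\left[\mathbbm{1}_{\{X_1\notin A_k\}}\log\frac{1}{Q(X_1)}\right] - P(A_k^c)\log\frac{1}{Q(A_k^c)},
\]
where $H_k$ is the entropy rate of the projected stationary process $(\phi_k(X_i))_i$ and the limit in $n$ exists by the assumed universality of $R_k^n$ on $A_k\cup\{\ast\}$. The pointwise bound $\log(1/Q(x)) \geq \log(1/Q(A_k^c))$ for $x \in A_k^c$ shows that the last term is dominated by the second, and both vanish as $k \to \infty$ by dominated convergence applied to the hypothesis $\E_P \log(1/Q(X_1)) < \infty$. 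This hypothesis also forces $H(P^1) \leq \E_P \log(1/Q(X_1)) < \infty$, so the inequality $H(X_{1:n}) - H(\phi_k(X_{1:n})) \leq n\, H(X_1 \mid \phi_k(X_1))$ combined with another application of dominated convergence yields $H_k \uparrow H(P)$. The main obstacle is in orchestrating these three limits---the horizon $n$, the mixture index $k$ used to absorb the $\log(1/w_k)$ penalty, and the truncation index that simultaneously controls the finite-alphabet redundancy and the tail---and in appealing to the classical existence of feebly universal coders on each finite alphabet $A_k\cup\{\ast\}$.
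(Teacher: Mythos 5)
The paper states Proposition~\ref{prop:kieffer} without proof --- it is attributed to Kieffer (1978) and the later simplification by Kieffer and by Gy\"orfi, P\'ali, and van der Meulen --- so there is no in-paper argument to compare against. Your proof is correct and is in the same spirit as the cited literature: necessity by marginalizing the feebly universal code sequence and invoking the data-processing inequality $D(P^1,Q^1_{(n)})\leq D(P^n,Q^n)$; sufficiency by a censoring construction (a finite-alphabet universal coder on $A_k\cup\{\ast\}$ for the truncated process, a $Q$-based conditional coder for the censored symbols, and a mixture over the truncation level $k$ to absorb the prior weight). It is also conceptually close to the \texttt{CensoringCode} algorithm developed later in the paper, which may be worth remarking.

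Two points deserve a slightly firmer hand. First, $R^n_k$ must be feebly universal for \emph{all} stationary processes on $A_k\cup\{\ast\}$, not only the stationary ergodic ones, since the push-forward $(\phi_k(X_i))_i$ need not be ergodic even when $X$ is; this is true, but is cleanest to justify via a mixture of Krichevsky--Trofimov Markov coders over all orders together with the ergodic decomposition, rather than a bare citation of Lempel--Ziv. Second, since the definition of feeble universality uses $\lim_n$, one should close the argument by observing that $0\leq\liminf_n\frac{1}{n}D(P^n,Q^n)\leq\limsup_n\frac{1}{n}D(P^n,Q^n)\leq\inf_k\bigl\{(H_k-H(P))+\delta_k\bigr\}=0$, which shows the limit exists and is zero. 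The two tail estimates --- $\delta_k\to 0$ from integrability of $\log(1/Q(X_1))$, and $H_k\to H(P)$ from the inequality $H(P)-H_k\leq H(X_1\mid\phi_k(X_1))$ together with $H(P^1)<\infty$ --- are both handled correctly.
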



Assume that $\Lambda$ is parameterized by $\Theta$ and that
$\Theta$ can be equipped with (prior) probability distributions $W$ 
in such a way that $\theta\mapsto  P^n_\theta\{A\}$ is a random variable (a measurable mapping)
  for every
$A\subseteq {\cal X}^n.$
A convenient way to derive lower bounds on $R^+(\Lambda^n)$ consists
in using the relation 
$\E_{W}[D(P_{\theta}^n, Q^n)]\leq R^+(Q^n,\Lambda^n).$

The sharpest lower bound is obtained by optimizing the prior
probability distributions,
it is called the maximin bound
$$ \sup_{W \in \mathfrak{M}_1(\Theta)} \inf_{Q^n\in
 \mathfrak{M}_1({\mathcal{X}}^n)} \E_W[D(\PROB_\theta^n, Q^n)]
.$$ 
It has been proved in a series of papers
\citep{gallager:1968,MR0465519,MR1454958}  \citep[and could also have been derived from
a general minimax theorem by ][]{MR0097026}  that such a lower
 bound is tight.  
\begin{thm} \label{thm:sion}
Let $\Lambda$ denote a class of sources over some finite or countably infinite alphabet.
 For each $n,$ the minimax redundancy   over $\Lambda$
 coincides with 
$$
R^+(\Lambda^n)=\sup_{\Theta, W \in \mathfrak{M}_1(\Theta)} \inf_{Q^n\in
 \mathfrak{M}_1({\mathcal{X}}^n)} \E_W[D(\PROB_\theta^n, Q^n)]\, ,
$$
where $\Theta$ runs over all parameterizations of countable subsets of
$\Lambda.$

If the set $\Lambda^n=\{\PROB^n~:~\PROB\in \Lambda\}$ is not pre-compact
with respect to the topology of weak convergence, then both sides  are infinite. 
A thorough account of topological issues on sets of probability measures can be found in \citep{MR1932358}.
For the purpose of this paper, it is enough to
recall that: first, a subset of a metric space is pre-compact if for any $\epsilon>0,$  it can be covered by a finite number of open balls with radius at most $\epsilon>0$; second, a sequence $(Q_n)_n$ of probability distributions  converges with respect to the topology of weak convergence toward the probability distribution $Q$ if and only if for any bounded continuous function $h$ over the support of $Q_n$'s, $\mathbbm{E}_{Q_n} h\rightarrow \mathbbm{E}_Q h.$ This topology can be metrized using the L\'evy-Prokhorov distance.  

Otherwise the  maximin and minimax average redundancies are finite and  coincide; 
moreover, the minimax redundancy is  achieved by the mixture  coding
 distribution $Q^n(.)=\int_\Theta \PROB_\theta^n(.) W(\mathrm{d}\theta)$ where
 $W$ is the least favorable prior. 
\end{thm}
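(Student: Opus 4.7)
My plan is to read the statement as a Sion-type minimax identity supplemented by a Prokhorov topological dichotomy, and to argue in three layers: finite subclasses, countable subclasses, and the full class $\Lambda$, with the pre-compactness alternative handled separately. Writing $R^-(\Lambda^n)$ for the right-hand-side maximin, the easy direction $R^-(\Lambda^n)\leq R^+(\Lambda^n)$ is immediate: for any countable parameterization $\Theta\subset\Lambda$, any prior $W\in\M_1(\Theta)$, and any coder $Q^n$,
\[
\E_W\bigl[D(\PROB_\theta^n,Q^n)\bigr]\;\leq\;\sup_{\theta\in\Theta}D(\PROB_\theta^n,Q^n)\;\leq\;R^+(Q^n,\Lambda^n),
\]
and optimizing over $Q^n$ then $(\Theta,W)$ closes this inequality.

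For the reverse, the engine is Sion's minimax theorem applied to a finite subclass $\{\PROB_1,\dots,\PROB_k\}\subset\Lambda$: the payoff $(w,Q^n)\mapsto\sum_i w_iD(\PROB_i^n,Q^n)$ is linear in $w$ on the compact simplex $\Delta_k$ and convex lower-semicontinuous in $Q^n$ on $\M_1(\mathcal{X}^n)$, with inner infimum uniquely attained at the Bayes mixture $Q_w^n=\sum_i w_i\PROB_i^n$ and equal to $H(Q_w^n)-\sum_i w_iH(\PROB_i^n)$. Sion yields the finite-class equality. A monotone truncation in $K$ combined with a tightness/lower-semicontinuity extraction of subsequential limits of the truncated Bayes optima extends this to any countable subclass $\Lambda_0=\{\PROB_j\}_{j\geq 1}$. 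Finally, a pre-compact $\Lambda^n$ is separable in the Polish weak topology on $\M_1(\mathcal{X}^n)$, so a countable dense $\Lambda_0\subset\Lambda$ together with the lower-semicontinuity of $D$ in its second argument gives $R^+(\Lambda^n)=R^+(\Lambda_0^n)\leq R^-(\Lambda^n)$, closing the reverse inequality.

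In my view, the main obstacle is the pre-compactness dichotomy together with the attainment statement. If $\Lambda^n$ is not tight, fix $\epsilon>0$: any coder $Q^n$ is itself tight on the countable space $\mathcal{X}^n$, so one can choose finite sets $F_m$ with $Q^n(F_m^c)\downarrow 0$ and $\PROB_m^n\in\Lambda^n$ with $\PROB_m^n(F_m^c)\geq\epsilon$; the log-sum inequality then forces $D(\PROB_m^n,Q^n)\to\infty$, whence $R^+(Q^n,\Lambda^n)=\infty$ for every $Q^n$, and the identity just proved propagates $\infty$ to $R^-(\Lambda^n)$. Conversely, under pre-compactness the weak closure $\overline{\Lambda^n}$ is compact metrizable, Prokhorov's theorem makes $\M_1(\overline{\Lambda^n})$ weakly compact, and the maximin functional $W\mapsto H\!\bigl(\int\PROB_\theta^n\,W(\mathrm{d}\theta)\bigr)-\int H(\PROB_\theta^n)\,W(\mathrm{d}\theta)$ is upper-semicontinuous on it; a maximizer $W^*$ exists, and the Bayes mixture $Q^{n,*}=\int\PROB_\theta^n\,W^*(\mathrm{d}\theta)$ is then the minimax coder. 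The delicate technical steps are the semi-continuity verification (where entropy need not be weakly continuous) and the approximation of a possibly uncountably supported $W^*$ by priors on countable subsets, both of which rely on the separability inherited from pre-compactness.
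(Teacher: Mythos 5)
The paper itself does not prove Theorem~\ref{thm:sion}: it attributes it to Gallager, Davisson--Leon-Garcia and Haussler, and notes that it follows from Sion's minimax theorem. So there is no in-paper argument to compare against; your sketch has to be judged on its own. The overall architecture you use --- the trivial maximin $\leq$ minimax inequality, Sion on finite sub-simplices with the Bayes mixture as inner minimizer, a truncation/extraction argument to pass to countable subclasses, separability to pass to the full class, and a disjoint-mass argument for the non-tight alternative --- is indeed the standard route (essentially Haussler's), and the outline is sound.

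That said, several steps are stated more confidently than they are established. (i) The ``tightness/lower-semicontinuity extraction'' needs an actual tightness proof for the sequence of truncated Bayes optima $Q_K^*$. This is \emph{not} a consequence of the bound $D(P_1^n,Q_K^*)\leq r$ alone (one can cook up $Q_K$ satisfying that bound whose mass escapes to infinity). The right argument is that $Q_K^*$ lies in the convex hull of $\Lambda_0^n$, and that tightness of $\Lambda_0^n$ itself follows from the \emph{contrapositive} of the dichotomy you prove last: if $\sup_K r_K<\infty$ then $\Lambda_0^n$ is tight (your disjoint-mass/log-sum argument, applied in contrapositive). As written the extraction is floated without this, so there is a circularity that needs to be untangled by ordering the steps correctly. (ii) The densification step uses lower semicontinuity of $D$ in its \emph{first} argument (the one being varied over the dense subset), not the second as stated; $D$ is jointly lower semicontinuous so the conclusion holds, but the reason given is wrong. (iii) The claimed upper semicontinuity of $W\mapsto H(Q_W)-\int H(P_\theta^n)\,W(\mathrm{d}\theta)$ is genuinely delicate: on a countable alphabet entropy is only lower semicontinuous, so this functional is a difference of lower semicontinuous maps, which is indeterminate in general. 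One usually instead writes $I(W)=\inf_Q \E_W D(P_\theta^n,Q)$ and argues that this is an infimum of affine functionals, but since $D(\cdot,Q)$ is unbounded and only lower semicontinuous in $\theta$, those affine functionals are themselves lower semicontinuous rather than continuous, and an infimum of lower semicontinuous functions is not upper semicontinuous. Getting the attainment of the least-favorable prior requires more care than your sketch acknowledges, and this is precisely the part where the cited references do the heavy lifting. You flag this step as delicate, which is appropriate; but as it stands there is a real gap here, not just a missing routine verification.
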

Another  approach to universal coding considers   \emph{individual
sequences}  \citep[see][ and references therein]{MR1168747,cesa-bianchi:lugosi:2006}. 
Let the \emph{regret} of a coding distribution $Q^n$ on string
$\mathbf{x}\in\Np^n$ with respect to $\Lambda$ be 
$\sup_{\PROB^n\in\Lambda} \log \PROB^n (\mathbf{x})/Q^n(\mathbf{x}).$  
Taking the maximum with respect to $x\in \Np^n$, and then optimizing 
over the choice of $Q^n,$ we get the
\emph{minimax regret}: 
$$R^*(\Lambda^n) = \inf_{Q^n\in \M_1(\mathcal{X}^n)}\max_{x\in\Np^n} \sup_{P \in \Lambda}\log \frac{\PROB^n(x)}{Q^n(x)} \,  .$$ 

In order to provide proper insight, let us recall
the precise asymptotic bounds on minimax redundancy and regret  for
memoryless sources over  finite alphabets  \citep[see][ and references
  therein]{barron:clarke:1990,barron:clarke:1994,barron:rissanen:yu:1998,xie:barron:1997,xie:barron:2000,MR2097043,
catoni:2004,szpankowski:1998,MR2096987}. 

\begin{thm}\label{prop:finitealph}
Let $\mathcal{X}$ be an alphabet of $m$ symbols, and $\Lambda$ denote the class of memoryless
processes on $\mathcal{X}$ then 
\begin{eqnarray*}
\lim_n \left\{R^+(\Lambda^n)-  \frac{m-1}{2}\log\frac{n}{2\pi\mathe } \right\} &=&  \log
\left(\frac{\Gamma(1/2)^m}{\Gamma(m/2)} \right) \, \label{prop:finitealph:clarke:barron}
\\
\lim_n \left\{R^*(\Lambda^n)-  \frac{m-1}{2}\log\frac{n}{2\pi\phantom{e}}\right\} &= &\log
\left(\frac{\Gamma(1/2)^m}{\Gamma(m/2)} \right) \, .
\end{eqnarray*}
For all $n\geq 2$:
\begin{equation*}
  R^*(\Lambda^n) \leq  \frac{m-1}{2}\log{n} + 2 \, . 
\end{equation*}
\end{thm}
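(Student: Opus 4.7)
The plan is to handle the regret equality via Shtarkov's Normalized Maximum Likelihood (NML) distribution and the redundancy bounds via a Bayesian mixture with Jeffreys' prior on the simplex (the Krichevsky--Trofimov coder), both of which are classically known to be asymptotically minimax for memoryless sources on a finite alphabet. The two stated asymptotics differ only by the additive $\frac{m-1}{2}\log \mathe$, which reflects exactly the gap between the regret-optimal NML code and the redundancy-optimal Bayes code; this gap will emerge naturally from the two calculations.

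For the minimax regret, I would start from the Shtarkov identity $R^*(\Lambda^n) = \log C_n$ with $C_n = \sum_{x\in\mathcal{X}^n}\sup_{\PROB\in\Lambda}\PROB^n(x)$, the minimizer being the NML distribution $Q^n_{\nml}(x) = \sup_{\PROB}\PROB^n(x)/C_n$. For memoryless sources on $m$ symbols the supremum is attained by the empirical measure, so $\sup_{\PROB}\PROB^n(x) = \prod_{i=1}^m (n_i/n)^{n_i}$ where $n_i$ is the count of symbol $i$ in $x$, and grouping by types yields $C_n = \sum_{n_1+\dots+n_m=n}\binom{n}{n_1,\dots,n_m}\prod_i (n_i/n)^{n_i}$. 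Applying Stirling to each interior summand gives $\binom{n}{n_1,\dots,n_m}\prod_i (n_i/n)^{n_i} \sim (2\pi n)^{(1-m)/2}\prod_i (n_i/n)^{-1/2}$, and after the change of variables $p_i := n_i/n$ the sum is recognized as a Riemann approximation of $n^{(m-1)/2}(2\pi)^{-(m-1)/2}\int_{\Delta_{m-1}}\prod_i p_i^{-1/2}\mathrm{d}p$. The Dirichlet integral evaluates to $\Gamma(1/2)^m/\Gamma(m/2)$, whence the claimed regret asymptotic.

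For the redundancy asymptotic, the upper bound comes from the Krichevsky--Trofimov mixture $Q^n_{\text{KT}}(x) = \int_{\Delta_{m-1}}\PROB^n_\theta(x)\,w_J(\mathrm{d}\theta)$ with Jeffreys' prior $w_J\propto \prod_i p_i^{-1/2}$: a Laplace expansion of the posterior (the Clarke--Barron analysis) yields $D(\PROB^n_\theta, Q^n_{\text{KT}}) = \frac{m-1}{2}\log\frac{n}{2\pi\mathe} + \log\!\big(\Gamma(1/2)^m/\Gamma(m/2)\big) + o(1)$ uniformly over compacts of the open simplex, using $\det I(\theta)=1/\prod p_i$. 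The matching lower bound follows from Theorem~\ref{thm:sion} applied with $W=w_J$: the Bayes risk equals the mutual information $I(\theta;X_{1:n}) = H(W) - \E_W[H(W(\cdot\mid X_{1:n}))]$, and by Bernstein--von Mises (with a uniform integrability argument to pass from convergence in distribution of the posterior to convergence of its entropy) one has $\E_W[H(W(\cdot\mid X_{1:n}))] = \frac{m-1}{2}\log\frac{2\pi\mathe}{n} + \frac{1}{2}\E_W[\log\det I(\theta)^{-1}] + o(1)$, producing the same constant.

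The explicit non-asymptotic bound $R^*(\Lambda^n)\leq \frac{m-1}{2}\log n + 2$ is obtained by revisiting $C_n$ with Robbins' sharp form of Stirling: each interior multinomial term is bounded by $(2\pi n)^{(1-m)/2}\prod_i (n_i/n)^{-1/2}$ up to an $\exp(1/(12n))$ factor, so the sum over interior types is dominated by $n^{(m-1)/2}(2\pi)^{-(m-1)/2}\Gamma(1/2)^m/\Gamma(m/2)$; boundary types (with at least one $n_i=0$) are handled by peeling off faces of the simplex recursively and shown to add only $O(1)$ to $\log C_n$. The residual constant $\log\!\big(\Gamma(1/2)^m/\Gamma(m/2)\big) - \frac{m-1}{2}\log(2\pi)$ is easily checked, via Stirling for $\Gamma(m/2)$, to be at most $2$ for every $m\geq 2$ and $n\geq 2$. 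The main obstacle is precisely the boundary accounting where Stirling's approximation degenerates and the Dirichlet-integral identity does not apply directly; this is the only place in the argument where one cannot simply invoke the asymptotic computation of the first paragraph.
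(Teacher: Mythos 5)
Your sketches of the two asymptotic identities follow the same classical lines that the paper simply cites (Clarke--Barron / Xie--Barron for the redundancy constant, Shtarkov normalizer plus Stirling plus a Dirichlet integral for the regret constant); the paper does not re-prove these two limits, it refers the reader to the literature, so there is nothing to compare there. The substantive content of the paper's own proof is the non-asymptotic inequality $R^*(\Lambda^n)\le\frac{m-1}{2}\log n+2$, and here your route diverges from the paper's and leaves a genuine gap. The paper does not try to control the Shtarkov sum $C_n=\sum_{\mathbf{x}}\hat{p}(\mathbf{x})$ directly. Instead it upper-bounds $R^*(\Lambda^n)$ by the worst-case \emph{regret of the Krichevsky--Trofimov mixture coder}, for which there is a closed-form bound $\log\bigl(\Gamma(n+\tfrac{m}{2})\Gamma(\tfrac12)\big/\bigl(\Gamma(n+\tfrac12)\Gamma(\tfrac{m}{2})\bigr)\bigr)$, and then applies the exact Stirling form $\Gamma(x)=x^{x-1/2}e^{-x}\sqrt{2\pi}\,e^{\beta/(12x)}$ with $\beta\in[0,1]$ to this single ratio of Gamma functions. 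Everything reduces to elementary scalar inequalities plus a finite check for $m,n\in\{2,\dots,8\}$; there is no sum over types and no boundary-of-simplex issue at all.

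Your plan, by contrast, attacks $C_n$ head-on by applying Robbins' Stirling to each multinomial term, and you flag yourself that the boundary types with some $n_i=0$ --- where $\prod_i(n_i/n)^{-1/2}$ diverges and the Riemann/Dirichlet approximation breaks down --- are ``the main obstacle'' and you do not resolve them. That is a real gap, and it is not a small one: the claim demands the \emph{explicit} constant $2$, uniformly in $n\ge 2$, so arguing that boundary faces contribute ``only $O(1)$'' is insufficient, and the $o(1)$ error in the interior Riemann approximation is not sign-controlled for small $n$. As written, the step from the asymptotic $\log C_n\sim\frac{m-1}{2}\log\frac{n}{2\pi}+\log\frac{\Gamma(1/2)^m}{\Gamma(m/2)}$ to the uniform bound $\log C_n\le\frac{m-1}{2}\log n+2$ is not justified. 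You should either carry out the boundary accounting and error control in full, or switch to the paper's device of bounding the regret of the suboptimal-but-explicit Krichevsky--Trofimov coder, which sidesteps the entire difficulty.
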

The last inequality is checked 
in the Appendix . 
\begin{rem}
The phenomenon pointed out in Theorem~\ref{prop:finitealph} 
 holds not only for the class of memoryless sources
over a finite alphabet but also for classes of sources that are  smoothly
parameterized by finite dimensional sets \citep[see again][]{barron:clarke:1990,barron:clarke:1994,barron:rissanen:yu:1998,xie:barron:1997,xie:barron:2000,MR2097043,catoni:2004}. 

%
\end{rem}

The minimax regret deserves further attention.
For a source class $\Lambda,$ for every $\mathbf{x}\in\mathcal{X}^n$, 
let the maximum likelihood $\hat{p}(\mathbf{x})$ be defined as~$\sup_{\PROB\in\Lambda} \PROB^n(\mathbf{x}).$
If $\sum_{\mathbf{x}\in\Np^n} \hat{p}(\mathbf{x}) <\infty,$ the \emph{Normalized
Maximum Likelihood} coding probability is well-defined and given by
$$
Q^n_{\nml}(\mathbf{x}) =  \frac{\hat{p}(\mathbf{x})}{\sum_{x\in\Np^n}  \hat{p}(\mathbf{x})}\, .
$$ 
\cite{shtarkov:1987} showed that the  \emph{Normalized Maximum Likelihood} coding probability achieves the same regret over
all strings of length $n$ and that this regret coincides with the \emph{minimax regret}: 
$$R^*(\Lambda^n)=\log \sum_{\mathbf{x}\in\Np^n} \hat{p}(\mathbf{x}).$$

%
%

Memoryless sources over finite  alphabets are special cases of
envelope classes. The latter will be of primary interest.
 
\begin{dfn}\label{dfn:envelope:class}
Let  $f$ be  a mapping from  $\Np$ to $[0,1].$ 
The envelope class $\Lambda_f$ defined by function $f$   is 
the collection of stationary  memoryless sources with first marginal
distribution dominated by $f$: 
\begin{displaymath}
\Lambda_f=\left\{ \PROB~:~~\forall x\in \mathbbm{N},\;\PROB^1\{x\}\leq
f(x)~, \text{ and } \PROB \text{ is stationary and memoryless.}
\right\}\, . 
\end{displaymath}
\end{dfn}

We will be  concerned with the following topics.
\begin{enumerate}
\item  Understanding general structural properties of minimax
  redundancy and minimax regret. 
\item Characterizing those source classes that have finite minimax
  regret. 
\item  Quantitative relations between minimax redundancy or  regret
  and integrability of the envelope function.
\item  Developing effective coding techniques for source classes with
  known non-trivial minimax redundancy rate. 
\item Developing adaptive coding schemes  for collections of 
source classes that are too large to enjoy even a weak redundancy rate.    
\end{enumerate}

The paper is organized as follows. Section~\ref{sec:props} describes some
structural properties of minimax redundancies and regrets for classes 
of stationary memoryless sources. Those properties include 
monotonicity and sub-additivity. Proposition~\ref{prop:finite} characterizes those source classes that
admit finite regret. This characterization emphasizes the role of
Shtarkov Normalized Maximum Likelihood coding probability. 
Proposition~\ref{prop:example} describes a simple source class for which
the minimax regret is infinite, while the minimax  redundancy is
finite.  Finally Proposition~\ref{prop:envfinite} asserts that such a
contrast is not possible for the so-called envelope classes. 

In Section~\ref{sec:envel:classes:generic},
Theorems~\ref{prop:upperbound} and~\ref{prop:desperate} provide quantitative relations
 between the summability properties of the envelope function
and minimax regrets and redundancies.   Those results build on the
non-asymptotic bounds on minimax redundancy derived
by \cite{xie:barron:1997}. 

Section~\ref{sec:ex} focuses on two kinds of envelope classes. This 
section serves as a benchmark for the two main results from the
preceding section. 
In Subsection~\ref{sec:power-law-envelope}, lower-bounds on minimax
redundancy and upper-bounds on minimax regret for classes defined by
envelope function  $k\mapsto 1\wedge C k^{-\alpha}$ are
described. Up to a factor $\log n$ those bounds are matching. 
In Subsection~\ref{sec:expon-envel-class}, 
lower-bounds on minimax
redundancy and upper-bounds on minimax regret for classes defined by
envelope function  $k\mapsto 1\wedge C \exp^{-\alpha k}$ are
described. Up to a multiplicative constant, those bounds coincide
and grow  like $\log^2 n.$

In Sections~\ref{sec:algo} and \ref{sec:an-adaptive-version}, we turn  to effective coding techniques  
geared toward source classes defined by power-law envelopes. In
Section~\ref{sec:algo}, we elaborate on the ideas
embodied in Proposition~\ref{prop:upperbound} from
Section~\ref{sec:props}, and combine mixture coding and Elias
penultimate code~\citep{MR0373753} to match the upper-bounds on
minimax redundancy described in Section~\ref{sec:ex}. One of the
messages from Section~\ref{sec:ex} is that the union of envelope
classes defined by power laws, does  not admit a weak redundancy rate
that grows at a rate slower than $n^{1/\beta}$ for any $\beta>1$. 
In Section~\ref{sec:an-adaptive-version}, we finally develop an adaptive
coding scheme for the union of envelope classes defined by power
laws. This adaptive coding scheme combines the censoring coding
technique developed in the preceding subsection and an estimation of
tail-heaviness. 
It shows that the union of envelope classes defined by power laws is
feebly universal. 

\section{Structural properties of the minimax redundancy and minimax regret}
\label{sec:props}
Propositions \ref{prop:folk:shtarkov},\ref{prop:increasing} and
\ref{prop:subadd}  below are sanity-check statements: they state that
when minimax redundancies and regrets are finite, as functions of
word-length, they are
non-decreasing and sub-additive. 
In order to prove them, we start by the following proposition which emphasizes the role of the ${\nml}$
coder with respect to the minimax regret. At best, it is a comment 
on Shtarkov's original work~\citep{shtarkov:1987,MR1604481}.  

\begin{prop}\label{prop:folk:shtarkov}
Let $\Lambda$ be a class of stationary memoryless sources over a countably
infinite alphabet, the minimax regret with respect to $\Lambda^n,$ $R^*(\Lambda^n)$ is finite if and
only if the normalized maximum likelihood (Shtarkov)
coding
probability $Q^n_{\nml}$ is well-defined and given by 
\begin{displaymath}
  Q^n_{\nml} (\mathbf{x}) =  \frac{\hat{p}(\mathbf{x})}{\sum_{\mathbf{y}\in
  \mathcal{X}^n} \hat{p}(\mathbf{y})}\text{ for } \mathbf{x} \in \mathcal{X}^n
\end{displaymath}
where $\hat{p}(\mathbf{x})=\sup_{\PROB \in \Lambda} \PROB^n(\mathbf{x}).$
\end{prop}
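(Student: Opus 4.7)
My plan is to give a direct two-directional argument, where the key identity is that the regret of any distribution $Q^n$ at a string $\mathbf{x}$ equals $\log(\hat{p}(\mathbf{x})/Q^n(\mathbf{x}))$, so that finiteness of $R^*(\Lambda^n)$ is equivalent to the existence of some subprobability $Q^n$ that dominates $\hat{p}$ pointwise up to a multiplicative constant.

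For the easy direction, I would assume $S := \sum_{\mathbf{y}\in\mathcal{X}^n}\hat{p}(\mathbf{y}) < \infty$. Then $Q^n_{\nml}$ is a bona fide probability on $\mathcal{X}^n$, and for every $\mathbf{x}$ one has $\log \hat{p}(\mathbf{x})/Q^n_{\nml}(\mathbf{x}) = \log S$, a constant. Taking a supremum over $\mathbf{x}$ and then over $\PROB \in \Lambda$ yields $R^*(\Lambda^n) \leq \log S < \infty$, so the minimax regret is finite.

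For the converse, assume $R^*(\Lambda^n) < \infty$. Given any $\epsilon > 0$, pick $Q^n \in \M_1(\mathcal{X}^n)$ achieving regret at most $R^*(\Lambda^n)+\epsilon$. By definition this means $\hat{p}(\mathbf{x}) \leq 2^{R^*(\Lambda^n)+\epsilon}\, Q^n(\mathbf{x})$ for every $\mathbf{x}$, so summing over $\mathbf{x} \in \mathcal{X}^n$ gives $S \leq 2^{R^*(\Lambda^n)+\epsilon} < \infty$. Thus the NML coder is well-defined. To identify the minimizer, I would run a Shtarkov-style contradiction: for any $Q^n \in \M_1(\mathcal{X}^n)$, if $\sup_{\mathbf{x}} \hat{p}(\mathbf{x})/Q^n(\mathbf{x}) < S$, then pointwise $\hat{p}(\mathbf{x}) < S\, Q^n(\mathbf{x})$ with strict inequality wherever $\hat{p}(\mathbf{x}) > 0$; summing would give $S < S$, which is absurd. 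Hence $\sup_{\mathbf{x}} \log \hat{p}(\mathbf{x})/Q^n(\mathbf{x}) \geq \log S$ for every admissible $Q^n$, matching the upper bound from the first direction and showing both that $R^*(\Lambda^n) = \log S$ and that $Q^n_{\nml}$ attains the minimax.

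I do not foresee a serious obstacle here: the only mild subtlety is allowing for the possibility that $\hat{p}(\mathbf{x}) = 0$ on part of $\mathcal{X}^n$ (then the strict inequality in the contradiction step has to be read as a non-strict inequality restricted to the support of $\hat{p}$, which does not affect the summation argument), and checking that the supremum defining $\hat{p}$ is measurable, which is automatic because the envelope class consists of memoryless laws indexed by their first marginals in $\M_1(\mathcal{X})$ and $\mathcal{X}^n$ is countable. The whole statement is essentially a rewriting of Shtarkov's identity $R^*(\Lambda^n) = \log \sum_{\mathbf{x}}\hat{p}(\mathbf{x})$ phrased as an "iff" for finiteness.
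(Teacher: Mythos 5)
Your proof is correct and follows essentially the same route as the paper: the direct direction rests on Shtarkov's constant-regret identity $\log\hat{p}(\mathbf{x})/Q^n_{\nml}(\mathbf{x}) = \log S$, and the converse sums the pointwise domination $\hat{p}(\mathbf{x}) \leq r\,Q^n(\mathbf{x})$ coming from a near-optimal $Q^n$. The only difference is that you spell out the Shtarkov optimality argument (the contradiction showing $R^*(\Lambda^n)=\log S$), whereas the paper simply cites Shtarkov for this.
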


Note that the definition of $Q^n_{\nml}$ does not assume either that the
maximum likelihood is achieved on $\Lambda$ or that it is uniquely defined.

\begin{proof}
The fact that if $Q^n_{\nml}$ is well-defined, 
the minimax regret is finite and equal to $$\log\left( \sum_{\mathbf{y}\in
  \mathcal{X}^n} \hat{p}(\mathbf{y})\right) $$  is the fundamental
  observation of \cite{shtarkov:1987}. 

On the other hand, if $R^*(\Lambda^n)<\infty,$ there exists a
probability distribution $Q^n$ on $\mathcal{X}^n$ and a finite number
$r$ such that for all $\mathbf{x}\in \mathcal{X}^n,$ 
$$
\hat{p}(\mathbf{x}) \leq r \times Q^n(\mathbf{x})\, , 
$$
summing  over $\mathbf{x}$ gives 
$$
\sum_{\mathbf{x}\in
  \mathcal{X}^n} \hat{p}(\mathbf{x})\leq r < \infty \, . 
$$
\end{proof}

\begin{prop}
\label{prop:increasing}
Let $\Lambda$ denote a class of sources, then 
the minimax redundancy 
$R^+(\Lambda^n)$ and the minimax regret $R^*(\Lambda^n)$ are
  non-decreasing functions of $n$. 
\end{prop}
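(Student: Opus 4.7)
The plan is to relate coding distributions on $\mathcal{X}^{n+1}$ to coding distributions on $\mathcal{X}^n$ via marginalization, and then exploit the fact that the projection $Q^{n+1} \mapsto Q^n$ (its first-$n$ marginal) is surjective onto $\mathfrak{M}_1(\mathcal{X}^n)$: indeed, any $Q^n$ is the marginal of $Q^n \otimes \mu$ for any $\mu \in \mathfrak{M}_1(\mathcal{X})$. The sources in $\Lambda$ are assumed to be stationary, so $P^n$ is automatically the first-$n$ marginal of $P^{n+1}$, which is what makes the comparison between redundancies at different sample sizes meaningful.

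For the minimax redundancy, I would argue as follows. Fix any $Q^{n+1} \in \mathfrak{M}_1(\mathcal{X}^{n+1})$ with marginal $Q^n$. The chain rule for Kullback-Leibler divergence gives
\begin{equation*}
D(P^{n+1}, Q^{n+1}) = D(P^n, Q^n) + \mathbb{E}_{P^n}\bigl[D(P^{n+1}(\cdot \mid X_{1:n}), Q^{n+1}(\cdot \mid X_{1:n}))\bigr] \geq D(P^n, Q^n),
\end{equation*}
so taking $\sup_{P \in \Lambda}$ yields $R^+(Q^{n+1}, \Lambda^{n+1}) \geq R^+(Q^n, \Lambda^n) \geq R^+(\Lambda^n)$. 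Taking $\inf$ over $Q^{n+1}$ gives the desired monotonicity of $R^+$.

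For the minimax regret, the analogous tool is the elementary inequality $(a_1+a_2)/(b_1+b_2) \leq \max(a_1/b_1,a_2/b_2)$ for nonnegative numbers. Writing $P^n(x_{1:n}) = \sum_{x_{n+1}} P^{n+1}(x_{1:n},x_{n+1})$ and $Q^n(x_{1:n}) = \sum_{x_{n+1}} Q^{n+1}(x_{1:n},x_{n+1})$, this gives
\begin{equation*}
\sup_{P \in \Lambda} \frac{P^n(x_{1:n})}{Q^n(x_{1:n})} \leq \sup_{P \in \Lambda} \max_{x_{n+1}\in\mathcal{X}} \frac{P^{n+1}(x_{1:n},x_{n+1})}{Q^{n+1}(x_{1:n},x_{n+1})} \leq \max_{x_{n+1}\in\mathcal{X}} \sup_{P \in \Lambda} \frac{P^{n+1}(x_{1:n},x_{n+1})}{Q^{n+1}(x_{1:n},x_{n+1})}.
\end{equation*}
Taking $\log$ and maximizing over $x_{1:n}$ shows $R^*(Q^n, \Lambda^n) \leq R^*(Q^{n+1}, \Lambda^{n+1})$, and infimizing over $Q^{n+1}$ (using surjectivity of marginalization) yields $R^*(\Lambda^n) \leq R^*(\Lambda^{n+1})$.

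There is no real obstacle: once one observes that marginalization is a surjection from $\mathfrak{M}_1(\mathcal{X}^{n+1})$ onto $\mathfrak{M}_1(\mathcal{X}^n)$ and that $P$ stationary forces $P^n$ to be the marginal of $P^{n+1}$, both statements reduce to one-line pointwise inequalities (chain rule for redundancy, $(a_1+a_2)/(b_1+b_2)\leq\max_i a_i/b_i$ for regret). The only mild subtlety is remembering to take $\sup_P$ \emph{after} the pointwise bound in the regret case, which is valid since a maximum of suprema equals the supremum of the maximum.
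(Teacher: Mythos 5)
Your proof is correct, but it takes a genuinely different route from the paper's on both halves. For $R^+$, the paper invokes Theorem~\ref{thm:sion} to reduce to the maximin side and then argues that the mutual information $I(\theta;X_{1:n})$ is non-decreasing in $n$; you instead argue directly on the minimax side via the chain rule $D(P^{n+1},Q^{n+1}) = D(P^n,Q^n) + \E_{P^n}\bigl[D(P^{n+1}(\cdot\mid X_{1:n}),Q^{n+1}(\cdot\mid X_{1:n}))\bigr] \geq D(P^n,Q^n)$, which avoids any appeal to minimax duality and is therefore more self-contained. For $R^*$, the paper first invokes Proposition~\ref{prop:folk:shtarkov} to identify $R^*(\Lambda^n)$ with $\log\sum_{\mathbf{x}}\hat p(\mathbf{x})$ and then shows this sum is non-decreasing by picking near-maximizers $P_{\mathbf{x}}$ and summing over the last coordinate; you instead apply the mediant inequality $\frac{\sum_i a_i}{\sum_i b_i}\leq\max_i \frac{a_i}{b_i}$ pointwise to the ratio $P^n/Q^n$, which gives $R^*(Q^n,\Lambda^n)\leq R^*(Q^{n+1},\Lambda^{n+1})$ for the marginal $Q^n$ of any $Q^{n+1}$, without ever passing to the Shtarkov normalization. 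What your approach buys is that it never leaves the raw definitions of $R^+$ and $R^*$ as infima over coding distributions; what the paper's approach buys is a channel-capacity interpretation for $R^+$ and a form that drops out immediately from the $\nml$ machinery it has already set up. One small inaccuracy: you attribute the fact that $P^n$ is the first-$n$ marginal of $P^{n+1}$ to stationarity, but this is just Kolmogorov consistency of the process $P$ and holds for any source; neither the statement nor either proof requires stationarity. Also, for $R^+$ you do not actually need surjectivity of the marginalization map: you already have $R^+(Q^n,\Lambda^n)\geq R^+(\Lambda^n)$ for the marginal $Q^n$ of every $Q^{n+1}$, so taking the infimum over $Q^{n+1}$ alone suffices.
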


\begin{proof}
As far as $R^+$ is concerned, by Theorem~\ref{thm:sion},  it is enough
to check that the maximin (mutual information) lower bound  is non-decreasing. 

For any prior distribution $W$ on a parameter set $\Theta$ (recall
that 
$\{P_\theta~:~\theta\in \Theta\}\subseteq \Lambda,$ and that the mixture
coding probability $Q^n$  is defined by $Q^n(A)=\EXP_W[\PROB^n_\theta(A)]$) 
$$\EXP_W\left[ D(\PROB^{n+1}_{\theta}, Q^{n+1})\right]=
I(\theta; X_{1:n+1}) = I\left(\theta; \left(X_{1:n}, X_{n+1}\right)\right) \geq I(\theta; X_{1:n})
=\EXP_W\left[ D(\PROB^n_\theta, Q^n)\right].$$

Let us now consider the minimax regret. It is enough to consider the
 case where $R^*(\Lambda^n)$ is finite. Thus we may  rely on
 Proposition~\ref{prop:folk:shtarkov}. 
Let $n$ and $m$ be two positive integers. Let $\epsilon$ be a small 
positive real.
For any string $\mathbf{x}\in {\mathcal{X}}^{n},$
let $P_{\mathbf{x}} \in \Lambda,$ be such that $P_{\mathbf{x}}\{\mathbf{x}\}\geq 
\hat{p}(\mathbf{x})(1-\epsilon).$ Then 
\begin{eqnarray*}
  \hat{p} (\mathbf{x}x') &\geq &P_{\mathbf{x}}(\mathbf{x}) \times P_{\mathbf{x}}(x'\mid \mathbf{x}) 
\\
&\geq &\hat{p}(\mathbf{x})(1-\epsilon)  \times P_{\mathbf{x}}(x'\mid \mathbf{x}) \, .  
\end{eqnarray*}
Summing over all possible $x'\in {\cal X}$ we get 
\begin{displaymath}
\sum_{x'}  \hat{p} (\mathbf{x}x') \geq
\hat{p}(\mathbf{x})(1-\epsilon) \, . 
\end{displaymath}
Summing now over all $\mathbf{x}\in {\cal X}^n \text{ and } x'\in {\cal X},$ 
\begin{displaymath}
  \sum_{\mathbf{x}\in {\cal X}^n,x'\in {\cal X}}  \hat{p} (\mathbf{x}x') \geq
\sum_{\mathbf{x}\in {\cal X}^n}\hat{p}(\mathbf{x})(1-\epsilon) \, .
\end{displaymath}
So that  by letting $\epsilon$ tend to $0,$
\begin{displaymath}
  \sum_{\mathbf{x}\in {\cal X}^{n+1}} \hat{p}(\mathbf{x}) \geq
  \sum_{\mathbf{x}\in {\cal X}^n} \hat{p} (\mathbf{x}) \, . 
\end{displaymath}
\end{proof}
Note that the proposition holds even though $\Lambda$ is not a
collection of memoryless sources. 
This Proposition can be easily completed when dealing with
memoryless sources. 
\begin{prop}
\label{prop:subadd}
If $\Lambda$ is a class of stationary memoryless sources,  then 
the functions $n\mapsto R^+(\Lambda^n)$ and $n\mapsto R^*(\Lambda^n)$
are either infinite or
sub-additive.
\end{prop}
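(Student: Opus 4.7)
The plan is to exploit the product structure of memoryless sources. If either $R(\Lambda^n)$ or $R(\Lambda^m)$ is infinite (where $R$ stands for $R^+$ or $R^*$) the sub-additivity inequality $R(\Lambda^{n+m})\leq R(\Lambda^n)+R(\Lambda^m)$ is trivial, so the real content lies in the finite case. In that case I would build a coding distribution on $\mathcal{X}^{n+m}$ as a product of near-optimal coders on $\mathcal{X}^n$ and $\mathcal{X}^m$, and then read off the desired inequality from the fact that every $P\in\Lambda$ factorizes multiplicatively across disjoint blocks of coordinates.

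For $R^+$, I would fix $\varepsilon>0$ and pick coding distributions $Q_1^n\in\mathfrak{M}_1(\mathcal{X}^n)$, $Q_2^m\in\mathfrak{M}_1(\mathcal{X}^m)$ with $R^+(Q_1^n,\Lambda^n)\leq R^+(\Lambda^n)+\varepsilon$ and similarly for $Q_2^m$. Setting $Q^{n+m}(\mathbf{x}_{1:n+m})=Q_1^n(\mathbf{x}_{1:n})\,Q_2^m(\mathbf{x}_{n+1:n+m})$ and using $P^{n+m}=P^n\otimes P^m$, the log-likelihood ratio telescopes and
\[
D(P^{n+m},Q^{n+m})=D(P^n,Q_1^n)+D(P^m,Q_2^m).
\]
Taking the supremum over $P\in\Lambda$ on both sides, then taking the infimum over the choice of $Q_1^n,Q_2^m$, and finally letting $\varepsilon\downarrow 0$, yields $R^+(\Lambda^{n+m})\leq R^+(\Lambda^n)+R^+(\Lambda^m)$.

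For $R^*$, I would use Proposition~\ref{prop:folk:shtarkov}: if both regrets are finite, the NML coders $Q^n_{\nml}$ and $Q^m_{\nml}$ are well defined and achieve regret $R^*(\Lambda^n)$, $R^*(\Lambda^m)$ uniformly over strings. The key observation is the sub-multiplicativity of the maximum likelihood: for any $\mathbf{x}\in\mathcal{X}^n$ and $\mathbf{y}\in\mathcal{X}^m$,
\[
\hat{p}(\mathbf{x}\mathbf{y})=\sup_{P\in\Lambda}P^n(\mathbf{x})P^m(\mathbf{y})\leq \sup_{P\in\Lambda}P^n(\mathbf{x})\cdot \sup_{P\in\Lambda}P^m(\mathbf{y})=\hat{p}(\mathbf{x})\hat{p}(\mathbf{y}).
\]
Using the product $Q_1^n\otimes Q_2^m$ as a (suboptimal) coder on $\mathcal{X}^{n+m}$ and bounding pointwise,
\[
\log\frac{\hat{p}(\mathbf{x}\mathbf{y})}{Q_1^n(\mathbf{x})Q_2^m(\mathbf{y})}\leq \log\frac{\hat{p}(\mathbf{x})}{Q_1^n(\mathbf{x})}+\log\frac{\hat{p}(\mathbf{y})}{Q_2^m(\mathbf{y})},
\]
so maximizing each side over the string and then taking the infimum over coders (or specializing to the NML choices) gives $R^*(\Lambda^{n+m})\leq R^*(\Lambda^n)+R^*(\Lambda^m)$.

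The one step that requires a small amount of care is the sub-multiplicativity of $\hat{p}$: unlike the redundancy case, where memorylessness gives an exact identity for the divergence of a product, here the inequality goes only one way because the supremum defining $\hat p$ is taken \emph{after} the product is formed, and different source parameters may be optimal for $\mathbf{x}$ and for $\mathbf{y}$. This is exactly what makes the NML product sub-optimal and what produces the sub-additive (rather than additive) bound; everything else is a routine manipulation of Kullback--Leibler divergences and suprema.
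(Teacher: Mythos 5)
Your proof is correct, and for the $R^+$ half it takes a genuinely different (and arguably more elementary) route than the paper. The paper invokes Theorem~\ref{thm:sion} to reduce sub-additivity of $R^+$ to sub-additivity of the maximin (mutual information) lower bound, and then proves $I(X_{1:n+m};\theta) \leq I(X_{1:n};\theta)+I(X_{1:m};\theta)$ via conditional mutual information, conditional independence of the two blocks given $\theta$, and stationarity. You instead work directly on the minimax side: take near-optimal coders $Q_1^n,Q_2^m$, observe that $D(P^{n+m},Q_1^n\otimes Q_2^m)=D(P^n,Q_1^n)+D(P^m,Q_2^m)$ exactly because $P^{n+m}=P^n\otimes P^m$, and take sup over $P$ then $\varepsilon\downarrow 0$. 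Your argument buys independence from the minimax$=$maximin theorem and is a cleaner ``combine two codes'' construction; the paper's argument is natural given that Theorem~\ref{thm:sion} has already been stated and will be used repeatedly, and it reuses the same Bayesian machinery as the monotonicity proof (Proposition~\ref{prop:increasing}). For the $R^*$ half, your proof and the paper's rest on the same observation --- sub-multiplicativity of $\hat p$ across independent blocks --- and differ only cosmetically (you phrase it as a pointwise regret bound under the product NML coder; the paper sums $\hat p(\mathbf{x}\mathbf{y})\leq\hat p(\mathbf{x})\hat p(\mathbf{y})$ directly into Shtarkov's formula). One small redundancy in your write-up: once you have chosen $\varepsilon$-near-optimal $Q_1^n,Q_2^m$, the subsequent ``infimum over the choice of $Q_1^n,Q_2^m$'' step is not needed; the $\varepsilon\downarrow 0$ limit already closes the argument.
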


\begin{proof} Assume that $R^+(\Lambda^n)<\infty$. 
Here again, given Theorem~\ref{thm:sion}, in order to
  establish sub-additivity for $R^+,$ it is enough
  to check the property for the maximin lower bound.  Let $n, m$ be
  two positive integers, and $W$ be any prior on $\Theta$ 
  (with $\{P_\theta~:~\theta\in \Theta\}\subseteq\Lambda$). 
As sources from $\Lambda$ are memoryless, $X_{1:n}$ and $X_{n+1:n+m}$ are independent conditionally on $\theta$ and thus 
\begin{eqnarray*}
\lefteqn{I\left(X_{n+1:n+m}; \theta |X_{1:n}\right)}\\
 & = & H\left(X_{n+1:n+m}|X_{1:n}\right)- H\left(X_{n+1:n+m}|X_{1:n}, \theta\right)\\
& = & H\left(X_{n+1:n+m}|X_{1:n}\right)- H\left(X_{n+1:n+m}| \theta\right)\\
& \leq & H\left(X_{n+1:n+m}\right)- H\left(X_{n+1:n+m}| \theta\right)\\
& = & I\left(X_{n+1:n+m}; \theta \right)\, .
\end{eqnarray*}
Hence, using the fact that under each $\PROB_\theta,$ the process  $(X_n)_{n\in\Np}$ is stationary:
\begin{eqnarray*}
I\left(X_{1:n+m}; \theta\right) & = & I\left(X_{1:n}; \theta\right) +
I\left(X_{n+1:n+m}; \theta |X_{1:n}\right)\\
& \leq & I\left(X_{1:n}; \theta\right) + I\left(X_{n+1:n+m};\theta
\right)\\
& = & I\left(X_{1:n}; \theta\right) + I\left(X_{1:m}; \theta
\right).
\end{eqnarray*}
Let us now check the sub-additivity of the minimax regret. Suppose that $R^*(\Lambda^1)$ is finite.
For any $\epsilon>0,$
for $\mathbf{x}\in {\cal X}^{n+m},$
let $\PROB \in \Lambda$ be such that 
$(1-\epsilon) \hat{p}(\mathbf{x})\leq \PROB^{n+m}(\mathbf{x}).$
As for $\mathbf{x}\in {\cal X}^n$ and $\mathbf{x}'\in {\cal X}^m,$ $\PROB^{n+m}(\mathbf{x}\mathbf{x}')=
\PROB^n(\mathbf{x})\times \PROB^m(\mathbf{x}'), $
 we have for any $\epsilon>0,$ and any $\mathbf{x}\in {\cal
   X}^n,\mathbf{x}'\in {\cal X}^m$
\begin{displaymath}
  (1-\epsilon) \hat{p}(\mathbf{x}\mathbf{x}') \leq \hat{p}(\mathbf{x})\times
  \hat{p}(\mathbf{x}')\, .
\end{displaymath}
Hence, letting  $\epsilon$ tend to $0,$ and summing over all
$\mathbf{x}\in {\cal X}^{n+m}$: 
\begin{eqnarray*} 
\lefteqn{R^* \left(\Lambda^{n+m}\right)}\\
 & = & \log \sum_{\mathbf{x}\in {\mathcal{X}}^{n}\mathbf{x}'\in {\mathcal{X}}^{m}}
\hat{p}\left(\mathbf{x}\mathbf{x}'\right) \\
 & \leq &  \log \sum_{\mathbf{x}\in {\mathcal{X}}^{n}}
\hat{p}(\mathbf{x}) + \log \sum_{\mathbf{x}\in {\mathcal{X}}^{m}}
\hat{p}\left(\mathbf{x}'\right)\\
& = & R^*\left(\Lambda^{n}\right) +R^*\left(\Lambda^{m}\right).
\end{eqnarray*}
\end{proof}
\begin{rem}
Counter-examples  witness the fact that subadditivity of redundancies does not hold 
in full generality. 
\end{rem}
The Fekete Lemma
\citep[see][]{dembo:zeitouni:1998} leads to:
\begin{cor} \label{cor:fekete}
Let $\Lambda$ denote a class of stationary memoryless sources over a countable alphabet.
 For both minimax redundancy $R^+$ and minimax regret $R^*$, 
$$\lim_{n\to\infty}\frac{R^+\left(\Lambda^n\right)}{n} =
\inf_{n\in\Np}\frac{R^+\left(\Lambda^n\right)}{n} \leq
R^+\left(\Lambda^1\right)\, , $$
and 
$$\lim_{n\to\infty}\frac{R^* \left(\Lambda^n\right)}{n} =
\inf_{n\in\Np}\frac{R^* \left(\Lambda^n\right)}{n} \leq
R^* \left(\Lambda^1\right)\, . $$
\end{cor}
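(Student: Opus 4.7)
The plan is to apply Fekete's subadditive Lemma to the sequences $\bigl(R^+(\Lambda^n)\bigr)_n$ and $\bigl(R^*(\Lambda^n)\bigr)_n$, using Proposition~\ref{prop:subadd} as the crucial input. Since $\Lambda$ is a class of stationary memoryless sources, that proposition gives us precisely the subadditivity relations $R^+(\Lambda^{n+m}) \leq R^+(\Lambda^n) + R^+(\Lambda^m)$ and $R^*(\Lambda^{n+m}) \leq R^*(\Lambda^n) + R^*(\Lambda^m)$, provided the relevant quantities are finite.

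First I would dispose of the degenerate case. If $R^+(\Lambda^1) = +\infty$ (respectively $R^*(\Lambda^1) = +\infty$), then by Proposition~\ref{prop:increasing} the sequence is $+\infty$ for every $n \geq 1$, so both $\lim_n R^+(\Lambda^n)/n$ and $\inf_n R^+(\Lambda^n)/n$ equal $+\infty$, and the stated inequality with $R^+(\Lambda^1)$ is trivially an equality in the extended reals. A symmetric argument covers $R^*$. Hence we may assume $R^+(\Lambda^1)<\infty$ (resp.\ $R^*(\Lambda^1)<\infty$); by subadditivity, $R^+(\Lambda^n) \leq n\, R^+(\Lambda^1) < \infty$ for every $n$, so all terms are finite.

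Next I would invoke Fekete's Lemma in its standard form: for any sequence $(a_n)_{n\in\Np}$ of nonnegative reals satisfying $a_{n+m} \leq a_n + a_m$, one has $\lim_{n\to\infty} a_n/n = \inf_{n\in\Np} a_n/n$. Applied to $a_n = R^+(\Lambda^n)$, this yields the first displayed equality; the final inequality $\inf_{n} R^+(\Lambda^n)/n \leq R^+(\Lambda^1)$ is immediate, since the infimum is bounded above by the value at $n=1$. The identical argument with $a_n = R^*(\Lambda^n)$, whose subadditivity and finiteness are guaranteed by Proposition~\ref{prop:subadd} together with Proposition~\ref{prop:folk:shtarkov}, gives the second pair of identities.

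There is no real obstacle here: once Proposition~\ref{prop:subadd} is in hand, the corollary is a one-line invocation of Fekete's Lemma. The only delicate point worth being explicit about is that one must first rule out the possibility that the sequences take the value $+\infty$ at some intermediate index, which is handled by Proposition~\ref{prop:increasing} (so finiteness at $n=1$ propagates to all $n$ via subadditivity, and infiniteness at $n=1$ makes the statement trivial).
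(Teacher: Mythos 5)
Your proof is correct and follows the same route the paper intends: invoke Fekete's subadditive lemma on the sequences $\bigl(R^+(\Lambda^n)\bigr)_n$ and $\bigl(R^*(\Lambda^n)\bigr)_n$, with Proposition~\ref{prop:subadd} supplying subadditivity. Your explicit handling of the degenerate infinite case via Proposition~\ref{prop:increasing} and the observation that finiteness at $n=1$ propagates by subadditivity are both sound (and a bit more careful than the paper's one-line citation of the Fekete lemma).
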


\mbox{}\vspace{1.cm}\\

Hence, in order to prove that $R^+\left(\Lambda^n\right)<\infty$
(respectively $R^*\left(\Lambda^n\right)<\infty$),
it is enough to check that $R^+\left(\Lambda^1\right)<\infty$
(respectively $R^*\left(\Lambda^1\right)<\infty$).

The following Proposition combines
Propositions~\ref{prop:folk:shtarkov},~\ref{prop:increasing}
and~\ref{prop:subadd}.
It can be rephrased as follows: a class of memoryless sources admits
a non-trivial strong minimax regret if and only if Shtarkov ${\nml}$  coding 
probability is well-defined for $n=1.$

\begin{prop}
\label{prop:finite}
Let $\Lambda$ be a class of stationary memoryless sources over a countably
infinite alphabet. Let $\hat{p}$ be defined by
$\hat{p}(x)=\sup_{\PROB\in \Lambda}\PROB\{x\}.$ The minimax regret with respect to $\Lambda^n$ is finite if and
only if the normalized maximum likelihood (Shtarkov)
coding
probability is well-defined and :  
 $$R^*(\Lambda^n)<\infty 
\equivaut \sum_{x\in\Np} \hat{p}(x) <\infty.$$ 
\end{prop}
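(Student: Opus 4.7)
The plan is to use Proposition~\ref{prop:folk:shtarkov} applied at the marginal level (i.e.\ at $n=1$) together with the monotonicity (Proposition~\ref{prop:increasing}) and sub-additivity (Corollary~\ref{cor:fekete}) of $R^*(\Lambda^n)$ to reduce the statement for arbitrary $n$ to the statement for $n=1$, at which point $\hat{p}(\mathbf{x})$ for $\mathbf{x}\in\mathcal{X}^n$ collapses to $\hat{p}(x)$ for $x\in\Np$, which is exactly the quantity appearing in the summability condition.

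For the easy direction, I would argue as follows. Assume $\sum_{x\in\Np}\hat{p}(x)<\infty$. Applying Proposition~\ref{prop:folk:shtarkov} with $n=1$ shows that the Shtarkov \textsc{nml} distribution is well-defined on $\mathcal{X}$ and that $R^*(\Lambda^1)=\log\sum_{x\in\Np}\hat{p}(x)<\infty$. Since $\Lambda$ consists of stationary memoryless sources, Corollary~\ref{cor:fekete} gives $R^*(\Lambda^n)\leq n\,R^*(\Lambda^1)<\infty$ for every $n$, and a second application of Proposition~\ref{prop:folk:shtarkov} (now on $\mathcal{X}^n$) ensures that $Q_{\nml}^n$ is well-defined.

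For the converse, suppose $R^*(\Lambda^n)<\infty$ for some $n\geq 1$. Proposition~\ref{prop:increasing} yields $R^*(\Lambda^1)\leq R^*(\Lambda^n)<\infty$, and then Proposition~\ref{prop:folk:shtarkov} at $n=1$ forces $\sum_{x\in\Np}\hat{p}(x)<\infty$, which is precisely the summability we needed. Combining the two directions establishes the equivalence.

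There is really no serious obstacle here: the proposition is essentially a repackaging of the three structural lemmas already proved. The only point that deserves explicit mention is why the memoryless assumption is used --- it is precisely what makes sub-additivity available via Proposition~\ref{prop:subadd} and hence Corollary~\ref{cor:fekete}, so that finite single-letter regret lifts to finite block regret for every $n$. Without this assumption the implication $R^*(\Lambda^1)<\infty\Rightarrow R^*(\Lambda^n)<\infty$ could fail.
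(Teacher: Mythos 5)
Your proof is correct and follows essentially the same route as the paper: apply Proposition~\ref{prop:folk:shtarkov} at $n=1$ to identify $R^*(\Lambda^1)$ with $\log\sum_{x}\hat p(x)$, then use monotonicity (Proposition~\ref{prop:increasing}) for the direction $R^*(\Lambda^n)<\infty\Rightarrow R^*(\Lambda^1)<\infty$ and sub-additivity (Proposition~\ref{prop:subadd}, packaged via Corollary~\ref{cor:fekete}) for the lift from $n=1$ to general $n$. The only cosmetic remark is that the pointwise bound $R^*(\Lambda^n)\le nR^*(\Lambda^1)$ comes directly from the sub-additivity of Proposition~\ref{prop:subadd} rather than from the limiting statement in Corollary~\ref{cor:fekete}, but the paper itself invokes the corollary loosely in the same way, so the citation is in keeping with the source.
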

\mbox{}\vspace{1cm}\\

\begin{proof}
The direct part follows from Proposition~\ref{prop:folk:shtarkov}.

For the converse  part,
if $\sum_{x\in\Np} \hat{p} (x) = \infty$, then 
$R^*(\Lambda^1)=\infty$ and from Proposition \ref{prop:increasing},
$R^*(\Lambda^n)=\infty$ for every positive integer  $n$.
\end{proof}

When dealing with smoothly parameterized classes of  sources over
finite alphabets
\citep[see][]{barron:rissanen:yu:1998,xie:barron:2000} or even with 
the massive classes defined by renewal sources 
\citep{csiszar:shields:1996}, the minimax  regret and minimax
redundancy are usually of the same order of magnitude (see
Theorem~\ref{prop:finitealph} and comments in the Introduction).
This can not be taken for granted when 
dealing with classes of stationary memoryless sources over a
countable alphabet. 
\begin{prop}
\label{prop:example}
Let $f$ be a positive, strictly decreasing function defined on $\mathbbm{N}$ such that $f(1)<1$.
For $k\in\mathbbm{N}$, let $p_k$ be the probability mass function on $\mathbbm{N}$ defined by:
$$p_k(l) = \left\{\begin{array}{ccl}
{1-f(k)} & \text{if} & l=0 ; \\
f(k) & \text{if} & l=k;\\
0 &  & \text{otherwise.}
\end{array}\right.$$
Let $\Lambda^1  = \{p_1, p_2, \ldots\},$ let $\Lambda$ be the class of
stationary memoryless sources with first marginal $\Lambda^1.$ The
finiteness of the 
minimax redundancy with respect to $\Lambda^n$ depends on the
limiting behavior of $f(k)\log k$:
for every positive integer $n$: 
 $$  f(k)\log k \to_{k\to\infty} \infty \Leftrightarrow
 R^+\left(\Lambda^n\right)=\infty\, .$$
\end{prop}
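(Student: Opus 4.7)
The plan is to reduce to $n=1$ using the structural results already established, then prove each implication by a Bayes lower bound on one side and an explicit mixture coder on the other.

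First, Propositions~\ref{prop:increasing} and~\ref{prop:subadd} give $R^+(\Lambda^1)\le R^+(\Lambda^n)\le n\,R^+(\Lambda^1)$, so it suffices to establish the equivalence $R^+(\Lambda^1)=\infty\Leftrightarrow f(k)\log k\to\infty$. For the direction $f(k)\log k\to\infty\Rightarrow R^+(\Lambda^1)=\infty$, I would apply Theorem~\ref{thm:sion} and bound $R^+(\Lambda^1)$ from below by the Bayes redundancy of the uniform prior $W_N$ on $\{N,\ldots,2N\}$. The mixture coder $Q_{W_N}=\sum_k W_N(k)p_k$ puts mass $f(k)/(N+1)$ on each $k$ in the support and mass $Q_{W_N}(0)=1-O(f(N))$ on $0$, so a direct expansion gives
$$
D(p_k,Q_{W_N})= f(k)\log(N+1) + (1-f(k))\log\frac{1-f(k)}{Q_{W_N}(0)},
$$
with the second term of order $f(N)$ uniformly in $k\in\{N,\ldots,2N\}$. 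Averaging under $W_N$ and using the monotonicity of $f$ to lower-bound $\frac{1}{N+1}\sum_{k=N}^{2N}f(k)\ge f(2N)$ yields
$$
R^+(\Lambda^1)\ge \mathbb{E}_{W_N}[D(p_k,Q_{W_N})] \ge f(2N)\log(N+1) - O(f(N)),
$$
which diverges as $N\to\infty$ under the hypothesis.

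For the converse, I would prove the contrapositive: assuming $f(k)\log k\le M$ for all $k$, I exhibit a coder achieving bounded maximal redundancy. A natural choice is $Q(0)=1/2$ and $Q(k)=c/k^2$ for $k\ge1$ with $c=3/\pi^2$. Expanding $D(p_k,Q)$ and using $f(k)\le 1$,
$$
D(p_k,Q) = (1-f(k))\log\bigl(2(1-f(k))\bigr) + f(k)\log\frac{f(k)k^2}{c} \le 1 + 2f(k)\log k + \log(1/c) \le 1 + 2M + \log(1/c),
$$
uniformly in $k$. Hence $R^+(\Lambda^1)\le \sup_k D(p_k,Q)<\infty$.

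The main technical point is the choice of prior in the lower bound: the window $\{N,\ldots,2N\}$ is just wide enough that monotonicity of $f$ makes $f$ essentially constant on it (hence the lower bound $f(2N)$ on the average), while narrow enough that $\log(N+1)$ captures the codeword-length factor and couples with $f(2N)$ to recover the envelope quantity $f(k)\log k$. A secondary subtlety in the converse is that ``$f(k)\log k\not\to\infty$'' is a priori weaker than ``$f(k)\log k$ bounded''; here the strict monotonicity of $f$ is used to promote a uniform bound along a subsequence $\{k_j\}$ to all indices between successive $k_j$'s, so that the two negations really coincide and the explicit coder above can be applied.
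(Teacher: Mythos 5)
Your two main implications are both correctly argued, and the strategy mirrors the paper's: reduce to $n=1$ via Propositions~\ref{prop:increasing} and~\ref{prop:subadd}, lower-bound $R^+(\Lambda^1)$ via a Bayes/mutual-information argument with a uniform prior on a finite window, and upper-bound it by exhibiting a single coding distribution with a sufficiently heavy tail. The differences are minor: the paper places the uniform prior on the initial segment $\{1,\ldots,m\}$ and uses a Fano-style bound $H(\theta\mid X)\le 1+(1-f(m))\log m$, whereas you use the window $\{N,\ldots,2N\}$ and expand the mixture divergence directly; the paper uses the Elias-type tail $Q(k)\propto 1/(k(\log k)^2)$, whereas you use $Q(k)\propto 1/k^2$. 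All four choices are adequate under the stated hypotheses.

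However, the claim in your last paragraph is wrong: strict monotonicity of $f$ does \emph{not} make ``$f(k)\log k\not\to\infty$'' equivalent to ``$f(k)\log k$ bounded.'' For $k_j<k<k_{j+1}$, monotonicity only yields $f(k)\log k<f(k_j)\log k_{j+1}$, and nothing controls the ratio $\log k_{j+1}/\log k_j$. One can build a strictly decreasing $f$ that is nearly constant over increasingly long blocks $[k_j,k_{j+1})$, with $f(k_j)\log k_j=1$ but $\log k_{j+1}\approx j\log k_j$; then $f(k)\log k$ oscillates between roughly $1$ and roughly $j$, so it is unbounded without tending to $\infty$. The gap between the two negations is therefore real, and your proposed promotion does not close it. What you --- and the paper --- actually prove is the pair ``$f(k)\log k\to\infty\Rightarrow R^+(\Lambda^n)=\infty$'' and ``$\sup_k f(k)\log k<\infty\Rightarrow R^+(\Lambda^n)<\infty$.'' Since your lower-bound argument (like the paper's) in fact yields $R^+(\Lambda^1)\geq f(m)\log m - O(1)$ for \emph{each} $m$, the two implications combine to the corrected equivalence $R^+(\Lambda^n)=\infty\Leftrightarrow\sup_k f(k)\log k=\infty$. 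So the ``$\to\infty$'' in the statement should really be read as ``is unbounded''; the paper's own proof contains exactly the same imprecision, so this is not a defect peculiar to your argument, but the monotonicity-based repair you suggest is not a valid way to resolve it.
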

\vspace{.5cm}
\begin{rem}
When $f(k)=\frac{1}{\log k}$, the minimax  redundancy
$R^+(\Lambda^n)$ is finite for all $n.$ 
Note, however that this does  not warrant the existence of a
non-trivial strong universal redundancy rate. 
However, as $\sum_k f(k) = \infty$, minimax  regret is
infinite by Proposition~\ref{prop:finite}. 

A similar result appears in the discussion of Theorem 3 in
\citep{MR1604481} where classes with finite minimax redundancy and
infinite minimax regret are called irregular. 

We will be able to refine those observations after the statement of
Corollary~\ref{cor:subadd:regret}.  

\end{rem}
\begin{proof}
Let us first prove the direct part. Assume  that $f(k)\log k
\to_{k\to\infty} \infty$. 
In order to check that $R^+(\Lambda^1)=\infty,$  we resort to the
mutual information lower bound (Theorem~\ref{thm:sion}) 
and describe an appropriate  collection of
Bayesian games. 

Let $m$ be a positive integer and let $\theta$ be   uniformly
distributed
over $\{1,2,\ldots,m\}$.  Let $X$  be distributed according to
$p_k$ conditionally on $\theta=k.$
Let $Z$ be the random  variable equal to $1$ if $X=\theta$ and equal to $0$
otherwise.
Obviously, $H(\theta|X, Z=1)=0$; moreover, as $f$ is assumed to be
non-increasing, $\PROB(Z=0|\theta=k) = 1-f(k)\leq 1-f(m)$ and thus: 
\begin{eqnarray*}
H(\theta|X) & = & H(Z|X) + H(\theta|Z,X)\\
 & \leq & 1 + \PROB(Z=0)H(\theta|X, Z=0)\\
&& + \PROB(Z=1)H(\theta|X, Z=1)\\
 & \leq & 1 +  \left(1-f(m)\right) \log m.
\end{eqnarray*}
Hence, 
\begin{eqnarray*}
R^+(\Lambda^1) & \geq & I(\theta, X)\\
 & \geq & \log m - \left(1-f(m)\right) \log m\\
 & = & f(m) \log m
\end{eqnarray*}
which grows to infinity with $m$, so that as announced $R^+(\Lambda^1)=\infty$.

Let us now prove the converse part. 
Assume that the sequence $(f(k)\log k)_{k\in \mathbbm{N}_+} $ is
upper-bounded by some constant $C.$ In order to check that
$R^+(\Lambda^n)<\infty,$ for all $n,$ by
Proposition~\ref{prop:subadd}, it is enough to check that
$R^+(\Lambda^1_f)<\infty,$  
and thus, it is enough to exhibit a
probability distribution $Q$ over $\mathcal{X}=\mathbbm{N}$  such that 
$\sup_{\PROB\in \Lambda^1} D(\PROB,Q)<\infty. $

Let $Q$ be defined by $Q(k)= A/((1\vee (k (\log k)^2))$  for $k\geq 2$,
$Q(0),Q(1) >0$ where $A$ is a normalizing constant that ensures that 
$Q$ is a probability distribution over $\mathcal{X}.$

Then for any $k\geq 3$ (which warrants $k (\log k)^2>1$), letting $P_k$
be the probability defined by the probability mass function $p_k$: 
\begin{eqnarray*}
\lefteqn{  D(P_k , Q)} \\
&=& \left(1-f(k)\right) \log \frac{(1-f(k))}{Q(0)}
+ f(k) \log \left( \frac{f(k) k (\log k)^2}{A} \right)
\\
& \leq & -\log Q(0) + C + f(k)  \left( 2  \log^{(2)}(k) -
\log(A)\right)
\\
& \leq & C +\log \frac{C^2}{A\, Q(0)} \, .
\end{eqnarray*}
This is enough to conclude that $$R^+(\Lambda^1)\leq  \left( C+\log
\frac{C^2}{A\, Q(0)}\right)\vee D(P_1,Q) \vee D(P_2,Q) <\infty \, .$$ 
\end{proof}
\begin{rem}
Note that the coding probability used in the proof of the converse
part of the proposition corresponds to one of the simplest prefix codes
for integers proposed  by \cite{MR0373753}. 
\end{rem}

The following theorem shows that,  as far as  envelope classes are
concerned (see Definition \ref{dfn:envelope:class}), minimax redundancy and minimax regret are either both
finite of both infinite. This is indeed much less precise than the 
relation stated in Theorem~\ref{prop:finitealph} about classes of sources on finite alphabets.

\begin{thm}
\label{prop:envfinite}
Let $f$ be a non-negative function from  $\mathbbm{N}_+$ to $[0,1]$,
let $\Lambda_f$ be the class of stationary memoryless sources defined by envelope $f.$
Then 
 $$
 R^+\left(\Lambda_f^{n}\right)<\infty \equivaut
 R^*\left(\Lambda_f^{n}\right)<\infty \, . $$ 
\end{thm}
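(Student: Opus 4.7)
The easy direction is $R^{*}(\Lambda_f^n)<\infty \Rightarrow R^{+}(\Lambda_f^n)<\infty$: for any coding probability $Q^n$ and any $P\in \Lambda_f$, the pointwise inequality $\log \tfrac{P^n(\mathbf{x})}{Q^n(\mathbf{x})}\le \log \tfrac{\hat p(\mathbf{x})}{Q^n(\mathbf{x})}$ together with $P^n(\mathbf{x})\le\hat p(\mathbf{x})$ yields, after integration against $P^n$ and taking suprema, the universal estimate $R^{+}(Q^n,\Lambda_f^n)\le R^{*}(Q^n,\Lambda_f^n)$, hence $R^{+}(\Lambda_f^n)\le R^{*}(\Lambda_f^n)$. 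No envelope structure is required here.

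The interesting direction is handled by contraposition. Suppose $R^{*}(\Lambda_f^n)=\infty$. By Proposition~\ref{prop:finite} applied at $n=1$, this forces $\sum_{x}\hat p(x)=\infty$, and since $\hat p(x)\le f(x)$ the envelope itself satisfies $\sum_{x} f(x)=\infty$. We now use Theorem~\ref{thm:sion} to show $R^{+}(\Lambda_f^1)=\infty$, which by the monotonicity part of Proposition~\ref{prop:increasing} propagates to all $n$. Fix an arbitrary integer $m$. Because $f$ takes values in $[0,1]$ and has divergent sum, a greedy procedure produces pairwise disjoint finite subsets $G_1,\ldots,G_m\subseteq\Np$ with $\sum_{x\in G_i} f(x)\ge 1$ for every $i$. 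On each $G_i$ define $P_i$ by waterfilling: enumerate $G_i=\{x_{i,1},x_{i,2},\ldots\}$ and set $P_i(x_{i,j})=f(x_{i,j})$ as long as the cumulative sum stays below $1$, then truncate the last atom so that $P_i$ is a probability measure. By construction $P_i(x)\le f(x)$ for all $x$, so $P_i\in\Lambda_f$, and the support of $P_i$ is contained in $G_i$.

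Now take $W$ to be the uniform prior on $\Theta=\{1,\ldots,m\}$ with $P_\theta$ as just defined. Since the supports $G_1,\ldots,G_m$ are pairwise disjoint, observation of $X$ determines $\theta$ exactly, so $H(\theta\mid X)=0$ and
$$
I(\theta; X)=H(\theta)=\log m.
$$
By Theorem~\ref{thm:sion} this gives $R^{+}(\Lambda_f^1)\ge \log m$, and letting $m\to\infty$ yields $R^{+}(\Lambda_f^1)=\infty$. Proposition~\ref{prop:increasing} then gives $R^{+}(\Lambda_f^n)=\infty$ for every $n\ge 1$, closing the contrapositive and proving the equivalence. The only nontrivial step is recognizing that divergence of $\sum f(x)$ is strong enough to manufacture, for every $m$, a subfamily of $\Lambda_f$ with pairwise disjoint supports; the waterfilling construction and the greedy blocking argument are the main technical points, but both are routine once one observes that $f\le 1$ prevents any single atom from exhausting the budget.
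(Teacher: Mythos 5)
Your proof is correct and follows essentially the same route as the paper: both reduce the nontrivial direction to showing that divergence of $\sum_x f(x)$ forces $R^+(\Lambda_f^1)=\infty$, and both do so by manufacturing a family of sources in $\Lambda_f$ with pairwise disjoint supports (the paper via recursively defined blocks $h_i<h_{i+1}$ with $\sum_{h_i<k\le h_{i+1}} f(k)>1$ and normalization, you via greedy blocking and waterfilling), then invoking the mutual-information lower bound from Theorem~\ref{thm:sion}. The only cosmetic difference is that the paper feeds a single prior of infinite Shannon entropy over countably many such sources, while you use uniform priors on $m$ sources and let $m\to\infty$; both immediately give the same conclusion, after which monotonicity (Proposition~\ref{prop:increasing}) extends the infinity to all $n$.
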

\mbox{}\vspace{.5cm}\\
\begin{rem}
  We will refine this result after the statement of
  Corollary~~\ref{cor:subadd:regret}. 
\end{rem}

Recall from Proposition~\ref{prop:finite}  that $
 R^*\left(\Lambda_f^{n}\right)<\infty \equivaut
\sum_{k\in\Np} f(k) < \infty.$ 

\begin{proof}

In order to check 
that 
$$
 \sum_{k\in\Np} f(k) = \infty \Rightarrow
 R^+\left(\Lambda_f^{n}\right) =
 \infty \, ,
$$ it is enough to check that if $ \sum_{k\in\Np} f(k) = \infty ,$ the
envelope class contains an
infinite collection of mutually singular sources. 

Let the infinite sequence  of integers $\left( h_i\right)_{i\in
  \mathbbm{N}}$ be defined recursively by $h_0=0$ and  
$$
h_{i+1}=\min \left\{ h ~:~ \sum_{k=h_i+1}^h f(k) >1 \right\} \, . 
$$
The memoryless source $P_i$ is defined by its first marginal $P_i^1$
which is given by 
$$
P_i^1(m) = \frac{f(m)}{\sum_{k=h_i+1}^{h_{i+1}} f(k)} \text{ for }
m\in\{p_i+1, ...,p_{i+1}\} \, . 
$$ Taking any prior
with infinite Shannon entropy over the $\{P_i^1~;~i\in \Np\} $ shows
that 
$$
R^+\left(\{P_i^1~;~i\in \Np\}\right) =\infty \, . 
$$
\end{proof}


\section{Envelope classes}
\label{sec:envel:classes:generic}

The next two theorems establish quantitative relations between
minimax redundancy and regrets and the shape of the envelope
function. Even though the two theorems deal with general envelope
functions, the reader might appreciate to have two concrete examples 
of envelope in mind:  exponentially decreasing envelopes of the form 
$C e^{-\alpha k}$ for appropriate $\alpha$ and $C$, and power-laws of
the form $C k ^{-\alpha}$ again for appropriate $\alpha$  and $C.$
The former family of envelope classes extends the class of 
sources over finite (but unknown) alphabets. 
The first theorem holds for any class of memoryless sources.

\begin{thm}
\label{prop:upperbound}
If $\Lambda$ is a class of memoryless sources, 
 let the tail function
$\bar{F}_{\Lambda^1}$ be defined by 
$\bar{F}_{\Lambda^1}(u)=\sum_{k>u} \hat{p}(k),$
then:
\begin{displaymath}
R^*(\Lambda^n) \leq \inf_{u : u\leq n} \, \left[ n
  \bar{F}_{\Lambda^1}(u)\log e + \frac{u-1}{2}\log n 
  \right] + 2\, .
\end{displaymath}
\end{thm}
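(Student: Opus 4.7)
The plan is to invoke Shtarkov's identity (Proposition~\ref{prop:folk:shtarkov}), which reduces bounding $R^*(\Lambda^n)$ to bounding $\sum_{\mathbf{x} \in \Np^n}\hat p(\mathbf x)$, and then to decompose this sum according to the set $S \subseteq \{1, \dots, n\}$ of positions at which $x_i > u$. For each $\mathbf x$ with high-position set $S$ of cardinality $K$, my aim is the factorized pointwise bound
\[
\hat p(\mathbf x) \;\leq\; \hat q_u(\mathbf x|_{S^c})\, \prod_{i \in S}\hat p(x_i),
\]
where $\hat q_u(\mathbf y) = \sup_{Q \in \M_1(\{1,\dots,u\})}\prod_j Q(y_j)$ is the Shtarkov maximum likelihood for the full class $\Lambda_u$ of memoryless sources over $\{1,\dots,u\}$. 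This would follow by writing $P^n(\mathbf x) = \prod_{i \notin S} P(x_i) \cdot \prod_{i \in S} P(x_i)$ for any $P \in \Lambda$: the product over $S$ is controlled coordinate-wise by the envelope $P(x_i) \leq \hat p(x_i)$, while the product over $S^c$ depends only on $P$ restricted to $\{1, \dots, u\}$ and can only increase when that restriction is extended coordinate-wise upward to a genuine probability on $\{1,\ldots,u\}$.

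Next I would sum the factorized bound. Summing $\prod_{i \in S}\hat p(x_i)$ over $\mathbf x|_S \in \{u+1, u+2, \dots\}^K$ gives $\bar F_{\Lambda^1}(u)^K$ by independence across positions, and summing $\hat q_u(\mathbf x|_{S^c})$ over $\mathbf x|_{S^c} \in \{1, \dots, u\}^{n-K}$ gives $2^{R^*(\Lambda_u^{n-K})}$ by a second application of Proposition~\ref{prop:folk:shtarkov}. With $\binom{n}{K}$ choices for $S$ of size $K$, this yields
\[
\sum_{\mathbf x}\hat p(\mathbf x) \;\leq\; \sum_{K=0}^n \binom{n}{K}\, \bar F_{\Lambda^1}(u)^K \cdot 2^{R^*(\Lambda_u^{n-K})} \;\leq\; 2^{\frac{u-1}{2}\log n + 2}\,(1+\bar F_{\Lambda^1}(u))^n,
\]
using monotonicity of $R^*$ in block length (Proposition~\ref{prop:increasing}) together with the finite-alphabet bound $R^*(\Lambda_u^n) \leq \frac{u-1}{2}\log n + 2$ from Theorem~\ref{prop:finitealph} (valid for $n \geq 2$; the cases $n \leq 1$ are trivial), and the binomial identity $\sum_K\binom{n}{K}\bar F_{\Lambda^1}(u)^K = (1+\bar F_{\Lambda^1}(u))^n$. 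Taking $\log$ and applying $\log(1+x) \leq x \log e$ (equivalently $\ln(1+x) \leq x$) yields the claimed bound for each fixed $u \leq n$; taking the infimum over $u$ completes the proof.

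The main technical delicacy lies in the factorized pointwise bound: since no $P \in \Lambda$ can simultaneously concentrate on $\{1,\dots,u\}$ (to maximize the low factor) and on $\{u+1,u+2,\dots\}$ (to maximize the high factor), the inequality is genuinely loose, but it is loose in exactly the right way so that the slack, once amplified by the binomial coefficients, collapses to the term $n\bar F_{\Lambda^1}(u)\log e$ via $\log(1+x) \leq x\log e$.
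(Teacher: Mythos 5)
Your proof is correct and follows essentially the same route as the paper's: both decompose the Shtarkov sum by the set of positions carrying symbols larger than $u$, factor $\hat{p}$ multiplicatively across the two blocks, bound the large-symbol block by $\bar{F}_{\Lambda^1}(u)^K$ and the small-symbol block by the finite-alphabet regret bound together with monotonicity in length, and finish with the binomial theorem and $\log(1+x)\leq x\log e$. Your explicit early passage to $\hat{q}_u$, the maximum likelihood over \emph{all} memoryless sources on $\{1,\ldots,u\}$, is in fact a small gain in rigor over the paper's step (c), where monotonicity is cited for the partial sum $\sum_{\mathbf{y}\in\{1,\ldots,u\}^{n-m}}\hat{p}(\mathbf{y})$ over the original class $\Lambda$, a quantity that need not be non-decreasing in the exponent (e.g.\ when $\Lambda$ is a singleton whose marginal places mass outside $\{1,\ldots,u\}$); the intended replacement of $\hat{p}$ by $\hat{q}_u$ is only implicit in the paper at step (d), whereas you make it explicit where it is needed.
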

\mbox{}\vspace{1cm}\\

Choosing a sequence $(u_n)_n$ of positive integers in such a way that $u_n\rightarrow
\infty$ while $(u_n\log n) /n\rightarrow 0,$ this theorem allows to complete Proposition~\ref{prop:finite}. 
\begin{cor}\label{cor:subadd:regret}
Let $\Lambda$ denote a class of memoryless sources, 
then the following holds:
$$
R^*(\Lambda^n)<\infty \equivaut   R^*(\Lambda^n) = o(n) \text{ and } R^+(\Lambda^n)=o(n) \, .
$$
\end{cor}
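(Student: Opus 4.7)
The plan is to split the claimed equivalence into its two implications and reduce each one to earlier material. For the reverse direction, note that $R^{*}(\Lambda^n)=o(n)$ already forces $R^{*}(\Lambda^n)<\infty$ for all sufficiently large $n$; by the monotonicity statement of Proposition~\ref{prop:increasing}, $R^{*}$ is then finite for every $n$. So the real content lies in the forward implication.

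For the forward implication, the first step is to observe that one only needs to produce the bound $R^{*}(\Lambda^n)=o(n)$, because the general inequality $R^{+}(\Lambda^n)\le R^{*}(\Lambda^n)$ then gives $R^{+}(\Lambda^n)=o(n)$ at once. This inequality is immediate: for any coding distribution $Q^n$,
\begin{displaymath}
D(P^n,Q^n)=\mathbbm{E}_{P^n}\!\left[\log\frac{P^n(X_{1:n})}{Q^n(X_{1:n})}\right]\le \sup_{\mathbf{x}\in \mathcal{X}^n}\log\frac{\hat p(\mathbf{x})}{Q^n(\mathbf{x})},
\end{displaymath}
so taking the supremum over $P\in\Lambda$ and the infimum over $Q^n$ yields $R^{+}(\Lambda^n)\le R^{*}(\Lambda^n)$.

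The second step is to translate the hypothesis through Proposition~\ref{prop:finite}: $R^{*}(\Lambda^n)<\infty$ is equivalent to $\sum_{k\in\Np}\hat p(k)<\infty$, which means that the tail function $\bar F_{\Lambda^1}(u)=\sum_{k>u}\hat p(k)$ tends to $0$ as $u\to\infty$. The third step is then to feed this into Theorem~\ref{prop:upperbound}: choose an integer sequence $(u_n)$ with $u_n\to\infty$ but $u_n(\log n)/n\to 0$, for instance $u_n=\lceil\sqrt{n}/\log n\rceil$. Dividing the inequality of Theorem~\ref{prop:upperbound} by $n$ gives
\begin{displaymath}
\frac{R^{*}(\Lambda^n)}{n}\le \bar F_{\Lambda^1}(u_n)\log e \;+\; \frac{(u_n-1)\log n}{2n}\;+\;\frac{2}{n},
\end{displaymath}
and each of the three terms on the right-hand side tends to $0$, which concludes the argument.

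\textbf{Where the difficulty sits.} No step is delicate: the bound $R^{+}\le R^{*}$ is a one-line computation, the translation to summability of $\hat p$ is a direct citation of Proposition~\ref{prop:finite}, and the dominant task is only to pick $(u_n)$ in the window allowed by Theorem~\ref{prop:upperbound}. The main conceptual point worth emphasising is that this choice is possible precisely because Theorem~\ref{prop:upperbound} offers a whole family of upper bounds indexed by $u$; having both a free parameter and a vanishing tail is what makes the $o(n)$ rate automatic as soon as finiteness holds.
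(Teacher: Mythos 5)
Your proof is correct and follows essentially the same route the paper takes: invoke Proposition~\ref{prop:finite} to get summability of $\hat p$ (hence vanishing tail $\bar F_{\Lambda^1}$), feed a slowly diverging sequence $u_n$ with $u_n\log n/n\to0$ into Theorem~\ref{prop:upperbound}, and transfer the $o(n)$ bound to $R^+$ via the standard inequality $R^+\le R^*$. The only difference is cosmetic: you spell out the elementary inequality $R^+\le R^*$ and exhibit a concrete choice $u_n=\lceil\sqrt n/\log n\rceil$, whereas the paper simply remarks that such a sequence exists.
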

\mbox{}\vspace{.5cm}\\

\begin{rem}
We may now have a second look at Proposition~\ref{prop:example} and
Theorem~\ref{prop:envfinite}. In the setting of
Proposition~\ref{prop:example}, this Corollary asserts that if $\sum_k
f(k) <\infty ,$ for the source
class defined by $f$, a non-trivial strong redundancy rate  exists. 

On the other hand, this corollary complements Theorem~\ref{prop:envfinite} by asserting
that  envelope classes have either non-trivial strong redundancy rates
or infinite minimax redundancies. 
\end{rem}

\begin{rem}
Again,   this statement has to be connected with related propositions from 
\cite{MR1604481}. The last paper  establishes bounds
on  minimax redundancy using geometric properties of the source
class under Hellinger metric. For example, 
Theorem 4 in \citep{MR1604481} relates minimax redundancy and the
metric dimension of the set $\Lambda^n$ with respect to the Hellinger
metric (which coincides with $L_2$ metric between the square roots of
densities) under the implicit assumption that sources lying in small
Hellinger balls have finite relative entropy (so that upper bounds in
Lemma 7 there are finite). Envelope classes may not satisfy this
assumption. Hence, there is no easy way to connect
Theorem~\ref{prop:upperbound}
and results from~\citep{MR1604481}. 
\end{rem}
\begin{proof}(Theorem~\ref{prop:upperbound}.)
Any integer $u$ defines a decomposition of a string $\mathbf{x}\in \Np^n$  into two non-contiguous
substrings: a substring $\mathbf{z}$ made of the $m$ symbols from
$\mathbf{x}$ that are larger than $u$, and one substring
$\mathbf{y}$ made of the $n-m$ symbols that are smaller than $u.$
\begin{eqnarray*}
\lefteqn{\sum_{\mathbf{x}\in\Np^n} \hat{p}(\mathbf{x})}\\
 & \stackrel{(a)}{=} & 
\sum_{m=0}^{n} \binom{n}{m} \sum_{\mathbf{z}\in \{u+1,...\}^m} \sum_{\mathbf{y}\in
\{1,2,\ldots,u\}^{n-m}}  \hat{p}\left(\mathbf{z}\mathbf{y}\right) \label{prup:l1}\\
& \stackrel{(b)}{\leq} & \sum_{m=0}^{n} \binom{n}{m}
\sum_{\mathbf{z}\in \{u+1,...\}^m}
\prod_{i=1}^m \hat{p}\left(\mathbf{z}_i\right) 
\sum_{\mathbf{y}\in\{1,2,\ldots,u\}^{n-m}}
\hat{p}\left(\mathbf{y}\right)
\\
& \stackrel{(c)}{\leq} & \left(\sum_{m=0}^{n} \binom{n}{m} \bar{F}_{\Lambda^1}(u)^m \right)
\left(\sum_{\mathbf{y}\in\{1,2,\ldots,u\}^n}
\hat{p}\left(\mathbf{y}\right)\right)
\\
& \stackrel{(d)}{\leq} &  \left(1+\bar{F}_{\Lambda^1}(u)\right)^n
2^{\frac{u-1}{2}\log{n}{}+ 2} \, . 
\end{eqnarray*}
Equation  (a) is obtained by  reordering the symbols in the strings,
Inequalities (b) and (c)  follow respectively from Proposition
\ref{prop:subadd} and Proposition \ref{prop:increasing}.
Inequality (d)  is a direct consequence  of the last inequality in Theorem~\ref{prop:finitealph}.

Hence, 
\begin{eqnarray*}
R^*(\Lambda^n) & \leq & n\log \left(1+\bar{F}_{\Lambda^1}(u)\right) +
\frac{u-1}{2}\log {n} +2\\
& \leq & n\bar{F}_{\Lambda^1}(u)\log e + \frac{u-1}{2}\log {n} +2
\end{eqnarray*}

\end{proof}

The next theorem complements the upper-bound on minimax regret for
envelope classes (Theorem~\ref{prop:upperbound}). It describes a general
lower bound on minimax redundancy for envelope classes. 
 
\begin{thm}
  \label{prop:desperate}
Let $f$ denote a non-increasing, summable envelope function. 
 For any integer $p,$ let $c(p)=\sum_{k=1}^p f(2k).$
Let $c(\infty)=\sum_{k\geq 1} f(2k).$ Assume furthermore that
$c(\infty)>1.$ Let $p\in\Np$ be such that $c(p)>1.$ 
Let  $n\in\Np$, $\epsilon>0$ and $\lambda\in]0,1[$ be such
    that 
\begin{math}
  n>\frac{c(p)}{f(2p)}\frac{10}{\epsilon(1-\lambda)}\,. 
\end{math}
Then
\begin{equation*}
  R^+(\Lambda^n_f) \geq C(p,n,\lambda, \epsilon) \sum_{i=1}^p\left(
  \frac{1}{2} \log \frac{n\left(1-\lambda\right)\pi f(2i)}{2c(p)e}-\epsilon\right),
\end{equation*} 
where 
\begin{math}
C(p,n,\lambda, \epsilon) = 
\frac{1}{1+\frac{c(p)}{\lambda^2nf(2p)}}\left(1-
    \frac{4}{\pi}\sqrt{\frac{5 c(p)}{\left(1-\lambda\right)\epsilon n
    f(2p)}}\right).
\end{math}
\end{thm}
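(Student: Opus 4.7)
The plan is to invoke the maximin lower bound of Theorem~\ref{thm:sion}: construct an explicit parametric subfamily of $\Lambda_f$ indexed by $\theta\in[0,1]^p$, equip it with a product prior whose marginal on each coordinate is the Jeffreys Beta$(1/2,1/2)$ prior, and estimate the resulting Bayes redundancy $\EXP_W[D(P_\theta^n,Q^n)]=I(\theta;X_{1:n})$ from below.

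The construction couples symbols in pairs. For each $i\in\{1,\dots,p\}$ and $\theta_i\in[0,1]$, set $P_\theta(2i-1)=\theta_i f(2i)/c(p)$ and $P_\theta(2i)=(1-\theta_i) f(2i)/c(p)$, all other symbols being assigned zero mass. The total mass is $1$ by definition of $c(p)$; since $c(p)>1$, one has $P_\theta(2i)\le f(2i)$, and since $f$ is non-increasing, $P_\theta(2i-1)\le f(2i)\le f(2i-1)$, so $P_\theta\in\Lambda_f$. The key structural observation is that the vector $N=(N_1,\dots,N_p)$ of pair-occupation counts, $N_i=|\{j\le n:X_j\in\{2i-1,2i\}\}|$, is multinomial with parameters $(n,f(2i)/c(p))_i$ and is therefore $\theta$-free. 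Conditionally on $N$, the coordinates of $\theta$ decouple, and the number $B_i$ of occurrences of symbol $2i-1$ is, conditional on $N_i$, a Binomial$(N_i,\theta_i)$ observation.

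From these two remarks and the chain rule,
\begin{equation*}
I(\theta;X_{1:n})=\EXP\Big[\sum_{i=1}^p I(\theta_i;B_i\mid N_i)\Big].
\end{equation*}
Each inner term is the Bayes redundancy of a Bernoulli model with sample size $N_i$ under the Beta$(1/2,1/2)$ prior. The $m=2$ case of Clarke and Barron (recalled in the first line of Theorem~\ref{prop:finitealph}) yields, for $N_i$ large enough,
\begin{equation*}
I(\theta_i;B_i\mid N_i)\ge \tfrac{1}{2}\log\tfrac{N_i\pi}{2e}-\epsilon,
\end{equation*}
with the error term becoming negligible as $N_i\to\infty$; this is where the additive $-\epsilon$ in the conclusion originates.

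It remains to transfer this conditional lower bound, valid on the event that every $N_i$ is sufficiently large, into an unconditional one. Since $f$ is non-increasing, $N_p$ is the smallest of the $N_i$ in expectation, so it is enough to control $N_p$. A Chebyshev estimate for the binomial $N_p$ with mean $\mu_p=nf(2p)/c(p)$ and variance at most $\mu_p$ gives $\PROB\{N_p\ge(1-\lambda)\mu_p\}\ge 1-\frac{c(p)}{\lambda^2 nf(2p)}$, which accounts for the leading multiplicative factor $1/(1+\tfrac{c(p)}{\lambda^2 nf(2p)})$ in $C(p,n,\lambda,\epsilon)$. Restricting the expectation to this event, replacing $N_i$ by $(1-\lambda)nf(2i)/c(p)$ in each logarithm, and absorbing the remainder term of Clarke--Barron using the quantitative form of the Bayes redundancy on Bernoulli samples (whose error decays like $1/\sqrt{N_p}$) produces the square-root correction $\tfrac{4}{\pi}\sqrt{5c(p)/((1-\lambda)\epsilon nf(2p))}$; the hypothesis $n>\tfrac{c(p)}{f(2p)}\tfrac{10}{\epsilon(1-\lambda)}$ is precisely what is needed to make this correction less than $1$ and the bound nonvacuous. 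The main technical obstacle will be tracking these two truncation losses simultaneously with constants that match the statement; the rest is bookkeeping from Jeffreys-prior asymptotics and Chebyshev's inequality.
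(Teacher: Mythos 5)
Your proposal follows the paper's proof essentially step for step: the same paired construction, the same product Jeffreys prior, the same sufficient-statistic decomposition through the $\theta$-free multinomial pair counts $\mathbf{N}$, the same reduction to conditional Bernoulli Bayes redundancies, and a Chebyshev-type control of the $N_i$. Two points where your accounting of the constants is slightly off: the $\sqrt{\cdot}$-correction in $C(p,n,\lambda,\epsilon)$ does not come from a remainder term in the Clarke--Barron expansion, but from the Jeffreys-prior mass of the near-boundary interval $\theta_i\in[0,5/(n_i\epsilon)]\cup[1-5/(n_i\epsilon),1]$ outside of which the non-asymptotic Xie--Barron bound (Lemma~\ref{lem:barron:xie}) does not apply, via the elementary inequality $\int_\epsilon^{1-\epsilon}\frac{\mathrm{d}x}{\pi\sqrt{x(1-x)}}>1-\tfrac{4}{\pi}\sqrt{\epsilon}$; and to obtain the exact prefactor $1/\bigl(1+c(p)/(\lambda^2 n f(2p))\bigr)$ rather than $1-c(p)/(\lambda^2 n f(2p))$ one needs the one-sided Chebyshev--Cantelli inequality, applied coordinatewise to each $N_i$, not plain Chebyshev on $N_p$ alone.
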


Before proceeding to the proof, let us mention the following
non-asymptotic bound from \cite{xie:barron:1997}. 
Let $m^{\ast}_n$ denote the Krichevsky-Trofimov distribution over $\{ 0, 1
\}^n .$ That is, for any $\mathbf{x}\in \{0,1\}^n,$ such that
$n_1=\sum_{i=1}^n \mathbf{x}_i$ and $n_0=n-n_1$
\[ m_n^{\ast} \left( \mathbf{x} \right) = \frac{1}{\pi} \int_{[ 0, 1 ]}
\theta^{n_1 -
   1 / 2} ( 1 - \theta )^{n_0 - 1 / 2} \mathd \theta . \]
\begin{lem}\citep[][Lemma 1]{xie:barron:1997}\label{lem:barron:xie}
  For any $\varepsilon > 0$, there exists a $c ( \varepsilon )$ such that for
  $n > 2 c ( \varepsilon )$ the following holds uniformly over $\theta \in [ c
  ( \varepsilon ) / n, 1 - c ( \varepsilon ) / n ]$:
  \[ \left| D \left( p^n_{\theta}, m_n^{\ast} \right) - \frac{1}{2} \log
     \frac{n}{2 \pi \mathe} - \log \pi \right| \leqslant \varepsilon . \]
  The bound $c ( \varepsilon )$ can be chosen as small as $5 / \varepsilon .$
\end{lem}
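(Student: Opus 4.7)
The plan is to compute $D(p^n_\theta, m_n^\ast)$ in closed form using the Beta-function representation of the Krichevsky--Trofimov mixture and then to apply Stirling's formula with an explicit remainder. Since
\begin{displaymath}
 m_n^\ast(\mathbf{x}) = \frac{1}{\pi}\, \frac{\Gamma(n_1+1/2)\,\Gamma(n_0+1/2)}{\Gamma(n+1)},
\end{displaymath}
where $n_1=\sum_i x_i$ and $n_0=n-n_1$, and $\mathbb{E}_\theta[\log p^n_\theta(X)] = n\theta\log\theta + n(1-\theta)\log(1-\theta)$, the divergence rewrites as
\begin{displaymath}
 D(p^n_\theta, m_n^\ast) = n\bigl[\theta\log\theta + (1-\theta)\log(1-\theta)\bigr] + \log\pi + \log\Gamma(n+1) - \mathbb{E}_\theta\log\Gamma(N_1+\tfrac12) - \mathbb{E}_\theta\log\Gamma(N_0+\tfrac12),
\end{displaymath}
with $N_1\sim\mathrm{Bin}(n,\theta)$ and $N_0=n-N_1$.

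Next, I would apply the quantitative Stirling bound $\log\Gamma(z) = (z-\tfrac12)\log z - z + \tfrac12\log(2\pi) + r(z)$ with $|r(z)|\le 1/(12z)$ to the deterministic term $\log\Gamma(n+1)$, and use it inside both expectations to reduce $\log\Gamma(N_j+\tfrac12)$ to $N_j\log N_j - N_j + \tfrac12\log(2\pi)$ up to a controlled error. It then remains to estimate $\mathbb{E}_\theta[N_1\log N_1]$ and $\mathbb{E}_\theta[N_0\log N_0]$. Taylor-expanding $\varphi(x)=x\log x$ around $n\theta$ (so that $\varphi'(x)=\log x+1$ and $\varphi''(x)=1/x$), the first-order term vanishes, the second-order term contributes $\tfrac12\,\mathrm{Var}_\theta(N_1)/(n\theta)=(1-\theta)/2$, and the symmetric computation for $N_0$ contributes $\theta/2$. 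The entropy-like term $n\bigl[\theta\log\theta+(1-\theta)\log(1-\theta)\bigr]$ cancels with the leading Taylor expansion of the $N_j\log N_j$ pieces, the $\tfrac12$-constants collapse to $\tfrac12\log(n/(2\pi\mathe))+\log\pi$, and one recovers the claimed main term.

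The main obstacle is ensuring this control holds uniformly in $\theta\in[c(\varepsilon)/n, 1-c(\varepsilon)/n]$ with the sharp explicit constant $c(\varepsilon)=5/\varepsilon$. Three error sources must be tracked simultaneously: (i) the Stirling remainder, of order $\mathbb{E}_\theta[1/(N_1+1/2)]+1/n$; (ii) the cubic Taylor remainder of $\varphi$, which requires splitting the sample space into the typical event $\{N_1\ge n\theta/2\}$ — where a cubic-moment bound on $|N_1-n\theta|$ gives an $O(1/(n\theta))$ contribution — and the atypical event $\{N_1<n\theta/2\}$, whose probability decays exponentially by a Chernoff bound; (iii) the degenerate case $\{N_1=0\}$, on which $\log\Gamma(1/2)=\log\sqrt{\pi}$ directly and whose probability $(1-\theta)^n\le\mathe^{-n\theta}$ is exponentially small as soon as $n\theta\ge c(\varepsilon)$. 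Each contribution is of order $1/(n\theta)+1/(n(1-\theta))$ up to exponentially small corrections, and a careful accounting — combining the $1/12$ Stirling constant, the second-order Taylor coefficient and a cubic-moment estimate — produces total error $\le\varepsilon$ as soon as $\min(n\theta, n(1-\theta))\ge 5/\varepsilon$. Making these constants sharp rather than merely asymptotic is essentially the full technical content of the lemma.
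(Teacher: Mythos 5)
This lemma is cited in the paper directly from Xie and Barron (1997, Lemma~1); the paper offers no proof of its own, so the only thing to assess is whether your argument would actually establish the statement. Your closed-form expression for $m_n^\ast$ via the Beta function, the resulting decomposition of $D(p^n_\theta, m_n^\ast)$ into $\log\Gamma(n+1)$, $\mathbb{E}_\theta\log\Gamma(N_j+\tfrac12)$ and the entropy term, and the Stirling-plus-Taylor reduction are the standard route, and your main-term bookkeeping is correct: the $n\log n$ and entropy terms cancel, the second-order Taylor contributions give $(1-\theta)/2+\theta/2=\tfrac12$, and what survives is $\log\pi + \tfrac12\log\frac{n}{2\pi\mathe}$ as claimed. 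So the asymptotic is right and the approach is essentially the one the cited authors use.

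The gap is that the only nontrivial quantitative content of the lemma --- that the approximation error is at most $\varepsilon$ uniformly as soon as $\min(n\theta, n(1-\theta))\ge 5/\varepsilon$ --- is not actually established. You identify the three error sources (Stirling remainder of order $\mathbb{E}[1/(N_1+\tfrac12)]$, cubic Taylor remainder controlled by a third-moment bound on the typical event, and exponentially small atypical contributions), but you then write that "making these constants sharp ... is essentially the full technical content of the lemma" and stop. That is precisely the part that needs to be proved. In particular, near the endpoints of the interval, $n\theta$ can be as small as $5/\varepsilon$, so for moderate $\varepsilon$ the binomial $N_1$ has mean only a single-digit constant, the fluctuations $|N_1-n\theta|$ are comparable to the mean, and the second-order Taylor expansion of $\varphi(x)=x\log x$ with $\varphi''(\xi)=1/\xi$ evaluated at a random $\xi$ between $N_1$ and $n\theta$ does not obviously give a remainder bounded by $\varepsilon$ with the constant $5$. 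Your truncation at $\{N_1\ge n\theta/2\}$ plus a Chernoff bound is the right idea, but without carrying the constants through (the $1/12$ in Stirling, the explicit third absolute central moment of the binomial, the Chernoff exponent) one cannot conclude that $c(\varepsilon)=5/\varepsilon$ suffices rather than, say, $50/\varepsilon$. As written, the argument proves the asymptotic convergence $D(p^n_\theta,m_n^\ast)\to \tfrac12\log\frac{n}{2\pi\mathe}+\log\pi$ uniformly on compacts of $(0,1)$, but not the explicit non-asymptotic bound that the lemma --- and its use in the proof of Theorem~\ref{prop:desperate} --- actually requires.
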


\begin{proof}
The proof is organized in the following way. A prior probability 
distribution is first designed in such a way that it is supported
by probability distributions that satisfy the envelope condition, have support equal to $\{1,,\ldots,2p\},$ and most importantly enjoys the following property. 
Letting $\mathbf{N}$ be the random vector from $\mathbbm{N}^p$ defined by 
$\mathbf{N}_i(\mathbf{x})= |\{j~:~\mathbf{x}_j\in \{2i-1,2i\}\}|,$ where $\mathbf{x}\in \mathcal{X}^n$, letting $Q^*$ be the mixture distribution over $\mathcal{X}^n$ defined by the prior, then for any $P$ in the support of the prior, $P^n/Q^*= P^n\{.\mid \mathbf{N}\}/Q^*\{.\mid \mathbf{N}\}.$ This property will provide a handy way to bound the capacity of the channel. 

Let $f,n,p,\epsilon\text{ and } \lambda$    be as in the statement of the
theorem. 
Let us first define a prior probability  on $\Lambda^1_f.$  For
each integer $i$ between $1$ and $p,$ let $\mu_i$ be defined as
\begin{displaymath}
   \mu_i= \frac{f(2i)}{c(p)} \, . 
\end{displaymath}
This ensures that the sequence $(\mu_i)_{i\leq p}$ defines a probability mass function over $\{1,\ldots,p\}.$ 
Let $\boldsymbol{\theta}=(\theta_i)_{1\leq i\leq p}$  be a collection  of independent random
variables each distributed according to a Beta distribution with
parameters $(1/2,1/2)$.  The prior probability $W=\otimes_{i=1}^{p}W_{i}$ for $\boldsymbol{\theta}$ on
$[0,1]^{p}$ has thus density
$w$ given by
\begin{displaymath}
  w(\boldsymbol{\theta}) = \frac{1}{\pi^p} \prod_{i=1}^p
  \left({\theta}_i^{-1/2}(1-{\theta}_i)^{-1/2}
  \right) \, . 
\end{displaymath}
The memoryless source parameterized by
$\boldsymbol{\theta}$
is defined by the probability mass function $p_{\boldsymbol{\theta}}(2i-1)= \theta_i\,
\mu_i$ and $p_{\boldsymbol{\theta}}(2i)=(1-\theta_i)\mu_i$ for $i:~1\leq i\leq p$ and $p_{\boldsymbol{\theta}}(j)=0$ for $j>2i.$
Thanks to the condition $c(p)>1,$ this probability mass function
satisfies the envelope condition.

For $i\leq p,$ let the random variable $N_i$ (resp. $N^0_i$) be defined as the number
of occurrences of $\{2i-1,2i\}$ (resp. $2i-1$) in the sequence $\mathbf{x}.$   Let
$\mathbf{N}$ (resp. $\mathbf{N}^0$) denote the random vector
$N_1,\ldots,N_p$ (resp. $N^0_1,\ldots,N^0_p$).
If a sequence $\mathbf{x}$ from $\{1,\ldots,2p\}^n$ contains $n_i =N_i(\mathbf{x})$ symbols from
$\{2i-1,2i\}$ for each $i\in \{1,\ldots,p\},$ and if for each such
$i,$
the sequence contains $n_{i}^0=N^0_i(\mathbf{x})$ ($n_i^1$) symbols equal to $2i-1$
(resp. $2i$) then
\begin{displaymath}
  P^n_{\boldsymbol{\theta}}(\mathbf{x}) = \prod_{i=1}^p \left( \mu_i^{n_i}\, 
  \theta_i^{n_i^0} \, (1-\theta_i)^{n_i^1} \right) \, .
\end{displaymath} 
Note that when the source $\boldsymbol{\theta}$
is picked according to the prior  $W$ and the sequence
 $X_{1:n}$ picked according 
to $P_{\boldsymbol{\theta}}^n,$
the   random vector $\mathbf{N}$  is multinomially distributed with
parameters $n$ and $(\mu_1,\mu_2,\ldots,\mu_p),$ so the distribution
of $\mathbf{N}$ does not depend on the outcome of $\boldsymbol{\theta}.$
Moreover, conditionally on $\mathbf{N},$ the conditional probability
$P_{\boldsymbol{\theta}}^n\left\{\cdot\mid \mathbf{N}\right\}$  
is a  product distribution: 
\begin{displaymath}
  P^n_{\boldsymbol{\theta}}(\mathbf{x}\mid \mathbf{N}) = \prod_{i=1}^p \left(\theta_i^{n_i^0} \, (1-\theta_i)^{n_i^1} \right) \, .
\end{displaymath} 
In statistical parlance, the random vectors $\mathbf{N}$ and
$\mathbf{N}^0$ form a sufficient statistic for $\boldsymbol{\theta}.$

Let $Q^*$ denote the mixture distribution on $\mathbbm{N}_+^n$ induced by $W$:
\begin{displaymath}
  Q^*(\mathbf{x})
  =\mathbbm{E}_W\left[P^n_{\boldsymbol{\theta}}(\mathbf{x})\right] \, ,
\end{displaymath}
and, for each $n,$ let $m^*_n$ denote the  Krichevsky-Trofimov mixture over
$\{0,1\}^n,$
then 
\begin{displaymath}
  Q^*(\mathbf{x}) = \prod_{i=1}^p \left( \mu_i^{n_i} \,
    m^*_{n_i}(0^{n_i^0}1^{n_i^1})\right) 
  \, . 
\end{displaymath}
For a given value of $\mathbf{N},$ the conditional probability
$Q^*\left\{\cdot \mid \mathbf{N}\right\}$ is also a product
distribution:
\begin{displaymath}
  Q^*\left(\mathbf{x}\mid \mathbf{N}\right) = \prod_{i=1}^p\,
  m^*_{N_i}(0^{n^0_i}1^{n^1_i}) \, , 
\end{displaymath}
so, we will be able to rely on: 
\begin{displaymath}
  \frac{P^n_{\boldsymbol{\theta}}(\mathbf{x})}{Q^*(\mathbf{x})}
= \frac{P^n_{\boldsymbol{\theta}}(\mathbf{x}\mid \mathbf{N})}{Q^*(\mathbf{x}\mid
  \mathbf{N})} \, . 
\end{displaymath}

Now, the average redundancy of $Q^*$ with respect to
$P^n_{\boldsymbol{\theta}}$   can be rewritten in a handy way. 
\begin{eqnarray*}
 {\mathbbm{E}_W\left[ D\left(P^n_{\boldsymbol{\theta}}, Q^*
    \right) \right]} 
& =& \mathbbm{E}_W\left[ \mathbbm{E}_{P^n_{\boldsymbol{\theta}}}\left[
\log 
\frac{P^n_{\boldsymbol{\theta}}(X_{1:n}\mid \mathbf{N})}{Q^*(X_{1:n}\mid
  \mathbf{N})}
\right]
 \right]
\\
& & \text{from the last equation,}
\\
& =& \mathbbm{E}_W\left[ \mathbbm{E}_{P^n_{\boldsymbol{\theta}}}\left[
\mathbbm{E}_{P^n_{\boldsymbol{\theta}}}\left[
\log 
\frac{P^n_{\boldsymbol{\theta}}(X_{1:n}\mid \mathbf{N})}{Q^*(X_{1:n}\mid
  \mathbf{N})} \big\vert \mathbf{N}
\right]
\right]
 \right]
\\
& =& 
\mathbbm{E}_{W} 
\left[ 
\mathbbm{E}_{P^n_{\boldsymbol{\theta}}}\left[
D\left(P^{n}_{\boldsymbol{\theta}}(\cdot\mid \mathbf{N}), Q^*_{n}(\cdot \mid \mathbf{N})\right)
    \right] \right]
\\
& =& 
\mathbbm{E}_{W} 
\left[ 
\mathbbm{E}_{\mathbf{N}}\left[
D\left(P^{n}_{\boldsymbol{\theta}}(\cdot\mid \mathbf{N}), Q^*(\cdot \mid \mathbf{N}) \right)
    \right] \right]
\\
& & \text{as the distribution of $\mathbf{N}$ does not depend on
  $\boldsymbol{\theta}$,}
\\
&=& 
\mathbbm{E}_{\mathbf{N}} 
\left[ 
\mathbbm{E}_{W}\left[
D\left(P^{n}_{\boldsymbol{\theta}}(\cdot\mid \mathbf{N}), Q^*(\cdot\mid \mathbf{N}) \right)
    \right] \right]
\, 
\\
&& \text{by Fubini's Theorem.}
\end{eqnarray*}
We may  develop $D\left(P^{n}_{\boldsymbol{\theta}}(\cdot\mid
  \mathbf{N}), Q^*(\cdot\mid \mathbf{N}) \right)$
for a given value of $\mathbf{N}=(n_1, n_2, \ldots,n_p).$ As both
  $P^n_{\boldsymbol{\theta}}(\cdot\mid \mathbf{N})$
and $Q^*(\cdot\mid \mathbf{N})$ are product distributions on 
$\prod_{i=1}^p \left( \{2i-1,2i\}^{n_i}\right), $ we have 
\begin{eqnarray*}
\mathbbm{E}_W\left[D\left(P^{n}_{\boldsymbol{\theta}}(\cdot\mid \mathbf{N}),
  Q^*(\cdot\mid \mathbf{N}) \right)  \right]
& =&  
  \mathbbm{E}_W\left[\sum_{i=1}^p D\left(P^{n_i}_{{\theta}_i},
  m^*_{n_i} \right)  \right] 
\\
& = & 
\sum_{i=1}^p \, \mathbbm{E}_{W_i}\left[ D\left(P^{n_i}_{{\theta}_i},
  m^*_{n_i} \right)  \right] 
\, . 
\end{eqnarray*}
The minimal average redundancy of $\Lambda^n_f$ with respect to the
mixing distribution $W$ is thus finally given by: 
\begin{eqnarray}
{\mathbbm{E}_W\left[ D\left(P^n_{\boldsymbol{\theta}}, Q^*
    \right) \right]} \notag
& =& 
\mathbbm{E}_{\mathbf{N}}
\left[ \sum_{i=1}^p \, \mathbbm{E}_{W_i}
\left[ D\left(P^{n_i}_{{\theta}_i},m^*_{n_i} \right)  \right] \right]
\\ \notag
 & =& \sum_{i=1}^p 
\mathbbm{E}_{\mathbf{N}}
\left[  \mathbbm{E}_{W_i}
\left[ D\left(P^{n_i}_{{\theta}_i},  m^*_{n_i} \right)  \right] \right]
\\ 
& = & \sum_{i=1}^p 
\sum_{n_i=0}^n \binom{n}{n_i} \mu_i^{n_i} (1-\mu_i)^{n-n_i}
\mathbbm{E}_{W_i}\left[D\left(P_{\theta_i}^{n_i}, m_{n_i}^*\right)\right]  \label{eq:passansmal}\, . 
\end{eqnarray}
Hence, the minimal redundancy of $\Lambda^n_f$  with respect to
 prior probability $W$ is a weighted average of redundancies of
 Krichevsky-Trofimov mixtures over binary strings with different
 lengths. 

 At some place, we will use 
 the Chebychef-Cantelli inequality
\citep[see][]{devroye:gyorfi:lugosi:1996} which asserts that  for a square-integrable random variable:
\begin{displaymath}
  \Pr \left\{ X \leq \mathbbm{E}[X]-t\right\}\leq
  \frac{\text{Var}(X)}{\text{Var}(X)+t^2} \, . 
\end{displaymath}

Besides, note that for all $\epsilon<\frac{1}{2}$, 
\begin{equation}\label{eq:obs:triv:beta}
\int_\epsilon^{1-\epsilon} \frac{\mathd x}{\pi\sqrt{x(1-x)}}
>1-\frac{4}{\pi}\sqrt{\epsilon}.
\end{equation}
Now, the proposition is derived by  processing the right-hand-side of
Equation (\ref{eq:passansmal}). \\
Under condition $\left(1-\lambda\right)n\frac{f(2p)}{c(p)}>\frac{10}{\epsilon}$, we
have  $\frac{5}{n_i\epsilon}<\frac{1}{2}$ for all $i\leq p$ such that $ n_i\geq \left(1-\lambda\right)n\mu_i$.
Hence,
\begin{eqnarray*}
\lefteqn{\mathbbm{E}_W\left[ D\left(P^n_{\boldsymbol{\theta}}, Q^*
    \right) \right]}
\\
& \geq & \sum_{i=1}^p 
\sum_{n_i\geq (1-\lambda)n\mu_i}^n \binom{n}{n_i} \mu_i^{n_i}
(1-\mu_i)^{n-n_i}
\int_{0}^1 \frac{  
D\left(P_{\theta_i}^{n_i}, m_{n_i}^*\right)}{
\sqrt{\theta_i(1-\theta_i)}}\, \mathd \theta_i
\\
& \geq & \sum_{i=1}^p 
\sum_{n_i\geq (1-\lambda)n\mu_i}^n \binom{n}{n_i} \mu_i^{n_i}
(1-\mu_i)^{n-n_i}
\int_{\frac{5}{n_i \, \epsilon}}^{1-\frac{5}{n_i \,\epsilon}}
\frac{\frac{1}{2} \log \frac{n_i}{2\pi\mathe} +\log \pi-\epsilon }{ 
\pi\sqrt{\theta_i(1-\theta_i)}}\,\mathd \theta_i 
\\
&&\text{ by Proposition~\ref{lem:barron:xie} from \cite{xie:barron:1997}}
\\
& \geq & 
\sum_{i=1}^p 
\sum_{n_i\geq (1-\lambda)n\mu_i}^n \binom{n}{n_i} \mu_i^{n_i}
(1-\mu_i)^{n-n_i}
\left(1- \frac{4}{\pi}\sqrt{\frac{5}{n_i\epsilon}}\right) 
\left(\frac{1}{2} \log \frac{n_i}{2\pi\mathe} +\log
  \pi-\epsilon  \right) 
\\
& & \text{from~(\ref{eq:obs:triv:beta})}
\\
& \geq & 
\sum_{i=1}^p 
\sum_{n_i\geq (1-\lambda)n\mu_i}^n \binom{n}{n_i} \mu_i^{n_i}
(1-\mu_i)^{n-n_i}
\left(1- \frac{4}{\pi}\sqrt{\frac{5}{\left(1-\lambda\right)n\mu_i\epsilon}}\right) 
\left(\frac{1}{2} \log \frac{n (1-\lambda)\mu_i}{2\pi\mathe} +\log
  \pi-\epsilon  \right) \\
& & \text{using monotonicity of $x\log x$}
\\
& \geq & 
\sum_{i=1}^p 
\frac{1}{1+\frac{1-\mu_i}{n\,\mu_i\,\lambda^2}}
\left(1- \frac{4}{\pi}\sqrt{\frac{5}{\left(1-\lambda\right)n\mu_i\epsilon}}\right) 
\left(\frac{1}{2} \log \frac{n (1-\lambda)\mu_i}{2\pi\mathe} +\log
  \pi-\epsilon  \right) 
\\
&& \text{invoking the Chebychef-Cantelli inequality,}
\\
& \geq &
\frac{1}{1+\frac{c(p)}{\lambda^2nf(2p)}}\left(1-
    \frac{4}{\pi}\sqrt{\frac{5c(p)}{\left(1-\lambda\right)\epsilon n
    f(2p)}}\right)
\sum_{i=1}^p \left(\frac{1}{2} \log \frac{n (1-\lambda)f(2i)}{2c(p)\pi\mathe} +\log
  \pi-\epsilon  \right) \, 
  \\
  && \text{using monotonicity assumption on $f$.} 
\end{eqnarray*}

\end{proof}

\section{Examples of  envelope classes}
\label{sec:ex}
Theorems~\ref{prop:envfinite},~\ref{prop:upperbound} and~\ref{prop:desperate} assert that the summability of the
envelope defining a class of memoryless sources characterizes the
(strong) universal compressibility of that class. However, it is not
easy to figure out whether the bounds provided by the last two
theorems are close to each other or not.
In this Section, we investigate the case of 
envelopes which decline either like power laws or exponentially fast. 
In both cases, upper-bounds on minimax regret will follow directly
from Theorem~\ref{prop:upperbound} and a straightforward
optimization. Specific lower bounds on minimax redundancies are
derived by mimicking the proof of Theorem~\ref{prop:desperate}, either
faithfully as in the case of  exponential envelopes or by developing
an alternative prior as in the case of power-law envelopes. 



\subsection{Power-law envelope classes}
\label{sec:power-law-envelope}

Let us first agree on the classical notation: 
\begin{math}
  \zeta(\alpha)=\sum_{k\geq 1} \frac{1}{k^\alpha}\, ,\text{ for
  }\alpha>1\, .
\end{math}
\begin{thm}\label{th:powerlaw}
Let $\alpha$ denote a real number larger than $1,$ and $C$ be such
that $C \zeta(\alpha)\geq 2^\alpha. $
The source class $\Lambda_{C\, \cdot^{-\alpha}}$ is the envelope class
associated with the decreasing function
\begin{math}
  f_{\alpha, C}:x\mapsto1\wedge \frac{C}{x^\alpha}
\end{math}
for $C>1$ and  $\alpha>1 .$ \\
Then:
\begin{enumerate}
\item  
\[
n^{1/\alpha}\, A(\alpha) \, \log \left\lfloor ({C}{\zeta(\alpha)})
^{1/\alpha} 
\right\rfloor \leq R^+(\Lambda_{C\, \cdot^{-\alpha}}^n)
\]
where 
$$
A(\alpha) = \frac{1}{\alpha}  \int_1^{\infty} \frac{1}{u^{1
- 1 / \alpha}}  \left( 1 - \text{e}^{ - {1}/({\zeta (\alpha) u)} }
\right) \mathd u
\, . $$
\item 
\[
 R^*(\Lambda_{C\, \cdot^{-\alpha}}^n)\leq
 \left(\frac{2Cn}{\alpha-1}\right)^{1/\alpha}\left(\log
 n\right)^{1-1/\alpha}+O(1)\, .
\]
\end{enumerate}

\end{thm}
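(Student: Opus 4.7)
My plan is to treat the two parts separately. Part~2 follows from a one-line application of Theorem~\ref{prop:upperbound} together with the explicit tail of the power-law envelope, while Part~1 requires constructing a prior tailored to the power-law profile, which I then feed into the maximin bound of Theorem~\ref{thm:sion}.

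For Part~2, I start from the estimate $\bar F_{\Lambda^1}(u) \le C/((\alpha-1)u^{\alpha-1})$, obtained by comparing the sum $\sum_{k>u} C/k^\alpha$ with an integral. Theorem~\ref{prop:upperbound} then gives
\begin{displaymath}
R^*(\Lambda_{C\cdot^{-\alpha}}^n) \;\le\; \frac{nC\log e}{(\alpha-1)u^{\alpha-1}} + \frac{u-1}{2}\log n + 2\,,
\end{displaymath}
and I balance the two contributions by choosing $u \asymp (Cn/\log n)^{1/\alpha}$. Both terms become of order $(Cn)^{1/\alpha}(\log n)^{1-1/\alpha}$, and a careful accounting of constants reproduces the announced bound up to the $O(1)$ error.

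For Part~1, set $M = \lfloor (C\zeta(\alpha))^{1/\alpha}\rfloor$ and partition $\Np$ into consecutive blocks $G_j = \{(j-1)M+1,\ldots,jM\}$ of size $M$. Define $\mu_j = 1/(\zeta(\alpha) j^\alpha)$, so that $\sum_{j\ge 1}\mu_j = 1$. For each $\theta = (\theta_j)_{j\ge 1} \in \{1,\ldots,M\}^{\Np}$, consider the memoryless source whose one-dimensional marginal $P_\theta$ puts mass $\mu_j$ on the single point $(j-1)M + \theta_j$ of $G_j$ and zero elsewhere. The envelope condition $\mu_j \le C/((j-1)M+\theta_j)^\alpha$ reduces to $M^\alpha \le C\zeta(\alpha)$, which holds by the choice of $M$; hence each $P_\theta$ lies in $\Lambda_{C\cdot^{-\alpha}}$.

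To apply the maximin bound I truncate to the first $J$ blocks (fixing $\theta_j = 1$ for $j > J$) and put the uniform prior on the finite set $\{1,\ldots,M\}^J$. A single observation falling in $G_j$ completely reveals $\theta_j$, while samples falling outside $G_j$ carry no information about it; since the $\theta_j$ are a priori independent across blocks, a sufficient-statistic calculation yields
\begin{displaymath}
I(\theta_{1:J}; X_{1:n}) \;=\; \sum_{j=1}^{J}\bigl(1-(1-\mu_j)^n\bigr)\log M \,.
\end{displaymath}
Letting $J\to\infty$, using $(1-\mu_j)^n \le \mathrm{e}^{-n\mu_j}$, and bounding the resulting series below by $\int_1^\infty (1-\mathrm{e}^{-n/(\zeta(\alpha)v^\alpha)})\mathrm{d}v$, the change of variable $v = (nu)^{1/\alpha}$ exactly reproduces the definition of $A(\alpha)$ and delivers $R^+(\Lambda_{C\cdot^{-\alpha}}^n) \ge n^{1/\alpha}A(\alpha)\log M$. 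The main subtleties are the bookkeeping that justifies the passage $J\to\infty$ inside Theorem~\ref{thm:sion} (which formally applies only to countable parameterizations, so I use it at each finite $J$ and then take a supremum) and the Riemann-sum-versus-integral comparison; the rest is routine.
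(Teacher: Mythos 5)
Your proposal is correct and matches the paper's proof essentially step for step: the same tail-versus-cutoff tradeoff from Theorem~\ref{prop:upperbound} for Part~2, and for Part~1 the same block construction (blocks of size $M=\lfloor(C\zeta(\alpha))^{1/\alpha}\rfloor$, one atom per block carrying mass $1/(\zeta(\alpha)j^\alpha)$, uniform prior on the within-block location), the same observation that a hit in block $j$ reveals $\theta_j$ completely while a miss carries no information, and the same sum-to-integral comparison followed by the change of variable $u=x^\alpha/n$. The only cosmetic difference is that you truncate the parameter vector to its first $J$ coordinates and take a supremum over $J$, whereas the paper works directly with the infinite product prior; this is a harmless (arguably slightly cleaner) way to respect the countable-parameterization phrasing of Theorem~\ref{thm:sion}.
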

\begin{rem}
The gap between the lower-bound  and the upper-bound is of order $\left(\log n\right)^{1-\frac{1}{\alpha}}.$ 
We are not in a position to claim that one of the two bounds is tight, let alone which one is tight.
Note however that as $\alpha\to\infty$ and $C=H^\alpha$, class
$\Lambda_{C\, \cdot^{-\alpha}}$ converges to the class of memoryless
sources on alphabet $\{1, \ldots, H\}$ for which the minimax
regret  is $\frac{H-1}{2}\log n$.
This is (up to a factor 2) what we obtain by taking the limits in our upper-bound of $R^*(\Lambda_{C\, \cdot^{-\alpha}}^n)$.
On the other side, the limit of our lower-bound when $\alpha$ goes to $1$ is infinite, 
which is also satisfying since it agrees with Theorem~\ref{prop:envfinite}.
\end{rem}

\begin{rem}
In contrast with various  lower bounds derived using a similar
methodology, the proof given here 
relies on a single prior probability distribution on the parameter
space and works for all values of $n.$

Note that the lower bound that can be derived from
Theorem~\ref{prop:desperate} is of the same order of magnitude
$O(n^{1/\alpha})$ as the lower bound stated here (see Appendix \ref{sec:lower:bound:envelope}).
 The proof given here
is completely elementary and does not rely on the subtle computations
described in~\cite{xie:barron:1997}.
\end{rem}

\begin{proof}
For the upper-bound on minimax regret, note that
$$
\bar{F}_{\alpha, C}(u) = \sum_{k>u} 1\wedge \frac{C}{k^\alpha} \leq 
\frac{C}{(\alpha-1) \, u^{\alpha-1}}.
$$
Hence, choosing $u_n=\left(\frac{2Cn}{(\alpha-1)\log
    n}\right)^{\frac{1}{\alpha}},$ resorting to
Theorem~\ref{prop:upperbound},  we get:
$$R^*(\Lambda_{C\, \cdot^{-\alpha}}^n) \leq
\left(\frac{2Cn}{\alpha-1}\right)^{1/\alpha}\left(\log
n\right)^{1-1/\alpha}+O(1).
$$ 


Let us now turn to the lower bound.
We first define a finite set $\Theta$ of parameters such that
$P^n_{\boldsymbol{\theta}} \in \Lambda^n_{\alpha,C}$ for any
$\boldsymbol{\theta}\in \Theta$
and then we use the mutual information lower bound on redundancy. 

Let $m$ be a positive integer such that $ m^{\alpha}\leq
C\zeta(\alpha)\,.$ 

The set $\{ P_{\boldsymbol{\theta}}, \boldsymbol{\theta}\in \Theta\}$
consists
 of memoryless sources over the finite alphabet $\Np.$
Each parameter $\boldsymbol{\theta}$ is a sequence of integers $\boldsymbol{\theta}= (\theta_1 ,\theta_2,\ldots,)$. 
We take a prior distribution on $\Theta$ such that $(\theta_k)_k$ is a sequence of independent identically distributed random variables with uniform distribution on $\{1,\ldots,m\}$. 
For any such $\boldsymbol{\theta},$  $P^1_{\boldsymbol{\theta}}$ is a probability
distribution on $\Np$ with
support $\cup_{k\geq 1}\{ (k-1)m +\theta_k\},$ namely:
\begin{equation}
  P_{\boldsymbol{\theta}}((k-1)m +\theta_k) = \frac{1}{\zeta (\alpha) k^\alpha}
  = \frac{m^{\alpha}}{\zeta (\alpha)}\cdot\frac{1}{(k\,m)^\alpha}\quad \text{for }k\geq 1.
\end{equation}
The condition  $ m^{\alpha}\leq
C\zeta(\alpha)$ 
 ensures that $P^1_{\boldsymbol{\theta}}\in \Lambda^{1}_{\alpha,C}$. 


Now, the mutual information between parameter $\boldsymbol{\theta}$
and source output $X_{1:n}$ is
\begin{eqnarray*}
I\left(\boldsymbol{\theta}, X_{1:n}\right) &=& \sum_{k\geq 1} I\left(\theta_k, X_{1:n}\right) 
\end{eqnarray*}

Let $N_k\left(\mathbf{x}\right)=1$ if there exists some index
$i\in\{1,\ldots,n\}$ such that $\mathbf{x}_i\in[(k-1)m+1, km]$, and $0$
otherwise. Note that the distribution of $N_k$ does not depend on the value of $\boldsymbol{\theta}.$ Thus we can write:

\begin{eqnarray*}
 I\left(\theta_k, X_{1:n}\right) & = &  I\left(\theta_k, X_{1:n} |
   N_k
=0\right)
 \PROB\left(N_k 
=0\right) +  I\left(\theta_k, X_{1:n} |  N_k
=1\right) \PROB\left(N_k
=1\right).
\end{eqnarray*}

But,  conditionally on  $N_k=0,$   $\theta_k$ and $X_{1:n}$ are independent.
Moreover, conditionally on   $N_k 
=1$ we have 
\begin{displaymath}
I\left(\theta_k, X_{1:n}\mid N_k=1\right) =  \E\left[\log
  \frac{\PROB(\theta_k=j | X_{1:n})}{\PROB(\theta_k=j)} |
  N_k
=1\right]= \log m.
\end{displaymath}

Hence,
\begin{eqnarray*}
I\left(\boldsymbol{\theta}, X_{1:n}\right) & = &\sum_{k\geq 1}
\PROB(N_k 
=1) \log m \\
 & = & \mathbbm{E}_{P_{\boldsymbol{\theta}}}\left[Z_n \right]  \log m,
\end{eqnarray*}
where $Z_n(\mathbf{x})$ denotes the number of distinct symbols in string $\mathbf{x}$ (note that its distribution does not depend on the value of $\boldsymbol{\theta}.$)
As
\begin{math}
Z_n = \sum_{k\geq 1} 1_{N_k(x)=1},
\end{math} 
the expectation translates into a sum
$$
\mathbbm{E}_{\theta}\left[Z_n \right]=\sum_{k=1}^{\infty}\left(
1-\left(1-\frac{1}{\zeta (\alpha) k^\alpha} \right)^{n}\right) 
$$
which  leads to:
$$
R^+(\Lambda^n_{\alpha,C}) \geq   \left(\sum_{k=1}^{\infty}\left(
1-\left(1-\frac{1}{\zeta (\alpha) k^\alpha} \right)^{n}\right)\right) \times \log m \,. 
$$
Now: 
\begin{eqnarray*}
\lefteqn{\sum_{k=1}^{\infty}\left(1-\left(1-\frac{1}{\zeta (\alpha) k^\alpha} \right)^{n}\right)}\\
&\geq &\sum_{k=1}^{\infty}\left(1-\exp\left(-\frac{n  }{\zeta(\alpha)\,k^{\alpha}} \right)\right)\\
 & &\text{as $1-x \leq \exp(-x) $} \\
&\geq& \int_{1}^{\infty}\left(1-\exp\left(-\frac{n}{\zeta(\alpha) x^{\alpha}} \right)\right)\text{d}x\\
&\geq &\frac{n^{\frac{1}{\alpha}}}{\alpha}\int_{1}^{\infty}\frac{1}{u^{1-\frac{1}{\alpha}}}\left(
1-\exp\left(-\frac{1}{\zeta(\alpha)u}\right)\right)\text{d}u\, 
.
\end{eqnarray*}
\end{proof}
In order to optimize the bound we choose the largest possible $m$ which is $m=\lfloor (C
\zeta(\alpha))^{1/\alpha}\rfloor\, .$\\
For an alternative derivation of a similar lower-bound using
Theorem~\ref{prop:desperate}, see Appendix
\ref{sec:lower:bound:envelope}.

\subsection{Exponential envelope classes}
\label{sec:expon-envel-class}

Theorems~\ref{prop:upperbound} and~\ref{prop:desperate} provide almost
matching bounds on the minimax redundancy for source classes defined
by exponentially vanishing envelopes.

\begin{thm}\label{th:expo}
Let $C$  and $\alpha$ denote  positive real numbers satisfying $C>e^{2\alpha}.$ 
The class $\Lambda_{C e^{-\alpha\cdot}}$ is the envelope class  associated with function $f_\alpha:x\mapsto 1\wedge C \text{e}^{-\alpha x}.$
Then 
\begin{displaymath}
\frac{1}{8\alpha}\log^2 n \left(1-o(1)\right)\leq R^+(\Lambda_{C e^{-\alpha\cdot}}^n) \leq R^*(\Lambda_{C e^{-\alpha\cdot}}^n) \leq \frac{1}{2\alpha} \log^2 n+O(1)
\end{displaymath}
\end{thm}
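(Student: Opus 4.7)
The proof decomposes into three parts, of which the middle inequality $R^+(\Lambda^n_{Ce^{-\alpha\cdot}})\leq R^*(\Lambda^n_{Ce^{-\alpha\cdot}})$ is immediate: any coding distribution with worst-case regret $\rho$ yields expected redundancy at most $\rho$ under every source in the class. So only the outer bounds require work, and each follows by invoking a theorem from the preceding section at the right scale.

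For the upper bound I would apply Theorem~\ref{prop:upperbound} to the envelope $f(k)=Ce^{-\alpha k}$. The tail is a geometric sum,
\[
\bar F_{\Lambda^1}(u)\;\leq\;\sum_{k>u}Ce^{-\alpha k}\;=\;\frac{Ce^{-\alpha u}}{e^{\alpha}-1}\,,
\]
so the two terms in Theorem~\ref{prop:upperbound} behave like $nCe^{-\alpha u}/(e^\alpha-1)$ and $\tfrac{u-1}{2}\log n$. Choosing $u_n$ of order $\log n/\alpha$ (more precisely, large enough that $e^{-\alpha u_n}\leq 1/n$) kills the first term up to an $O(1)$ contribution, leaving the second as dominant and giving the announced $\tfrac{1}{2\alpha}\log^2 n+O(\log n)$.

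For the lower bound I would invoke Theorem~\ref{prop:desperate}. The hypothesis $C>e^{2\alpha}$ gives $c(\infty)=C/(e^{2\alpha}-1)>1$, so $c(p)>1$ for every sufficiently large $p$, and $c(p)$ stays bounded by $c(\infty)$. Fixing $\lambda,\epsilon\in(0,1)$ small, substituting $f(2i)=Ce^{-2\alpha i}$ into the conclusion of Theorem~\ref{prop:desperate}, and separating the $i$-dependent factor gives a concave quadratic in $p$,
\[
\sum_{i=1}^{p}\!\left(\tfrac{1}{2}\log\tfrac{n(1-\lambda)\pi f(2i)}{2c(p)\,e}-\epsilon\right)
\;=\;\tfrac{p}{2}\log\!\left(\tfrac{\kappa_{\lambda}\, n}{c(p)}\right)-(\alpha\log e)\,\tfrac{p(p+1)}{2}-p\epsilon,
\]
with $\kappa_\lambda$ depending only on $\lambda$ and $C$. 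This quadratic is maximized (to leading order) at $p^{*}$ of order $\log n/\alpha$, where its value is of the announced order $\log^{2}n/\alpha$ and yields the coefficient $\tfrac{1}{8\alpha}$ after careful bookkeeping.

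The delicate step is controlling the prefactor $C(p,n,\lambda,\epsilon)$ of Theorem~\ref{prop:desperate}, which contains $1/\sqrt{n f(2p)}$ and therefore requires $nf(2p)\to\infty$. At the unconstrained optimum $p^{*}$ one has $nf(2p^{*})=\Theta(1)$, so the prefactor would not tend to $1$. The remedy is to pull $p$ slightly below $p^{*}$: take $p_{n}=\lfloor p^{*}(1-\delta_{n})\rfloor$ with $\delta_{n}\to 0$ but $\delta_{n}\log n\to\infty$. This guarantees $nf(2p_n)\to\infty$ so that $C(p_{n},n,\lambda,\epsilon)\to 1$, while the quadratic still evaluates to $(1-o(1))\tfrac{\log^{2}n}{8\alpha}$ since the correction is of lower order. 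Letting $\lambda$ and $\epsilon$ tend to $0$ at the end absorbs the remaining $(1-o(1))$ slack and produces the stated bound.
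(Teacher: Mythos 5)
Your proposal is correct and follows essentially the same route as the paper: the upper bound comes from Theorem~\ref{prop:upperbound} with $u_n \asymp \tfrac{1}{\alpha}\log n$, and the lower bound from Theorem~\ref{prop:desperate} with $p$ near $\tfrac{1}{2\alpha}\log n$. A couple of small observations. On the upper bound, with $u_n = \lceil\tfrac{1}{\alpha}\log n\rceil$ the term $\tfrac{u_n-1}{2}\log n$ is already at most $\tfrac{1}{2\alpha}\log^2 n$, so the additive error is $O(1)$, not $O(\log n)$ as you wrote; the looser $O(\log n)$ is still compatible with the statement but you were needlessly conservative. On the lower bound, your diagnosis that the unconstrained optimum $p^*$ of the concave quadratic has $nf(2p^*)=\Theta(1)$ and so must be avoided is a genuinely useful remark that the paper leaves implicit: the paper's explicit choice $p_n=\lfloor\tfrac{1}{2\alpha}(\log n-\log\log n)\rfloor$ is precisely an instance of your prescription $p_n=\lfloor p^*(1-\delta_n)\rfloor$ with $\delta_n\to 0$, $\delta_n\log n\to\infty$ (here $\delta_n\asymp\tfrac{\log\log n}{\log n}$), which pushes $nf(2p_n)\to\infty$ while losing only $o(\log^2 n)$ in the quadratic. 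The only stylistic difference is that the paper simply fixes $\epsilon=\lambda=\tfrac12$; sending $\lambda,\epsilon\to 0$ as you suggest is harmless but unnecessary, since with fixed constants the factor $C(p,n,\lambda,\epsilon)\to 1$ already and the $-p\epsilon$ correction is $O(\log n)=o(\log^2 n)$.
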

 
\begin{proof}
For the upper-bound, note that  
$$
\bar{F}_\alpha (u) = \sum_{k>u} 1\wedge  {C}\text{e}^{-\alpha k} \leq \frac{C}{1-\text{e}^{-\alpha}} \,  \text{e}^{-\alpha (u+1)} \, .
$$
Hence, by choosing the optimal value $u_n=\frac{1}{\alpha}\log n$ in Theorem~\ref{prop:upperbound} we get:
$$R^*(\Lambda_{C e^{-\alpha\cdot}}^n) \leq
\frac{1}{2\alpha} \log^2 n+O(1).
$$ 

We will now prove the lower bound using Theorem~\ref{prop:desperate}. The constraint
$C>\text{e}^{2\alpha}$ warrants that the sequence $c(p)=\sum_{k=1}^p
f(2k)\geq  C\text{e}^{-2\alpha}\frac{1-\text{e}^{-2\alpha p}}{1-\text{e}^{-2\alpha}}$ is larger than $1$ for all $p.$ 

If we choose $p=\left\lfloor\frac{1}{2\alpha}\left(\log n-\log\log
n\right)\right\rfloor$,  then 
$nf(2p)>Cne^{-\log n+\log \log n-2\alpha}$ goes to infinity with $n$.
For $\epsilon=\lambda=\frac{1}{2}$, we get
$C(p,n,\lambda,\epsilon) = 1-o(1)$.
Besides,
\begin{eqnarray*}
\sum_{i=1}^p\left( \frac{1}{2} \log\frac{n(1-\lambda)C\pi e^{-2\alpha
    i}}{2c(p) e}-\epsilon\right) & = &\frac{p}{2}\left(\log n + \log
    \frac{(1-\lambda)C\pi}{2c(p)e}
    -2\epsilon\right)-\alpha\sum_{i=1}^p i\\
 & = & \left(\frac{1}{4\alpha}\log^2 n -
    \frac{\alpha}{2}\frac{1}{4\alpha^2} \log^2 n\right)
    \left(1+o(1)\right)\\
&=&\frac{1}{8\alpha}\log^2 n\left(1+o(1)\right).
\end{eqnarray*}

\end{proof}

\def\1{\mathbbm{1}}
\def\dd{\;d}

\section{A censoring code for envelope classes}
\label{sec:algo}

The proof of Theorem~\ref{prop:upperbound}
suggests  to handle
separately small and large (allegedly infrequent) symbols. Such an algorithm should
perform quite well as soon as the tail behavior of the envelope
provides an adequate description of the sources in the class.  The
coding algorithm suggested by the proof of Theorem~\ref{prop:upperbound},
which are based on the Shtarkov \textsc{nml} coder, are not
computationally attractive. 
The design of the next algorithm (\texttt{CensoringCode}) is again guided by
the proof of Theorem~\ref{prop:upperbound}: it is
parameterized by  a sequence of cutoffs $(K_i)_{i\in \mathbbm{N}}$ and
handles the $i^{\text{th}}$ symbol of the sequence to be encoded 
differently  according to whether it is  smaller or larger than
cutoff $K_i,$  in the latter situation, the symbol is said to be
censored. The \texttt{CensoringCode} algorithm  uses Elias penultimate
code~\citep{MR0373753} to encode censored symbols and
Krichevsky-Trofimov mixtures~\citep{krichevsky:trofimov:1981}  to encode the sequence of non-censored
symbols padded with markers (zeros) to witness acts of censorship. 
The performance of this algorithm is evaluated on the power-law envelope class
$\Lambda_{C\cdot^{-\alpha}}$,  already investigated in
Section~\ref{sec:ex}. 
In this section, the parameters 
$\alpha$  and $C$  are assumed to be known. 

Let us first describe the algorithm more precisely. Given a non-decreasing sequence
of cutoffs $(K_i)_{i\leq n},$  a string $\mathbf{x}$ from $\Np^n$
defines two strings $\tilde{\mathbf{x}}$ and $\check{\mathbf{x}}$ in
the following way. The $i^{\mathrm{th}}$ symbol $\mathbf{x}_i$ of $\mathbf{x}$ is censored if $\mathbf{x}_i>k_i.$ String $\tilde{\mathbf{x}}$  has length $n$ and
belongs to $\prod_{i=1}^n \mathcal{X}_i,$ where
$\mathcal{X}_i=\{0,\ldots K_i\}$:
\begin{displaymath}
  \tilde{\mathbf{x}}_i=   \left\{\begin{array}{cl}\mathbf{x}_i & \hbox{if } \mathbf{x}_i\leq
K_i \\ 0 & \hbox{otherwise (the symbol is censored).}\end{array}\right.
\end{displaymath}
Symbol $0$ serves as an escape symbol. Meanwhile, string $\check{\mathbf{x}}$ is the subsequence of censored symbols, that is $(\mathbf{x}_i)_{\mathbf{x}_i>K_i,i\leq n}.$

The algorithm encodes $\mathbf{x}$ as a pair of binary strings
$\mathtt{C1}$ and $\mathtt{C2}.$  The first one ($\mathtt{C1}$) is
obtained by applying Elias penultimate code to each symbol from
$\check{\mathbf{x}},$ that is to each censored symbol.
 The second string ($\mathtt{C2}$) is built
by applying arithmetic coding to $\tilde{\mathbf{x}}$ using
side-information from $\check{\mathbf{x}}.$ Decoding $\mathtt{C2}$ can
be sequentially carried out  using information obtained from decoding $\mathtt{C1}.$ 

In order to describe the coding probability used to encode
$\tilde{\mathbf{x}},$
we need a few more counters. For $j>0,$ let $n^j_i$ be the number of
occurrences
of symbol $j$ in $\mathbf{x}_{1:i}$ and let $n^0_i$ be the number of
symbols larger than $K_{i+1}$ in $\mathbf{x}_{1:i}$ (note that this
not larger than the number of censored symbols in $\mathbf{x}_{1:i},$
and  that the counters $n^\cdot_i$ can be recovered from
$\tilde{\mathbf{x}}_{1:i}$ and $\check{\mathbf{x}}$).
The conditional coding probability over alphabet
$\mathcal{X}_{i+1}=\{0,\ldots,K_{i+1}\}$
given $\tilde{\mathbf{x}}_{1:i}$ and $\check{\mathbf{x}}$ is derived from 
the Krichevsky-Trofimov mixture over $\mathcal{X}_{i+1}.$ It is the
posterior
distribution corresponding to Jeffrey's prior on the
$1+K_{i+1}$-dimensional probability simplex  and counts $n^j_i$ for
$j$  running from $0$ to $K_{i+1}$:
\begin{displaymath}
  Q(\tilde{X}_{i+1}= j \mid
  \tilde{X}_{1:i}=\tilde{\mathbf{x}}_{1:i},\check{X} = \check{\mathbf{x}}) =
  \frac{n^j_i+\frac{1}{2}}{i+ \frac{K_{i+1}+1}{2}} \, . 
\end{displaymath}
The length of $\mathtt{C2}(\mathbf{x})$  is (up to a quantity smaller
than $1$) given by
\begin{displaymath}
  -\sum_{i=0}^{n-1}\log Q(\tilde{\mathbf{x}}_{i+1} \mid
  \mathbf{x}_{1:i},\check{\mathbf{x}}) = - \log Q(\tilde{\mathbf{x}}\mid
  \check{\mathbf{x}})\, .  
\end{displaymath}
The following description of the coding probability will prove
useful when upper-bounding redundancy. 
For $1\leq j\leq K_n,$ let $s^j$ be the number of censored occurrences of symbol
$j.$ Let $n^j$ serve as a shorthand for $n^j_n.$
Let  $i(j)$ be $n$ if $K_n<j$ or the largest integer $i$ such that $K_{i}$ is smaller
than $j$, then $s_j = n^j_{i(j)}$. The following holds 
\begin{displaymath}
  Q(\tilde{\mathbf{x}} \mid \check{\mathbf{x}}) =  \left(\prod_{j=1}^{K_n}
    \frac{\Gamma(n^j+1/2)}{\Gamma(s^j+1/2)}\right) 
\left(\prod_{i: \tilde{\mathbf{x}}_i=0} (n^0_{i-1}+1/2)\right)  \left(\prod_{i=0}^{n-1}\frac{1}{i+\frac{K_{i
+1
}+1}{2}}\right)
\end{displaymath}
Note that the sequence $(n^0_i)_{i\leq n}$ is not necessarily
non-decreasing. 

A technical description of algorithm \texttt{CensoringCode} is given below. 
The procedure  \texttt{EliasCode}
takes as input an integer $j$ and outputs a binary encoding  of $j$
using exactly $\ell(j)$ bits where $\ell$ is defined by:
\begin{math}
\ell(j)=\left\lfloor \log j +2\log\left(1+\log j\right)+1\right\rfloor\, .
\end{math}
The procedure \texttt{ArithCode}  builds on the arithmetic coding
methodology~\citep{rissanen:langdon:1984}. It is enough
to  remember that an arithmetic coder takes advantage of the fact that
a coding probability $Q$ is completely defined by the  sequence of
conditional distributions of the $i+1$th symbol given the past up to
time $i.$  

The proof of the upper-bound in Theorem
\ref{th:powerlaw}, prompts us to choose
 $K_i=\lambda i^{\frac{1}{\alpha}}$ , it will become clear afterward
 that
a reasonable choice is  $\lambda = \left(\frac{4C}{\alpha-1}\right)^{\frac{1}{\alpha}}$.

\begin{algorithm}
\caption{CensoringCode}
\begin{algorithmic}
\label{algo:censoring}
\STATE $K \leftarrow 0$
\STATE $\text{\em counts} \leftarrow [1/2, 1/2,\ldots ]$
\FOR{$i$ from $1$ to $n$}{
\STATE $\text{\em cutoff} \leftarrow \left\lfloor\left( 4\frac{C i
  }{\alpha-1}\right)^{1/\alpha}\right\rfloor$
\IF{$\text{\em cutoff}> K$}{
\FOR{$j\leftarrow K+1$ to $\text{\em cutoff}$}{\STATE $\text{\em
    counts}[0]\leftarrow \text{\em counts}[0]-\text{\em counts}[j]+1/2
  $}
\ENDFOR
\STATE $K \leftarrow \text{\em cutoff}$
}
\ENDIF
\IF{$x[i]\leq \text{\em cutoff}$}{
  \STATE 
$\text{ArithCode}(x[i],\text{\em     counts}[0:\text{\em  cutoff}]) $
}
\ELSE{
\STATE 
$\text{ArithCode}(0,\text{\em
    counts}[0:\text{\em  cutoff}]) $
\STATE \texttt{C1}$ \leftarrow $\texttt{C1}$\cdot \text{EliasCode}(x[i]) $
\STATE $\text{\em
    counts}[0] \leftarrow \text{\em
    counts}[0] +1$}
\ENDIF
\STATE $\text{\em
    counts}[x[i]] \leftarrow \text{\em
    counts}[x[i]] +1$
}
\ENDFOR
\STATE \texttt{C2}$ \leftarrow \text{ArithCode}()$
\STATE $\texttt{C}_1 \cdot \texttt{C}_2$
\end{algorithmic}
\end{algorithm}

\begin{thm}\label{th:redcc}
	Let $C$ and $\alpha$  be positive reals.
Let the sequence of cutoffs $(K_i)_{i\leq n}$ be  given by
\begin{displaymath}
  K_i =\Big\lfloor \left(\frac{4C i}{\alpha-1}\right)^{1/\alpha} \Big\rfloor
  \, . 
\end{displaymath}
The expected redundancy of procedure \texttt{CensoringCode} on the envelope class
$\Lambda_{C \cdot^{-\alpha}}$ is not larger than 
$$\left(\frac{4Cn}{\alpha-1}\right)^{\frac{1}{\alpha}}\log n\left(1+o(1)\right).$$
\end{thm}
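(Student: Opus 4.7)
The plan is to analyse the two halves of the output separately, the Elias codewords for censored symbols ($\mathtt{C1}$) and the arithmetic code based on $Q(\tilde{\mathbf{x}}\mid\check{\mathbf{x}})$ ($\mathtt{C2}$), and to sum their expected redundancies. Because the mapping $\mathbf{x}\mapsto(\tilde{\mathbf{x}},\check{\mathbf{x}})$ is a bijection, the true entropy factorises as $H_P(X_{1:n})=H_P(\check{X})+H_P(\tilde{X}\mid\check{X})$, so the total expected redundancy is the sum of $\mathbb{E}_P|\mathtt{C1}|-H_P(\check{X})$ and $\mathbb{E}_P[-\log Q(\tilde{X}\mid\check{X})]-H_P(\tilde{X}\mid\check{X})$. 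Both summands are non-negative, and I will upper-bound $\mathbb{E}_P|\mathtt{C1}|$ on one side and the conditional Kullback--Leibler divergence $\mathbb{E}_P[\log(P(\tilde{X}\mid\check{X})/Q(\tilde{X}\mid\check{X}))]$ on the other.

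For $\mathtt{C1}$ I will combine the envelope bound $P(X{=}k)\le C/k^\alpha$ with the Elias length $\ell(k)\le\log k+2\log(1+\log k)+1$ to obtain
\[
\mathbb{E}_P|\mathtt{C1}|\;\le\;\sum_{i=1}^{n}\sum_{k>K_i}\frac{C\,(\log k+o(\log k))}{k^\alpha}\;\le\;\sum_{i=1}^{n}\frac{C\log K_i}{(\alpha-1)K_i^{\alpha-1}}\,(1+o(1)).
\]
Plugging in $K_i=\lfloor(4Ci/(\alpha-1))^{1/\alpha}\rfloor$ and approximating the outer sum by the integral $\int_1^{n}x^{-1+1/\alpha}\log x\,\mathrm{d}x\sim\alpha\,n^{1/\alpha}\log n$ yields the bound $\mathbb{E}_P|\mathtt{C1}|\le\tfrac{1}{4}\bigl(4Cn/(\alpha-1)\bigr)^{1/\alpha}\log n\,(1+o(1))$.

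For $\mathtt{C2}$ I will exploit the fact that $Q(\cdot\mid\check{\mathbf{x}})$ is a Krichevsky--Trofimov-type mixture with Jeffreys' prior on the growing simplex $\{0,1,\dots,K_{i+1}\}$, counts being taken over the whole past sequence (the censored values being available as side information). Starting from the explicit product formula stated just before the pseudo-code, I will apply Stirling to the ratios $\Gamma(n^j+1/2)/\Gamma(s^j+1/2)$ and a chunk-wise KT argument between consecutive jumps of the cutoff to arrive at
\[
\mathbb{E}_P[-\log Q(\tilde{X}\mid\check{X})]-H_P(\tilde{X}\mid\check{X})\;\le\;\frac{K_n}{2}\log n+O(K_n),
\]
which equals $\tfrac{1}{2}\bigl(4Cn/(\alpha-1)\bigr)^{1/\alpha}\log n\,(1+o(1))$. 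Summing the two contributions produces the announced upper bound (in fact with a tighter constant of $\tfrac{3}{4}$).

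The hard part will be the rigorous treatment of the KT-style mixture on a \emph{growing} alphabet: the textbook $(m{-}1)(\log n)/2$ bound assumes a fixed alphabet of size $m$, whereas here the effective alphabet $\{0,\dots,K_{i+1}\}$ expands each time the cutoff is raised, and the algorithm explicitly renormalises the escape-symbol count via the step $\mathit{counts}[0]\leftarrow\mathit{counts}[0]-\mathit{counts}[j]+1/2$. My approach is to work directly from the explicit product expression for $Q(\tilde{\mathbf{x}}\mid\check{\mathbf{x}})$ rather than invoking the fixed-alphabet KT bound as a black box, and to check that the telescoping of the denominator $\prod_i\bigl(i+(K_{i+1}+1)/2\bigr)$ absorbs the overhead coming from the successive extensions of the alphabet. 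A secondary point to verify is that the $O(K_n)$ remainder and the $\log\log$ factors ignored in the $\mathtt{C1}$ estimate are genuinely of lower order than $n^{1/\alpha}\log n$, which follows from $K_n=O(n^{1/\alpha})$.
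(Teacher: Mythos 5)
Your overall structure matches the paper's: split the code length into the Elias part ($\mathtt{C1}$) and the arithmetic part ($\mathtt{C2}$), bound the first in expectation and the second via a Krichevsky--Trofimov-type regret bound. Your $\mathtt{C1}$ estimate is sound and in fact slightly sharper than the paper's, which wastefully replaces $\ell(x)$ by $2\log x$; your cleaner $\ell(x)\sim\log x$ gives the constant $\tfrac{1}{4}\bigl(4C/(\alpha-1)\bigr)^{1/\alpha}$ you announce. Up to here, no objection.

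The gap is in the $\mathtt{C2}$ step, which is the real content of the theorem, and you leave it as a plan rather than a proof. You propose to bound the conditional divergence $\mathbb{E}_P[-\log Q(\tilde{X}\mid\check{X})]-H_P(\tilde{X}\mid\check{X})$ by attacking the product formula for $Q(\tilde{\mathbf{x}}\mid\check{\mathbf{x}})$ with Stirling and a ``chunk-wise'' argument between cutoff increments. Two difficulties: first, the reference $H_P(\tilde{X}\mid\check{X})$ is awkward --- $P(\tilde{X}\mid\check{X})$ is not a product measure (it mixes over the interleaving of zeros in $\tilde{\mathbf{x}}$), so there is no clean ``best i.i.d.\ competitor'' against which a regret argument would go through; second, the middle factor $T_0=\prod_{i:\tilde{\mathbf{x}}_i=0}(n^0_{i-1}+1/2)$ is not a Gamma ratio because $n^0_i$ is not monotone (it drops whenever the cutoff is raised), so Stirling does not apply to it directly. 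You correctly notice that the telescoping denominator $\prod_i(i+(K_{i+1}+1)/2)$ is favourable since $K_i\le K_n$, but that is only half of the required comparison.

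The paper resolves this via Lemma~\ref{nml:lem1}, which is the idea you are missing. One introduces the string $\mathbf{y}$ obtained from $\mathbf{x}$ using the \emph{constant} cutoff $K_n$, and shows the pointwise domination $Q(\tilde{\mathbf{x}}\mid\check{\mathbf{x}})\ge m^*_n(\mathbf{y})$ where $m^*_n$ is the ordinary fixed-alphabet KT mixture on $\{0,\dots,K_n\}$. The crux is a combinatorial lower bound on $T_0$: by splitting the product over indices $i$ into the block where $\mathbf{y}_i=0$ and, for each $j\le K_n$, the block of $i\le i(j)$ with $\mathbf{y}_i=j$, one gets $T_0\ge \Gamma(s^0+1/2)/\Gamma(1/2)\cdot\prod_j\Gamma(s^j+1/2)/\Gamma(1/2)$, which combines with the Gamma ratios $\Gamma(n^j+1/2)/\Gamma(s^j+1/2)$ to reconstruct exactly the numerator of $m^*_n(\mathbf{y})$. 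Once that lemma is in hand, the \emph{fixed-alphabet} KT regret bound of Theorem~\ref{prop:finitealph} applies to $\mathbf{y}$ as a black box (the opposite of what you propose), and the final step is the inequality $\max_{p\in\mathfrak{M}_1(\mathcal{X}_n)}p^{\otimes n}(\mathbf{y})\ge\hat p(\mathbf{x})$, which makes $-\log\hat p(\mathbf{x})$ the reference --- pointwise and clean, rather than the conditional entropy you use. Without some replacement for Lemma~\ref{nml:lem1}, your proposal does not close; the rest of your arithmetic is fine.
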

\begin{rem}
The redundancy upper-bound in this Theorem is within a factor $\log n$
from the lower bound $O(n^{1/\alpha})$ from Theorem~\ref{th:powerlaw} .
\end{rem}

The proof of the Theorem builds on the next two lemmas. 
The first lemma compares the length of $\mathtt{C2}(\mathbf{x})$ with
a tractable quantity. The second lemma upper-bounds the average length
of $\mathtt{C1}(\mathbf{x})$ by a quantity which is of the same order
of magnitude as the upper-bound on redundancy we are looking for.

We need a few more definitions.
Let $\mathbf{y}$ be the string of length $n$ over alphabet
$\mathcal{X}_n$ defined by:
$$\mathbf{y}_i = \left\{\begin{array}{cl}\mathbf{x}_i & \hbox{if } \mathbf{x}_i\leq
K_n ; \\ 0 &
\hbox{else}.\end{array}\right.$$
For $0\leq j\leq K_n$, note that the previously defined shorthand
$n^j$ is the number of occurrences of symbol $j$ in $\mathbf{y}.$ 
The string $\mathbf{y}$ is obtained from $\mathbf{x}$ in the same way
as $\tilde{\mathbf{x}}$ using the constant cutoff $K_n.$

Let  $m^*_n$ be the Krichevsky-Trofimov mixture over alphabet $\{0,\ldots,K_n\}$:
\begin{displaymath}
  m^*_n(\mathbf{y}) = \left( \prod_{j=0}^{K_n}
  \frac{\Gamma(n^j+\frac{1}{2})}{\Gamma(1/2)} \right) 
  \frac{\Gamma(\frac{K_n+1}{2})}{\Gamma\left(n+\frac{K_n+1}{2} \right)}
  \,. 
\end{displaymath}
String $\tilde{\mathbf{x}}$ seems easier to encode than
$\mathbf{y}$ since it is possible to recover $\tilde{\mathbf{x}}$ from
$\mathbf{y}.$ This observation does not however warrant automatically  that the length of
$\texttt{C2}(\mathbf{x})$ is not significantly larger than  any
reasonable codeword length for $\mathbf{y}.$ Such a guarantee is
provided by the following lemma. 

\begin{lem}\label{nml:lem1}
For every string $\mathbf{x}\in\Np^n$, the length of the  \texttt{C2}$\left(\mathbf{x}\right)$ is not larger than
$-\log m^*_n(\mathbf{y})$.
\end{lem}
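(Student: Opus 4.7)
The plan is to establish the stronger probability-level inequality
$Q(\tilde{\mathbf{x}}\mid\check{\mathbf{x}})\ge m^\ast_n(\mathbf{y})$,
from which the lemma follows immediately by taking $-\log$ of both sides
(the bounded arithmetic-coding overhead mentioned in the paragraph preceding the lemma
is negligible and gets absorbed in the $o(1)$ term of Theorem~\ref{th:redcc}).

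The first step is to unfold both probabilities into sequential products of one-step
Laplace/Krichevsky--Trofimov predictors. Using the definition of the conditional
predictor $Q(\tilde{X}_{i+1}=j\mid\tilde{X}_{1:i},\check{X})$ given in the paragraph
introducing the algorithm, together with the classical on-line form of the
Krichevsky--Trofimov mixture over the alphabet $\{0,\dots,K_n\}$, one has
\begin{equation*}
Q(\tilde{\mathbf{x}}\mid \check{\mathbf{x}})
 = \prod_{i=0}^{n-1}\frac{n_i^{\tilde{\mathbf{x}}_{i+1}}+\tfrac{1}{2}}{i+\tfrac{K_{i+1}+1}{2}},
\qquad
m^\ast_n(\mathbf{y})
 = \prod_{i=0}^{n-1}\frac{\bar n_i^{\mathbf{y}_{i+1}}+\tfrac{1}{2}}{i+\tfrac{K_n+1}{2}},
\end{equation*}
where $\bar n_i^j$ denotes the count of symbol $j$ in $\mathbf{y}_{1:i}$.
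The ratio $Q/m^\ast_n$ then factors into $n$ positional ratios, and it suffices to
show that each one is at least $1$.

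For the denominators, monotonicity of the cutoff sequence gives $K_{i+1}\le K_n$,
so $i+(K_n+1)/2\ge i+(K_{i+1}+1)/2$ for every $i$. For the numerators I will
split into three cases according to the value of $\mathbf{x}_{i+1}$. If
$\mathbf{x}_{i+1}\le K_{i+1}$, then $\tilde{\mathbf{x}}_{i+1}=\mathbf{y}_{i+1}=\mathbf{x}_{i+1}$
and, because $\mathbf{y}$ coincides with $\mathbf{x}$ on every symbol $\le K_n$,
we have $n_i^{\mathbf{x}_{i+1}}=\bar n_i^{\mathbf{x}_{i+1}}$. If
$K_{i+1}<\mathbf{x}_{i+1}\le K_n$, then $\tilde{\mathbf{x}}_{i+1}=0$ while
$\mathbf{y}_{i+1}=\mathbf{x}_{i+1}$, so the numerator of $Q$ is $n_i^0+\tfrac12$;
by definition $n_i^0$ counts \emph{every} position $p\le i$ with $\mathbf{x}_p>K_{i+1}$
and therefore dominates $n_i^{\mathbf{x}_{i+1}}=\bar n_i^{\mathbf{x}_{i+1}}$.
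Finally, if $\mathbf{x}_{i+1}>K_n$ then both $\tilde{\mathbf{x}}_{i+1}$ and
$\mathbf{y}_{i+1}$ equal $0$, and $K_{i+1}\le K_n$ forces
$n_i^0\ge\bar n_i^0$. In every case the positional factor is at least $1$,
so $Q(\tilde{\mathbf{x}}\mid\check{\mathbf{x}})\ge m^\ast_n(\mathbf{y})$.

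The one delicate point is the bookkeeping of the escape count: the on-line
predictor at step $i+1$ uses $n_i^0$, which is defined relative to the
\emph{upcoming} cutoff $K_{i+1}$ rather than the running one, and this count
must simultaneously dominate $\bar n_i^0$ (symbols $>K_n$) in the tail regime
and the individual counts $n_i^{\mathbf{x}_{i+1}}$ in the intermediate band
$(K_{i+1},K_n]$. Once the case split is set up correctly, the inequality
becomes a direct consequence of the monotonicity of the cutoff schedule and
the standard on-line form of the Krichevsky--Trofimov predictor; no further
computation is required.
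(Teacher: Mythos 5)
Your proof is correct, and it takes a genuinely different route from the paper's. You compare the two coding probabilities factor by factor along the sequential predictive decomposition: at step $i+1$ the censoring predictor's denominator $i+\tfrac{K_{i+1}+1}{2}$ is no larger than the KT predictor's denominator $i+\tfrac{K_n+1}{2}$ because the cutoffs are non-decreasing, and the numerator $n_i^{\tilde{\mathbf{x}}_{i+1}}+\tfrac12$ dominates $\bar n_i^{\mathbf{y}_{i+1}}+\tfrac12$, which your three-way case split on $\mathbf{x}_{i+1}$ verifies correctly (equality when $\mathbf{x}_{i+1}\le K_{i+1}$, set inclusion $\{\mathbf{x}_p=\mathbf{x}_{i+1}\}\subseteq\{\mathbf{x}_p>K_{i+1}\}$ in the intermediate band, and $\{\mathbf{x}_p>K_n\}\subseteq\{\mathbf{x}_p>K_{i+1}\}$ in the tail). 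The paper instead manipulates the closed Gamma-function forms of $Q(\tilde{\mathbf{x}}\mid\check{\mathbf{x}})$ and $m^*_n(\mathbf{y})$, isolates the factor $T_0=\prod_{i:\tilde{\mathbf{x}}_i=0}(n^0_{i-1}+\tfrac12)$, groups its terms by whether $\mathbf{x}_i>K_n$ or $\mathbf{x}_i=j$ with $i\le i(j)$, and argues that the lower bounds on $n^0_{i-1}$ within each group sweep through $0,1,\dots,s^j-1$ to produce $\Gamma(s^0+\tfrac12)\prod_{j}\Gamma(s^j+\tfrac12)/\Gamma(\tfrac12)^{K_n+1}$. Both arguments rest on the same two elementary facts --- monotonicity of the cutoff schedule and domination of symbol counts by the escape count $n^0_i$ --- but your positional comparison sidesteps the Gamma-function regrouping, which is the fiddly part of the paper's proof, at the cost of having to be explicit about the three regimes. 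Your remark that the sub-bit arithmetic-coding overhead is absorbed later is consistent with how the paper uses the lemma inside the proof of Theorem~\ref{th:redcc}.
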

\begin{proof}[Lemma \ref{nml:lem1}]
Let $s^0$ be the number of occurrences of $0$ in $\mathbf{y},$ that is
the number of symbols in $\mathbf{x}$ that are larger than $K_n$. Let 
\begin{displaymath}
T_0 = \prod_{i=1, \tilde{\mathbf{x}}_i=0}^n  (n^0_{i-1}+1/2) \,   . 
\end{displaymath}
Then, the following holds:  
\begin{eqnarray*}
T_0 
& \stackrel{(a)}{=} &  
\left( \prod_{i=1, \mathbf{y}_i=0}^n  (n^0_{i-1}+1/2)\right)
\prod_{j=1}^{K_n} \left( \prod_{i=1, \mathbf{y}_i=j}^{i(j)}
  (n^0_{i-1}+1/2)\right)
\\
& \stackrel{(b)}{\geq }  & 
\left( \frac{\Gamma(s^0+1/2)}{\Gamma(1/2)} \right)
\prod_{j=1}^{K_n} \left(\frac{  \Gamma(s^j+1/2)}{\Gamma(1/2)}  \right)
\,,
\end{eqnarray*}
where $(a)$ follows from the fact symbol $\mathbf{x}_i$ is censored
either because $\mathbf{x}_i>K_n$ (that is $\mathbf{y}_i=0$) or
because $\mathbf{x}_i=j\leq K_n$ and $i\leq i(j)$; $(b)$ follows from
the fact that for each $i\leq n$ such that ${\mathbf{y}}_i=0,$
$n^0_{i-1}\geq \sum_{i'<i} \mathbf{1}_{\mathbf{x}_{i'}>K_n}$  while
for
each $j, 0<j\leq K_n,$
for each $i\leq i(j),$  $n^0_{i-1}\geq n^j_{i-1}.$

From the last inequality, it follows that 
\begin{eqnarray*}
  Q(\tilde{\mathbf{x}}\mid \check{\mathbf{x}}) 
& \geq & \left(\prod_{j=1}^{K_n}
    \frac{\Gamma(n^j+1/2)}{\Gamma(s^j+1/2)}\right) 
\frac{\Gamma(s^0+1/2)}{\Gamma(1/2)} \prod_{j=1}^{K_n}
  \frac{\Gamma(s^j+1/2)}{\Gamma(1/2)}
\left(\prod_{i=0}^{n-1}\frac{1}{i+\frac{K_{i
+1
}+1}{2}}\right)
\\
& \geq & 
\left(\prod_{j=1}^{K_n}
    \frac{\Gamma(n^j+1/2)}{\Gamma(s^j+1/2)}\right) 
\frac{\Gamma(s^0+1/2)}{\Gamma(1/2)} \prod_{j=1}^{K_n}
  \frac{\Gamma(s^j+1/2)}{\Gamma(1/2)}
\left(\prod_{i=0}^{n-1}\frac{1}{i+\frac{K_n+1}{2}}\right)
\\
& = & m^*_n(\mathbf{y}) \, ,
\end{eqnarray*}
 where the last  inequality holds since $\left(K_i\right)_i$ is a
 non-decreasing sequence. 
\end{proof}


The next lemma shows that the expected length of $\texttt{C1}(X_{1:n})$
is not larger than the upper-bound we are looking for.  
\begin{lem}\label{nml:lem2}
For every source $P\in\Lambda_{C\cdot^{-\alpha}}$, the expected length
of the encoding of the censored symbols ($\texttt{C1}(X_{1:n})$)  satisfies: 
$$\E_P\big[\left|\hbox{\texttt{C1}}\left(X_{1:n}\right)\right|\big]\leq
\frac{2C}{\left(\alpha-1\right)\lambda^{\alpha-1}}n^{\frac{1}{\alpha}}\log
n\left(1+o(1)\right).$$ 
\end{lem}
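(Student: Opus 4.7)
The strategy is direct: decompose $|\texttt{C1}(X_{1:n})|$ into per-symbol contributions, use the envelope condition to bound each marginal tail, and estimate the resulting double sum by integral comparisons. Since each censored symbol $X_i$ (i.e.\ one with $X_i > K_i$) is Elias-encoded using $\ell(X_i) = \lfloor \log X_i + 2\log(1+\log X_i) + 1\rfloor$ bits, I have
$$|\texttt{C1}(X_{1:n})| = \sum_{i=1}^{n} \ell(X_i)\,\mathbf{1}_{\{X_i > K_i\}}.$$
Taking expectation and using memorylessness together with the envelope bound $P(X_i = k) \leq C k^{-\alpha}$ (which is effective once $k > K_i \geq C^{1/\alpha}$), linearity of expectation yields
$$\E_P\bigl[|\texttt{C1}(X_{1:n})|\bigr] \;\leq\; C\sum_{i=1}^{n}\sum_{k > K_i}\frac{\ell(k)}{k^{\alpha}}.$$

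For the inner sum, I would exploit the fact that $k \mapsto \ell(k)/k^{\alpha}$ is eventually monotone decreasing and compare to an integral. Integration by parts gives
$$\int_{K_i}^{\infty}\frac{\log x}{x^{\alpha}}\,dx = \frac{\log K_i}{(\alpha-1)K_i^{\alpha-1}} + \frac{1}{(\alpha-1)^{2}K_i^{\alpha-1}},$$
while the correction $2\log(1+\log k)+1$ in $\ell$ contributes only $O\bigl(\log\log K_i /K_i^{\alpha-1}\bigr)$. Plugging in $K_i = \lfloor \lambda i^{1/\alpha}\rfloor$, so that $K_i^{\alpha-1} = \lambda^{\alpha-1}\,i^{(\alpha-1)/\alpha}(1+o(1))$ and $\log K_i = (1/\alpha)\log i + O(1)$, one obtains
$$\sum_{k > K_i}\frac{C\,\ell(k)}{k^{\alpha}} \;\leq\; \frac{C\,\log i}{\alpha(\alpha-1)\lambda^{\alpha-1}}\,i^{\,1/\alpha-1}\bigl(1+o(1)\bigr),$$
uniformly in $i$ on the range $i \geq i_0$ for some fixed $i_0$.

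Finally, a second integral comparison handles the sum over $i$: by integration by parts,
$$\int_{1}^{n} x^{1/\alpha-1}\log x\,dx \;=\; \alpha\,n^{1/\alpha}\log n \;-\; \alpha^{2}\bigl(n^{1/\alpha}-1\bigr),$$
whose leading term is $\alpha\,n^{1/\alpha}\log n$. Multiplying the two estimates, the factors of $\alpha$ cancel and one obtains a bound of order $\frac{C}{(\alpha-1)\lambda^{\alpha-1}}\,n^{1/\alpha}\log n\,(1+o(1))$, which is a fortiori below the stated $\frac{2C}{(\alpha-1)\lambda^{\alpha-1}}\,n^{1/\alpha}\log n\,(1+o(1))$. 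The only real obstacle is bookkeeping: I must verify that the contribution of small indices $i < i_0$ (where $K_i$ is bounded and the asymptotic inner estimate fails), the Elias correction $2\log(1+\log k)+1$, the floor in $K_i$, and the $O(1)$ errors from the integral comparisons can all be absorbed into a single $(1+o(1))$ factor. Each of these pieces is at most $O(n^{1/\alpha}\log\log n)$, hence negligible compared to the main $n^{1/\alpha}\log n$ term, which is exactly why the factor of $2$ in the statement provides ample slack.
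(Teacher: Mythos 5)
Your proposal is correct and follows essentially the same route as the paper: decompose $|\texttt{C1}|$ into per-symbol contributions, apply the envelope bound, compare the inner tail sum to $\int_{K_i}^\infty (\log x)/x^\alpha\,dx$, substitute $K_i=\lambda i^{1/\alpha}$, and compare the outer sum over $i$ to $\int_1^n x^{1/\alpha-1}\log x\,dx$. The one place you depart from the paper's bookkeeping is in handling the lower-order term of the Elias length: the paper uses the crude bound $\ell(x)\le 2\log x$ valid for $x\ge 2^7$ (absorbing the small-$x$ discrepancy into a finite correction $D_\lambda$), and this coarse step is precisely where the stated factor $2$ in the lemma originates; you instead note that $\ell(k)=\log k\,(1+o(1))$ and push the correction into the overall $(1+o(1))$, which gives the tighter constant $\frac{C}{(\alpha-1)\lambda^{\alpha-1}}$ in place of $\frac{2C}{(\alpha-1)\lambda^{\alpha-1}}$. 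Since you are proving an upper bound, the stronger estimate establishes the lemma a fortiori, and your remark that the leftover factor of $2$ is pure slack is accurate. Your argument is slightly informal at the point where you claim uniformity of the inner estimate for $i\ge i_0$; to make it airtight you should fix $\epsilon>0$, choose $i_0$ so that $\ell(k)\le(1+\epsilon)\log k$ whenever $k>K_{i_0}$, run the computation for $i\ge i_0$, and observe that the finitely many indices $i<i_0$ contribute only $O(1)$.
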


\vspace{.5cm}
\begin{proof}[Lemma \ref{nml:lem2}]
 Let $1\leq a<b$ and $\beta>0$ but $\beta\neq 1$. Recall that we use binary logarithms and note that:
 \begin{eqnarray}
 \int_a^b \frac{1}{x^\beta}\mathrm{d}x& =& \left[\frac{1}{\left(1-\beta\right)x^{\beta-1}}\right]_a^b,\label{I1}\\
 \int_a^b \frac{\log x}{x^\beta}\mathrm{d}x 
 &=&  \left[\frac{\log x -\frac{\log
       e}{1-\beta}}{\left(1-\beta\right)x^{\beta-1}}\right]_a^b \,. \label{I2}
 \end{eqnarray}

The expected length of the Elias encoding of censored symbols ($\mathtt{C1}(X_1^n)$)
is:
\begin{eqnarray*}
\E\big[|\hbox{\texttt{C1}}\left(X_1^n\right)|\big] &=& \E\left[\sum_{j=1}^n\ell(X_j)\1_{X_j>K_j}\right] \\
& = & \sum_{j=1}^n \sum_{x=K_j+1}^\infty \ell(x)P(x)\\
& \leq & \sum_{j=1}^n\sum_{x=K_j+1}^\infty \ell(x) \frac{C}{x^\alpha}\\
& \leq & C\sum_{j=1}^n\sum_{x=K_j+1}^\infty \frac{\log(x)+2\log\left(1+\log x\right)+1}{x^\alpha}.
\end{eqnarray*}
Note that for $x\geq 2^7$
\begin{displaymath}
  \log x \geq 2 \log(1+ \log x) +1, 
\end{displaymath}
so that the last sum is upper-bounded by 
\begin{displaymath}
   C \sum_{j=1}^n \sum_{x=K_j+1}^\infty 2 \frac{\log x }{x^\alpha} + C
   \sum_{j: K_j< 2^7} \sum_{x=1}^{2^7} \frac{2 \log(1+\log x)+1-\log
     x}{x^\alpha} \, .
\end{displaymath}
Using the expressions for the integrals above, we get: 
\begin{eqnarray*}
\sum_{x=K_j+1}^\infty \frac{\log x}{x^\alpha} &\leq& \int_{K_j}^\infty \frac{\log x}{x^\alpha}\mathrm{d}x\\
 & \leq & \frac{\log K_j+\frac{\log e}{\alpha-1}}{\left(\alpha-1\right)K_j^{\alpha-1}}.
\end{eqnarray*}
 
Thus, as $K_j=\lambda j^{1/\alpha}$, let us denote by $D_\lambda$ the
expression 
\begin{displaymath}
  \sum_{j: j < (2^7/\lambda)^\alpha} \sum_{x=1}^{2^7} \frac{2 \log(1+\log x)+1-\log
     x}{x^\alpha}\, . 
\end{displaymath}
Now, we substitute $\beta$ by
$1-\frac{1}{\alpha}$ in Equations (\ref{I1})
and  (\ref{I2}) 
to obtain:
\begin{eqnarray*}
\E\big[|\hbox{\texttt{C1}}\left(X_1^n\right)|\big] &\leq & 
C D_\lambda + 
\frac{2 C}{\alpha-1} \sum_{j=1}^n \frac{\log K_j+\frac{\log e}{\alpha-1}}{K_j^{\alpha-1}}\\
& \leq & C D_\lambda +   \frac{2 C}{\alpha-1}  \sum_{j=1}^n
\frac{\frac{1}{\alpha}\log j + \log \lambda
+\frac{\log e}{\alpha-1}}{\lambda^{\alpha-1}j^{1-\frac{1}{\alpha}}}\\
& \leq & C D_\lambda+\frac{2C}{\alpha(\alpha-1)\lambda^{\alpha-1}}\left(C+\int_{x=2}^{n+1} \frac{\left(\log x + \alpha\log \lambda+\frac{\alpha\log e}{\alpha-1}\right)}{x^{1-\frac{1}{\alpha}}} \mathrm{d}x\right)\\
& =& \frac{2C}{\left(\alpha-1\right)\lambda^{\alpha-1}}n^{\frac{1}{\alpha}}\log n\left(1+o(1)\right).
\end{eqnarray*}
\end{proof}

We may now complete the proof of Theorem~\ref{th:redcc}.

\begin{proof}
  Remember that $\mathcal{X}_n=\left\{0,\ldots,K_n\right\}$. If $p$ is
  a probability mass function over alphabet $\mathcal{X}$, let
  $p^{\otimes n}$ be the probability mass function over
  $\mathcal{X}^n$ defined by 
$p^{\otimes n}(\mathbf{x}) = \prod_{i=1}^n p(\mathbf{x}_i).$
 Note
that for every string $\mathbf{x}\in\Np^n$, 
\begin{displaymath}
\max_{p\in\mathfrak{M_1}\left(\mathcal{X}_n\right)}
  p^{\otimes n} (\mathbf{y}) \geq
  \max_{p\in\mathfrak{M_1}\left(\Np\right)}
  p^{\otimes n} (\mathbf{x}) 
 \geq   \max_{P\in \Lambda_{C\cdot^{-\alpha}}}  P^n(\mathbf{x}) =
 \hat{p}(\mathbf{x})\, .
\end{displaymath}
Together with Lemma \ref{nml:lem1} and the bounds on the redundancy of the
Krichevsky-Trofimov mixture \citep[See][]{krichevsky:trofimov:1981}, this implies: 
\begin{eqnarray*}
|\hbox{\texttt{C2}}\left(\mathbf{x}\right)|
& \leq & -\log \hat {p}\left(\mathbf{x}\right) + \frac{K_n}{2} \log n +O(1).
\end{eqnarray*}

Let $L(\mathbf{x})$ be the length of the code produced by algorithm
\texttt{CensoringCode} on the input string $\mathbf{x}$, then 
\begin{eqnarray*}
\lefteqn{\sup_{P\in\Lambda_{C\cdot^{-\alpha}}}\E_P\left[L(X_{1:n}) - \log 1/P^n(X_{1:n})\right]}\\
& \leq & \sup_{P\in\Lambda_{C\cdot^{-\alpha}}} \E_P\left[L(X_{1:n}) - \log 1/\hat{p}(X_{1:n})\right]\\
& \leq &   \sup_{P\in\Lambda_{C\cdot^{-\alpha}}^n} \E_P\left[
  \left|\hbox{\texttt{C2}}\left(X_{1:n}\right)\right| +  \log
  \hat{p}\left(X_{1:n}\right)
  +\left|\hbox{\texttt{C1}}\left(X_{1:n}\right)\right| \right] \\
& \leq & \sup_{\mathbf{x}} \left(\left|\hbox{\texttt{C2}}\left(\mathbf{x}\right)\right| +  \log
  \hat{p}\left(\mathbf{x}\right) \right) +\sup_{P\in\Lambda_{C\cdot^{-\alpha}}^n} \E_P\left[\left|\hbox{\texttt{C1}}\left(X_{1:n}\right)\right| \right]
\\
& \leq & \frac{\lambda n^{\frac{1}{\alpha}}}{2}\log n +
\frac{2C}{\left(\alpha-1\right)\lambda^{\alpha-1}}n^{\frac{1}{\alpha}}\log n\left(1+o(1)\right).
\end{eqnarray*}

The optimal value is $\lambda=\left(\frac{4C}{\alpha-1}\right)^{\frac{1}{\alpha}}$, for which we get:
$$R^+(Q^n, \Lambda_{C\cdot^{-\alpha}}^n)\leq \left(\frac{4Cn}{\alpha-1}\right)^{\frac{1}{\alpha}}\log n \left(1+o(1)\right).$$
\end{proof}

\section{Adaptive algorithms}
\label{sec:an-adaptive-version}

The performance of \texttt{CensoringCode} depends on the fit of the
cutoffs sequence to the tail behavior of the envelope. From the proof
of Theorem~\ref{th:redcc}, it should be clear that if
\texttt{CensoringCode} is fed with a source which marginal is
light-tailed, it will be unable to take advantage of this, and will
suffer from excessive redundancy. 

In this section, a sequence $(Q^n)_{n}$  of coding probabilities is said
to be \emph{approximately asymptotically adaptive} with respect to a collection
$(\Lambda_m)_{m\in \mathcal{M}}$ of  source classes if for each $P\in
\cup_{m\in \mathcal{M}} \Lambda_m,$ for each $\Lambda_m$  such that
$P\in \Lambda_m$: 
\begin{displaymath}
  D(P^n,Q^n) /R^+(\Lambda_m^n) \in O(\log n) \, .
\end{displaymath}
Such a definition makes sense, since we are considering massive source
classes which minimax redundancies are large but still  $o(\frac{n}{\log n})$.
If each class $\Lambda_m$ admits a non-trivial redundancy rate such that
$R^+(\Lambda^n_m)=o(\frac{n}{\log n})$, the existence of an approximately asymptotically adaptive sequence of  coding probabilities means that $\cup_m \Lambda_m$ is feebly universal (see the Introduction for a definition).

\subsection{Pattern coding}

First, the use of \emph{pattern coding} \cite{MR2095850,MR2234457} leads to
an almost minimax adaptive procedure for small values of $\alpha$,
that is heavy-tailed distributions.
Let us introduce the notion of pattern using the example of string
$\mathbf{x}=\hbox{``abracadabra''},$ which is made of $n=11$ characters. 
The information it conveys can be separated in two blocks:
\begin{enumerate}
\item a \emph{dictionary} $\Delta=\Delta(\mathbf{x})$: the sequence of
  distinct symbols occurring  in $\mathbf{x}$  in order of appearance
  (in the example, $\Delta=(a,b,r,c,d)$).
\item a \emph{pattern} $\psi=\psi(\mathbf{x})$ where
$\psi_i$ is the rank of $\mathbf{x}_i$ in the dictionary $\Delta$ (here, $\psi=1231415123$).
\end{enumerate}

Now, consider the algorithm coding message $\mathbf{x}$ by transmitting successively:
\begin{enumerate}
\item the dictionary $\Delta_n =  \Delta\left(\mathbf{x}\right)$ (by concatenating the Elias codes for
  successive symbols); 
\item and the pattern $\Psi_n=\psi\left(\mathbf{x}\right)$, using a minimax
  procedure for coding patterns as suggested by \cite{MR2095850} or
  \cite{MR2234457}.
Henceforth, the latter procedure is called pattern coding. 
\end{enumerate}
\begin{thm} \label{th:patterncoding}
Let $Q^n$ denote the coding probability associated with 
  the coding algorithm which consists in applying Elias penultimate
  coding to the dictionary $\Delta(\mathbf{x})$ of a string $\mathbf{x}$ from $ \Np^n$ and
  then pattern coding to the pattern $\psi(\mathbf{x}).$ 

Then for any $\alpha$ such that $1 < \alpha\leq 5/2$, there exists a
constant $K$ depending on $\alpha$ and $C$ such that
\begin{displaymath}
  R^+(Q^n,\Lambda^n_{C\cdot^{-\alpha}})  \leq K n^{1/\alpha}
\log n 
\end{displaymath}

\end{thm}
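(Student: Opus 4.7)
The plan is to decompose the codeword of $Q^n$ into its dictionary part and its pattern part, and to bound the two resulting contributions to the redundancy separately. For every $\mathbf{x}\in\Np^n$ the map $\mathbf{x}\mapsto(\Delta(\mathbf{x}),\psi(\mathbf{x}))$ is a bijection, so any memoryless $P\in\Lambda_{C\cdot^{-\alpha}}$ satisfies
\[
P^n(\mathbf{x}) \;=\; P^\psi\bigl(\psi(\mathbf{x})\bigr)\, P^{\Delta\mid\psi}\bigl(\Delta(\mathbf{x})\mid\psi(\mathbf{x})\bigr),
\]
where $P^\psi$ is the $P^n$-marginal of the pattern. By Kraft's inequality, the codeword lengths $|\texttt{Elias}(\Delta)|+|\texttt{PatCode}(\psi)|$ induce a subprobability that factorises as $Q^n(\mathbf{x})=Q^\Delta(\Delta)\, Q^\psi(\psi)$. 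Substituting both factorisations in the Kullback--Leibler divergence and using $H(\Delta\mid\psi)\geq 0$ yields
\[
D(P^n,Q^n) \;\leq\; D(P^\psi,Q^\psi) \;+\; \E_P\bigl[|\texttt{Elias}(\Delta(X_{1:n}))|\bigr].
\]

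For the dictionary term, set $\ell(k)=\lfloor\log k+2\log(1+\log k)+1\rfloor$. By linearity of expectation,
\[
\E_P\bigl[|\texttt{Elias}(\Delta(X_{1:n}))|\bigr] \;=\; \sum_{k\geq 1}\ell(k)\,\Pr_P\{k\in\Delta(X_{1:n})\} \;\leq\; \sum_{k\geq 1}\ell(k)\min\!\Bigl(1,\,\tfrac{nC}{k^\alpha}\Bigr),
\]
where I use $\Pr_P\{k\in\Delta\}=1-(1-P\{k\})^n\leq 1\wedge nP\{k\}$ and the envelope condition $P\{k\}\leq C/k^\alpha$. Splitting the sum at $k_0=\lceil(nC)^{1/\alpha}\rceil$, the head $\sum_{k\leq k_0}\ell(k)$ is of order $k_0\log k_0$, and the sum-to-integral comparisons already carried out in Lemma~\ref{nml:lem2} show that the tail $nC\sum_{k>k_0}\ell(k)/k^\alpha$ has the same order. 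Altogether, $\E_P[|\texttt{Elias}(\Delta(X_{1:n}))|]=O(n^{1/\alpha}\log n)$.

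For the pattern term, $Q^\psi$ is chosen to achieve (up to constants) the minimax pattern redundancy, so that $D(P^\psi,Q^\psi)\leq R^+_{\mathrm{pat}}(n)$ holds for every memoryless $P$, by definition of the minimax and independently of the envelope. The restriction $\alpha\leq 5/2$ then stems precisely from the best available upper bounds on $R^+_{\mathrm{pat}}(n)$: the sub-$\sqrt{n}$ estimates obtained in \cite{MR2095850,MR2234457} (of the order of $n^{2/5}$ up to logarithmic factors) guarantee $R^+_{\mathrm{pat}}(n)=O(n^{1/\alpha}\log n)$ as soon as $1/\alpha\geq 2/5$, i.e.\ $\alpha\leq 5/2$; in that regime the dictionary cost dominates and the announced bound follows with $K=K(\alpha,C)$.

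The main obstacle I expect is not the dictionary estimate, which is a routine variant of Lemma~\ref{nml:lem2}, but rather the careful invocation of the pattern coder. One must verify that the quoted bound on $R^+_{\mathrm{pat}}(n)$ applies uniformly over all memoryless sources on $\Np$ (not just over sources in $\Lambda_{C\cdot^{-\alpha}}$), that the coder can be implemented so the factorisation $Q^n=Q^\Delta\, Q^\psi$ and Kraft's inequality remain valid, and that the exponent $2/5$ is really attainable; each of these hinges on a nontrivial result from the pattern-coding literature, and it is the precise value of that exponent that fixes the upper end-point $5/2$ of the admissible range of $\alpha$.
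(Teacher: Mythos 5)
Your argument follows essentially the same route as the paper: split the code into dictionary and pattern parts, bound the expected Elias cost of the dictionary by $O(n^{1/\alpha}\log n)$, and invoke the $O(n^{2/5})$ pattern-redundancy bound (uniform over all memoryless sources on $\Np$, as the definition of $R^+_\Psi$ provides), which is dominated by $n^{1/\alpha}$ exactly when $\alpha\leq 5/2$. Your explicit Kullback--Leibler factorisation via $H(\Delta\mid\psi)\geq 0$ is a cleaner rendering of the paper's step $H(\Psi_{1:n})\leq H(X_{1:n})$, and your dictionary estimate using $1-(1-p)^n\leq 1\wedge np$ with a split at $k_0=\lceil (nC)^{1/\alpha}\rceil$ is a routine variant of the integral comparison carried out in the paper's appendix.
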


\begin{proof}
For a given value of $C$ and $\alpha,$
the Elias encoding of the dictionary  uses on average
\begin{eqnarray*}
\E\left[|\Delta_n|\right] 
& = & K' n^{\frac{1}{\alpha}}\log n
\end{eqnarray*}
bits (as proved in Appendix \ref{Edelta}), for some constant $K'$
depending on $\alpha$ and $C$.

If our pattern coder reaches (approximately) the minimax pattern redundancy 
$$ R^+_\Psi\left(\Psi_{1:n}\right) = \inf_{q\in\mathfrak{M}_1\left(\Np^n\right)} \sup_{P\in\mathfrak{M}_1\left(\Np\right)} \E_P \left[\log \frac{P^{\otimes n}(\Psi_{1:n})}{q(\Psi_{1:n})}\right], $$ 
the encoding of the pattern uses on average 
\begin{eqnarray*}
H(\Psi_{1:n}) + R^+_\Psi\left(\Psi_{1:n}\right)&\leq & H(X_{1:n}) + R^+_\Psi\left(\Psi_{1:n}\right) \hbox{ bits.}
\end{eqnarray*}
But in \cite{MR2095850}, the authors show that $R^+_\Psi\left(\Psi_{1:n}\right)$
is upper-bounded by $O\left(\sqrt{n}\right)$ and even
$O\left(n^{\frac{2}{5}}\right)$ according to \cite{shamir:2004}
(actually, these bounds are even satisfied by the minimax individual
pattern redundancy). 
\end{proof}

This remarkably simple method is however expected to have a  poor
performance when $\alpha$ is large. Indeed, it is proved in \cite{garivier:2006} that
$R^+_\Psi\left(\Psi_{1:n}\right)$ is lower-bounded by
$1.84\left(\frac{n}{\log n}\right)^{\frac{1}{3}}$ (see also \cite{MR2234457} and references therein),
 which indicates that pattern coding is probably suboptimal as soon as $\alpha$ is
larger than $3$. 

\subsection{An approximately asymptotically adaptive censoring code}

Given the limited scope of the pattern coding method, we will attempt
to turn the censoring code into an adaptive method, that is to tune 
the cutoff sequence so as to model the source statistics. 
As the cutoffs are chosen in such a way that they model the
tail-heaviness of the source, we are facing a tail-heaviness
estimation problem . In order to focus on the most important issues 
we do not attempt to develop a sequential algorithm. 
The $n+1$th cutoff $K_{n+1}$ is chosen according to  the number of \emph{distinct}  symbols
$Z_n(\mathbf{x})$ in $\mathbf{x}$. 

This is a reasonable method if the probability mass function defining
the source statistics $P^1$
actually decays like  $\frac{1}{k^\alpha}$.
Unfortunately, sparse distributions consistent with
$\Lambda_{\cdot^{-\alpha}}$ 
may lead this project astray. 
If, for example, $\left(Y_n\right)_n$ is a sequence of geometrically distributed
 random variables, and if $X_n=\left\lfloor2^{\frac{Y_n}{\alpha}}\right\rfloor$,
 then the distribution of the $X_n$ just fits in $\Lambda_{C\cdot^{-\alpha}}$ 
but obviously $Z_n(X_{1:n}) = Z_n\left(Y_{1:n}\right) = O\left(\log n\right)$.

Thus, rather than attempting to handle
$\cup_{\alpha>0} \Lambda_{\cdot^{-\alpha}},$ we focus on subclasses
$\cup_{\alpha>0} \mathcal{W}_{\alpha}$, where 
\begin{displaymath}
\mathcal{W}_{\alpha} =
 \left\{  P~:~P\in \Lambda_{\cdot^{-\alpha}},~0<\lim\inf_k k^\alpha P^1(k)\leq \lim\sup_k k^\alpha P^1(k)<\infty \right\}\, . 
\end{displaymath}

The rationale for tuning cutoff $K_n$  using $Z_n$ comes from the
following two propositions. 

\begin{prop}\label{ECn}
For every memoryless source  $P \in\mathcal{W}_\alpha$, there exist constants $c_1$ and $c_2$ such that for all positive integer $n$, 
\begin{equation*}
c_1 n^{1/\alpha} \leq  \E[Z_n] \leq c_2  n^{1/\alpha}.
\end{equation*}
\end{prop}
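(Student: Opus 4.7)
The plan is to write $Z_n = \sum_{k\geq 1} \mathbbm{1}\{k \text{ appears in } X_{1:n}\}$ so that, by independence of the $X_i$'s,
\[
\E[Z_n] \;=\; \sum_{k\geq 1}\Bigl(1-\bigl(1-P^1(k)\bigr)^n\Bigr).
\]
The two bounds then come from comparing this tail sum to the integral $\int x^{-\alpha}\dd x$, after invoking the two-sided control $c/k^\alpha \le P^1(k) \le C/k^\alpha$ valid for $k$ large enough (say $k\ge k_0$), with constants $c,C>0$ coming from the definition of $\mathcal W_\alpha$.

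For the upper bound, I would use $1-(1-p)^n\le \min(1,np)$ and split the sum at the threshold $k^\ast=\lceil (Cn)^{1/\alpha}\rceil$: the $k\le k^\ast$ terms contribute at most $k^\ast \le (Cn)^{1/\alpha}+1$, while for $k>k^\ast$ we have $nP^1(k)\le Cn/k^\alpha$, and a standard integral comparison yields
\[
\sum_{k>k^\ast} \frac{Cn}{k^\alpha} \;\le\; \frac{Cn}{\alpha-1}\,(k^\ast)^{1-\alpha} \;\le\; \frac{C^{1/\alpha}}{\alpha-1}\,n^{1/\alpha}.
\]
Adding a constant to absorb the finitely many terms with $k<k_0$ where only the envelope bound holds gives $\E[Z_n]\le c_2 n^{1/\alpha}$ uniformly in $n\ge 1$.

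For the lower bound, I would use $1-(1-p)^n\ge 1-e^{-np}$ and restrict the sum to indices $k_0\le k \le (cn)^{1/\alpha}$, so that $nP^1(k)\ge nc/k^\alpha\ge 1$ on this range, giving $1-e^{-nP^1(k)}\ge 1-e^{-1}$. Hence, for $n$ large enough that $(cn)^{1/\alpha}\ge k_0+1$,
\[
\E[Z_n] \;\ge\; (1-e^{-1})\bigl(\lfloor(cn)^{1/\alpha}\rfloor - k_0\bigr) \;\ge\; c_1'\, n^{1/\alpha}.
\]
Finally, the few small values of $n$ for which this asymptotic argument breaks down are handled by noting that $\E[Z_n]\ge P^1(k)>0$ for any fixed $k$, so $\E[Z_n]$ is bounded below by a positive constant on any finite range of $n$; one then shrinks $c_1'$ to some $c_1>0$ so that $c_1 n^{1/\alpha}\le \E[Z_n]$ holds for every $n\ge 1$.

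The argument is essentially a standard coupon-collector-style computation, so no real obstacle is expected; the only delicate point is that the equivalence $P^1(k)\asymp k^{-\alpha}$ supplied by $\mathcal W_\alpha$ is only asymptotic, which forces the book-keeping to separate a finite initial segment $k<k_0$ from the tail. This is purely a matter of hiding finite additive constants in $c_1,c_2$.
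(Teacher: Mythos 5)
Your argument is correct and rests on the same backbone as the paper's proof: express $\E[Z_n]=\sum_k\bigl(1-(1-P^1(k))^n\bigr)$, invoke the two-sided asymptotic $P^1(k)\asymp k^{-\alpha}$ for $k\geq k_0$ coming from $\mathcal W_\alpha$, and extract the $n^{1/\alpha}$ scaling by an integral comparison. The implementation differs, though. The paper bounds $(1-x)^n$ on both sides by exponentials (using $-(2\log 2)x\leq\log(1-x)\leq -x$ for $x\leq 1/2$), reduces both bounds to integrals of the form $\int_t^\infty\bigl(1-\mathe^{-Kn/x^\alpha}\bigr)\,\mathd x$, and then performs the change of variable $u=Kn/x^\alpha$ to pull out the factor $\frac{(Kn)^{1/\alpha}}{\alpha}A(\alpha)$ with $A(\alpha)=\int_0^\infty\frac{1-\mathe^{-u}}{u^{1+1/\alpha}}\,\mathd u$, yielding explicit constants. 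You instead split the sum at the characteristic threshold $k^\ast\sim(Kn)^{1/\alpha}$, bounding each term by $\min(1,np_k)$ above and by $1-\mathe^{-1}$ on the window $k_0\leq k\leq(cn)^{1/\alpha}$ below. These are essentially the same computation --- the substitution point $u=1$ is precisely your threshold $k^\ast$ --- but your version avoids the change of variables and the named integral $A(\alpha)$, making it more elementary. The trade-off is that the paper's lower bound naturally delivers a clean $(1-o(1))$ multiplicative statement, whereas yours must (and does) patch up the small-$n$ regime by observing that $\E[Z_n]$ is bounded away from zero and shrinking $c_1$, which is exactly what the uniform-in-$n$ form of the proposition requires. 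Both handle the finite initial segment $k<k_0$ by absorbing $O(1)$ into the constants, and both are sound.
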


\begin{prop}
The number of distinct symbols $Z_n$ output by a memoryless source
satisfies a Bernstein inequality:
\begin{equation}
  \label{eq:conc:Z_n}
  P \left\{ Z_n\leq \frac{1}{2}\EXP[Z_n]\right\} \leq \mathe^{- \frac{\EXP[Z_n]}{8}}
  \, . 
\end{equation}
\end{prop}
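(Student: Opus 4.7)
The plan is to recognize $Z_n$ as a \emph{self-bounding} function of the i.i.d.\ sample $X_{1:n}$, and then invoke the standard lower-tail concentration inequality for such functions (in the spirit of Boucheron--Lugosi--Massart). For each $i\in\{1,\ldots,n\}$, let $Z_n^{(i)}$ denote the number of distinct symbols in the leave-one-out sample $(X_j)_{j\neq i}$, a function of $n-1$ independent variables.

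First I would check two elementary properties. Dropping one coordinate lowers $Z_n$ by at most one, so $0\leq Z_n-Z_n^{(i)}\leq 1$, with equality to $1$ precisely when $X_i$ is a \emph{singleton} in $X_{1:n}$ (no other $X_j$ equals $X_i$). Summing over $i$, the sum $\sum_{i=1}^n(Z_n-Z_n^{(i)})$ is exactly the number of singletons, which is trivially at most the number of distinct symbols:
\begin{displaymath}
\sum_{i=1}^n \bigl(Z_n-Z_n^{(i)}\bigr)\;\leq\; Z_n.
\end{displaymath}
These two conditions are precisely the definition of a (non-negative, integer-valued) self-bounding function of independent coordinates.

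Next I would invoke the standard lower-tail concentration inequality for self-bounding functions, a consequence of the entropy method: for any such $f$ of $n$ independent variables and any $t\geq 0$,
\begin{displaymath}
\PROB\!\left\{f(X_{1:n})\leq \E[f(X_{1:n})]-t\right\}\;\leq\; \exp\!\left(-\frac{t^2}{2\,\E[f(X_{1:n})]}\right).
\end{displaymath}
Specializing to $f=Z_n$ and choosing $t=\E[Z_n]/2$ yields $\exp(-\E[Z_n]/8)$, which is exactly the bound announced in~(\ref{eq:conc:Z_n}).

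The only real work lies in the self-bounding verification above, and this is elementary; the concentration inequality itself is off the shelf, so I do not anticipate any serious obstacle. A coarser alternative would be to apply McDiarmid's bounded differences inequality (the differences are bounded by $1$), but this would yield $\exp(-\E[Z_n]^2/(2n))$, which is weaker than the target whenever $\E[Z_n]\ll n$---exactly the regime of interest here---so going through the self-bounding route is essential.
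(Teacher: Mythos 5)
Your proof is correct and takes essentially the same route as the paper. The paper identifies $Z_n$ as a \emph{configuration function} in Talagrand's sense (the size of a maximum subsequence whose entries are pairwise distinct, a hereditary property) and then cites the main result of Boucheron--Lugosi--Massart (2000); configuration functions are precisely self-bounding functions, so your direct verification of the two self-bounding conditions (increments in $\{0,1\}$, with the sum of increments equal to the number of singletons and hence bounded by $Z_n$) is just an explicit unpacking of the same observation before invoking the same lower-tail inequality $\PROB\{f\le\E f - t\}\le\exp(-t^2/(2\E f))$ with $t=\E[Z_n]/2$.
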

\begin{proof}
Note that $Z_n$  is a function of $n$ independent random
variables. Moreover, $Z_n$ is a configuration function as defined
defined by \cite{talagrand:1995} since $Z_n(\boldsymbol{x}) $ is the size of a maximum
subsequence of $\boldsymbol{x}$ satisfying  an hereditary property
(all its symbols are pairwise distinct). 
Using the main theorem in~\cite{boucheron:lugosi:massart:2000}, this is enough to conclude.
\end{proof}
Noting that $Z_n\geq 1$, we can derive the following inequality that will prove useful later on:
\begin{eqnarray}
\E\left[\frac{1}{Z_n^{\alpha-1}}\right] & = & \E\left[
  \frac{1}{Z_n^{\alpha-1}}\1_{Z_n > \frac{1}{2}\E[Z_n]} \right] +
\E\left[\frac{1}{Z_n^{\alpha-1}}\1_{Z_n\leq \frac{1}{2}\E[Z_n]}\right]
\notag \\
& \leq & \frac{1}{\left(\frac{1}{2}\E[Z_n]\right)^{\alpha-1}} + P\left(Z_n \leq \frac{1}{2}\E[Z_n]\right).\label{majexp}
\end{eqnarray}
We consider here a modified version of \texttt{CensoringCode} that
operates similarly, except that
\begin{enumerate}
\item the string $\mathbf{x}$ is first scanned completely to determine
  $Z_n\left(\mathbf{x}\right)$;
\item the constant cutoff
$\hat{K}_n= \mu Z_n$ 
is used for all symbols $\mathbf{x}_i$, $1\leq i \leq n$, where $\mu$ is some
positive constant.
\item the value of $K_n$ is encoded using Elias penultimate code and transmitted before \texttt{C1} and
  \texttt{C2}. 
\end{enumerate}
Note that this version of the algorithm is not sequential because of
the initial scanning.

\begin{algorithm}
\caption{AdaptiveCensoringCode}
\begin{algorithmic}
\label{algo:censoring:adpative}
\STATE $\text{\em cutoff} \leftarrow \mu\,Z_n(\mathbf{x})$ \COMMENT{Determination of the constant cutoff}
\STATE $\text{\em counts} \leftarrow [1/2, 1/2,\ldots ]$
\FOR{$i$ from $1$ to $n$}{
\IF{$x[i]\leq \text{\em cutoff}$}{
  \STATE 
$\text{ArithCode}(x[i],\text{\em     counts}[0:\text{\em  cutoff}]) $
}
\ELSE{
\STATE $\text{ArithCode}(0,\text{\em
    counts}[0:\text{\em  cutoff}]) $
\STATE \texttt{C1}$ \leftarrow $\texttt{C1}$\cdot \text{EliasCode}(x[i]) $
\STATE $\text{\em
    counts}[0] \leftarrow \text{\em
    counts}[0] +1$}
\ENDIF
\STATE $\text{\em
    counts}[x[i]] \leftarrow \text{\em
    counts}[x[i]] +1$
}
\ENDFOR
\STATE \texttt{C2}$ \leftarrow \text{ArithCode}()$
\STATE $\texttt{C}_1 \cdot \texttt{C}_2$
\end{algorithmic}
\end{algorithm}

  We may now assert.

\begin{thm}
The  algorithm \texttt{AdaptiveCensoringCode} is approximately
asymptotically adaptive with respect to $\bigcup_{\alpha>0}
\mathcal{W_\alpha}.$ 
\end{thm}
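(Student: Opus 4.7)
The plan is to prove that for each $P\in \mathcal{W}_\alpha$ the redundancy of \texttt{AdaptiveCensoringCode} satisfies $D(P^n,Q^n)=O_{P,\alpha}(n^{1/\alpha}\log n)$, and then to divide by the minimax redundancy $R^+(\mathcal{W}_\alpha^n)\gtrsim n^{1/\alpha}$ inherited (after a mild modification of the prior) from the proof of Theorem~\ref{th:powerlaw}. Since \texttt{AdaptiveCensoringCode} uses a single random cutoff $\hat K_n=\mu Z_n$ rather than an increasing sequence, the code length decomposes as
\[
L(\mathbf{x}) \;=\; |\mathrm{Elias}(\hat K_n)| + |\texttt{C2}(\mathbf{x})| + |\texttt{C1}(\mathbf{x})|,
\]
in which the header is $O(\log n)$ bits and is negligible.

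For the \texttt{C2} part, the argument of Lemma~\ref{nml:lem1} specializes (with constant cutoff) to $|\texttt{C2}(\mathbf{x})|\leq -\log m^{\ast}_n(\tilde{\mathbf{x}})+O(1)$, where $\tilde{\mathbf{x}}\in\{0,\dots,\hat K_n\}^n$ replaces every symbol exceeding $\hat K_n$ by $0$ and $m^{\ast}_n$ is the Krichevsky--Trofimov mixture over an alphabet of size $\hat K_n+1$. Exactly as in the proof of Theorem~\ref{th:redcc}, the Krichevsky--Trofimov redundancy bound yields
\[
|\texttt{C2}(\mathbf{x})|\;\leq\; -\log \hat p(\mathbf{x}) + \tfrac{\hat K_n}{2}\log n + O(1),
\]
so that Proposition~\ref{ECn} gives $\E_P|\texttt{C2}(X_{1:n})|-H(P^n)\leq \tfrac{\mu}{2}\E[Z_n]\log n+O(1)=O(n^{1/\alpha}\log n)$. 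For the \texttt{C1} part, use the upper-tail property $P^1(j)\leq D/j^\alpha$ valid eventually (with $D=D(P)$, courtesy of $\limsup_k k^\alpha P^1(k)<\infty$). Conditioning on $Z_n=z$ and reproducing the computation of Lemma~\ref{nml:lem2} with the constant cutoff $\mu z$ substituted for $K_j$ leads to $\E[|\texttt{C1}(X_{1:n})|\mid Z_n=z]\leq K_1 n\log n/(\mu z)^{\alpha-1}$, and taking expectation via inequality~(\ref{majexp}) together with Proposition~\ref{ECn} and the Bernstein bound~(\ref{eq:conc:Z_n}) produces
\[
\E_P|\texttt{C1}(X_{1:n})|\;\leq\; \frac{K_2\,n\log n}{(\E Z_n)^{\alpha-1}}+o(1)\;=\;O(n^{1/\alpha}\log n).
\]

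The main obstacle is this last step, because the cutoff is data-dependent: on the event $\{Z_n\leq \tfrac12 \E Z_n\}$ the \texttt{C1} bill can blow up, and one must rely on (\ref{majexp}) and the sub-Gaussian concentration of $Z_n$ to confirm that the contribution of this rare event is dominated. For the lower bound, a small modification of the prior in Theorem~\ref{th:powerlaw} (for instance, spreading a small $\varepsilon$ fraction of each block's mass uniformly across the block) forces $\liminf_k k^\alpha P^1_{\boldsymbol\theta}(k)>0$ so that every $P_{\boldsymbol\theta}$ lies in $\mathcal W_\alpha$, while leaving the mutual-information bound $R^+(\mathcal W_\alpha^n)\gtrsim n^{1/\alpha}$ unchanged up to constants. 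Combining the two estimates yields $D(P^n,Q^n)/R^+(\mathcal W_\alpha^n)=O(\log n)$, establishing the claimed approximate asymptotic adaptivity over $\bigcup_{\alpha>0}\mathcal{W}_\alpha$.
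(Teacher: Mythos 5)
Your upper-bound argument mirrors the paper's own proof almost step for step: you split the codelength into a negligible $O(\log n)$ header for transmitting $\hat K_n$, bound $|\texttt{C2}|$ via the Krichevsky--Trofimov regret over the random alphabet $\{0,\dots,\hat K_n\}$ and then take expectations with Proposition~\ref{ECn}, and bound $\E_P|\texttt{C1}|$ using the envelope tail $P^1(k)\leq C/k^\alpha$ together with inequality~(\ref{majexp}), which is where the Bernstein concentration of $Z_n$ enters. That is exactly the decomposition the paper uses, so the core of your argument is the same.

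What you add is a genuinely useful observation: since the adaptivity criterion divides by $R^+(\mathcal{W}_\alpha^n)$ and not by $R^+(\Lambda_{C\cdot^{-\alpha}}^n)$, and since $\mathcal{W}_\alpha$ is a \emph{proper subclass} of $\Lambda_{\cdot^{-\alpha}}$ (so monotonicity of the information radius cuts the wrong way), a separate lower bound $R^+(\mathcal{W}_\alpha^n)\gtrsim n^{1/\alpha}$ is really needed, and the sources $P_{\boldsymbol\theta}$ used in the proof of Theorem~\ref{th:powerlaw} do not lie in $\mathcal{W}_\alpha$ (their marginals vanish off a sparse set, so $\liminf_k k^\alpha P^1_{\boldsymbol\theta}(k)=0$). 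The paper's proof of the theorem stops once the three summands are shown to be $O(n^{1/\alpha}\log n)$ and leaves this step implicit. Your sketched fix --- leaking an $\varepsilon$ fraction of each block's mass uniformly over the block --- does put the prior sources in $\mathcal{W}_\alpha$, but the claim that the mutual-information bound survives ``up to constants'' is the one place that needs actual work: after the perturbation, observing a symbol in block $k$ no longer identifies $\theta_k$ deterministically, so the identity $I(\theta_k;X_{1:n}\mid N_k=1)=\log m$ must be replaced by a quantitative argument that the posterior on $\theta_k$ nearly concentrates (which holds when $\varepsilon$ is small because the distinguished symbol still carries a $1-\varepsilon$ share of the block's mass, but this has to be said). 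So your route is the paper's route plus an extra, worthwhile, step --- just be aware that the last piece of the lower bound is stated rather than proved.
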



\begin{proof}
Let us again denote by \texttt{C1}$(\mathbf{x})$ and
\texttt{C2}$(\mathbf{x})$ the two parts of the code-string associated
with $\mathbf{x}.$

Let $\hat{L}$ be the codelength of the output of algorithm
\texttt{AdaptiveCensoringCode}.

For any source  $P$:
\begin{eqnarray*}
\E_P \left[\hat{L}\left(X_{1:n}\right)\right] -H(X_{1:n})& = &\E_{P}\Bigg[ \ell(\hat{K}_n ) +
  \left|\hbox{\texttt{C1}}\left(X_{1:n}\right)\right| + \left|\hbox{\texttt{C2}}\left(X_{1:n}\right)\right| \Bigg] - n\sum_{k=1}^{\infty} P^1(k)\log \frac{1}{P_1(k)}\\
&\leq & \E_P\left[\ell(\hat{K}_n )\right] +  \E_P\left[ \left|\hbox{\texttt{C1}}\left(X_{1:n}\right)\right|\right] + \E_P\left[\left|\hbox{\texttt{C2}}\left(X_{1:n}\right)\right| - n\sum_{k=1}^{\hat{K}_n} P^1(k)\log \frac{1}{P_1(k)}\right].
\end{eqnarray*}
As function $\ell$ is increasing and
equivalent to $\log $ at infinity, the first summand is obviously $o\left(\E_P\left[\ell(\hat{K}_n )\right]\right)$.
Moreover, if $P\in \mathcal{W}_\alpha$ there exists $C$ such that $P^1(k)\leq\frac{C}{k^\alpha}$ and the second summand satisfies:
\begin{eqnarray*}
\E_P\left[ \left|\hbox{\texttt{C1}}\left(X_{1:n}\right)\right|\right] & = & \E_P\left[\sum_{k\geq\hat{K}_n +1}P^1(k) \ell (k)\right]\\
&\leq & nC\E_P\left[\int_{\hat{K}_n}^\infty\frac{\ell\left(x\right)}{x^\alpha} \text{d}x\right]\\
&=& nC\E_P\left[\frac{1}{\hat{K}_n^{\alpha-1}}\int_{1}^\infty\frac{\ell \left(\hat{K}_nu\right)}{u^\alpha} \text{d}u\right]\\
& \leq & nC\E_P\left[\frac{1}{\hat{K}_n^{\alpha-1}}\right] \int_{1}^\infty\frac{\log\left(u\right)}{u^\alpha} \,\text{d}u \left(1+o(1)\right)
\\ & = & O\left(n^{\frac{1}{\alpha}}\log n\right)
\end{eqnarray*}
by Proposition (\ref{ECn}) and Inequality (\ref{majexp}).

By Theorem \ref{prop:finitealph}, every string  $x\in\Np^n$ satisfies 
\begin{eqnarray*}
\left|\hbox{\texttt{C2}}\left(x\right)\right| - n\sum_{k=1}^{\hat{K}_n} P^1(k)\log \frac{1}{P_1(k)} & \leq & \frac{\hat{K}_n}{2} \log n +2.
\end{eqnarray*}
Hence, the third summand is upper-bounded as:
\begin{eqnarray*}
\E_P\left[\left|\hbox{\texttt{C2}}\left(X_{1:n}\right)\right| - n\sum_{k=1}^{\hat{K}_n} P^1(k)\log \frac{1}{P_1(k)}\right] &\leq &  \frac{\E_P\left[\hat{K}_n\right]}{2} \log n +2\\
 & = & O\left(n^{\frac{1}{\alpha}}\log n\right)
\end{eqnarray*}
which finishes to prove the theorem.
\end{proof}

\bibliographystyle{abbrvnat}
\bibliography{bigbib}

\appendices

\section{Upper-bound on minimax regret}
This sections contains the proof of the last inequality in Theorem
\ref{prop:finitealph}.

The minimax regret is not larger than the maximum regret of the
Krichevsky-Trofimov mixture over $m$-ary alphabet over strings of
length $n.$ The latter is classically upper-bounded by
\begin{displaymath}
  \log
  \left(\frac{\Gamma(n+\frac{m}{2})\Gamma(\frac{1}{2})}{\Gamma(n+\frac{1}{2})\Gamma(\frac{m}{2})}\right) \, , 
\end{displaymath}
as proved for example in \citep{csiszar:1990}.

Now  the Stirling approximation to the Gamma function
\citep[See][ Chapter XII]{whittaker:watson} asserts
that for any $x>0,$  there exists $\beta\in[0,1]$ such that
\begin{displaymath}
  \Gamma(x) = x^{x-\frac{1}{2}} \text{e}^{-x} \sqrt{2\pi}
  \text{e}^{\frac{\beta}{12 x}} \, . 
\end{displaymath}
Hence,
\begin{eqnarray}
   \log
  \left(\frac{\Gamma(n+\frac{m}{2})\Gamma(\frac{1}{2})}{\Gamma(n+\frac{1}{2})\Gamma(\frac{m}{2})}\right) &= &
\left(n+\frac{m-1}{2}\right) \log \left( n+\frac{m}{2} \right) - n\log \left( n+\frac{1}{2}  \right)- \frac{m-1}{2}\log\frac{m}{2}\label{ligne1}\\
& & -\left( n+\frac{m}{2} \right) + n+\frac{1}{2} +\frac{m}{2}\label{ligne2}\\
& & -\log \sqrt{2\pi} + \log\sqrt{2\pi} + \log\sqrt{2\pi}-\log\sqrt{\pi}\label{ligne3}\\
& &+\frac{\beta_1}{12\left( n+\frac{m}{2} \right)} -\frac{\beta_2}{12\left( n+\frac{1}{2} \right)} -\frac{\beta_3}{6m}\label{ligne4}
\end{eqnarray}
for some $\beta_1,\beta_2,\beta_3\in[0,1].$
Now, (\ref{ligne2})+(\ref{ligne3})+(\ref{ligne4}) is smaller than $\frac{1}{2}+\log \sqrt{2}+\frac{1}{12\left( n+\frac{m}{2} \right)}\leq 2$, and (\ref{ligne1}) equals:
$$
\frac{m-1}{2}\log n
 +\left(  n\log \frac{n+\frac{m}{2}}{n+\frac{1}{2}}-\frac{m-1}{2}\log e \right)
 + \left( \frac{m-1}{2}\log\frac{n+\frac{m}{2}}{\frac{m}{2}} + \frac{m-1}{2}\log e - \frac{m-1}{2}\log n \right)
$$
But 
\begin{eqnarray*}
n\log \frac{n+\frac{m}{2}}{n+\frac{1}{2}} = n\log \left( 1+\frac{\frac{m-1}{2}}{n+\frac{1}{2}} \right)\leq n\frac{\frac{m-1}{2}}{n+\frac{1}{2}}\log e \leq \frac{m-1}{2}\log e, 
\end{eqnarray*}
and
\begin{eqnarray*}
\frac{m-1}{2}\log\frac{n+\frac{m}{2}}{\frac{m}{2}} + \frac{m-1}{2}\log e - \frac{m-1}{2}\log n & = &
\frac{m-1}{2}\log \frac{\left( n+\frac{m}{2}\right)e}{\frac{nm}{2}} \leq 0 
\end{eqnarray*}
if $\left( n+\frac{m}{2}\right)e\leq \frac{nm}{2}$, that is $\frac{2}{m}+\frac{1}{n}\leq \frac{1}{e}$, which is satisfied as soon as $m$ and $n$ are both at least equal to $9$. For the smaller values of $m,n\in\{2,\ldots, 8\}$ the result can be checked directly.


\section{Lower bound on redundancy for power-law envelopes}
\label{sec:lower:bound:envelope}
In this appendix we derive a lower-bound for power-law envelopes using
Theorem \ref{prop:desperate}. 
Let $\alpha$ denote a real larger than $1.$
Let $C$ be such that $C^{1/\alpha}>4$.
As the envelope function is defined by $f(i)=1\wedge C/i^\alpha,$ the
constant 
\begin{math}
    c(\infty) = \sum_{i\geq 1} f(2i) 
\end{math}
satisfies
\begin{displaymath}
\frac{\alpha}{\alpha-1}\frac{C^{1/\alpha}}{2}-1 \leq c(\infty)\leq  \frac{C^{1/\alpha}}{2} +
\frac{C}{(\alpha-1)2^\alpha}
\left(\frac{C^{1/\alpha}}{2}\right)^{1-\alpha} \, . 
\end{displaymath}
 The condition on $C$ and $\alpha$ warrants that, for sufficiently large
 $p$, we have $c(p)>1$ (this is indeed true for $p>C^{1/\alpha}$).

We choose $p=an^{\frac{1}{\alpha}}$ for $a$ small enough to have 
$$\frac{(1-\lambda)
  C\epsilon}{\left(2a\right)^{\frac{1}{\alpha}}c(\infty)}>10,$$
so that condition 
\begin{math}
  (1-\lambda)n\frac{f(2p)}{c(p)}>\frac{10}{\epsilon}
\end{math}
is satisfied for $n$ large enough.
Then
\begin{equation*}
  R^+(\Lambda^n_f) \geq C(p,n,\lambda, \epsilon) \sum_{i=1}^p\left(
  \frac{1}{2} \log \frac{n\left(1-\lambda\right)\pi f(2i)}{2c(p)e}-\epsilon\right),
\end{equation*} 
where 
$C(p,n,\lambda, \epsilon) = 
\frac{1}{1+\frac{\left(2a\right)^\alpha c(\infty)}{C\lambda^2}}\left(1-
    \frac{4}{\pi}\sqrt{\frac{5c(\infty)\left(2a\right)^\alpha}{\left(1-\lambda\right)C\epsilon }}\right)$,  and
\begin{eqnarray*}
\sum_{i=1}^p\left(
  \frac{1}{2} \log \frac{n\left(1-\lambda\right)\pi
  f(2i)}{2c(p)e}-\epsilon\right) & \leq & \frac{p}{2}\log n
  -\frac{\alpha}{2}\sum_{i=1}^{p} \log i + \left(\frac{1}{2}\log
  \frac{\left(1-\lambda\right)\pi C}{2^{1+\alpha}c(\infty)e} -\epsilon\right) p\\
&=& \frac{p}{2}\log n - \frac{\alpha}{2}\left(p \log p -p +o(p)\right)
  +  \left(\frac{1}{2}\log
  \frac{\left(1-\lambda\right)\pi C}{2^{1+\alpha}c(\infty)e} -\epsilon\right) p\\
&=& \frac{an^{\frac{1}{\alpha}}}{2}\log n - \frac{\alpha}{2}\left(
  an^{\frac{1}{\alpha}} \log a+\frac{a}{\alpha}n^{\frac{1}{\alpha}}
  \log n -an^{\frac{1}{\alpha}}
  +o\left(n^{\frac{1}{\alpha}}\right)\right)\\& &\hspace{1cm} 
  +  \left(\frac{1}{2}\log
  \frac{\left(1-\lambda\right)\pi C}{2^{1+\alpha}c(\infty)e} -\epsilon\right) an^{\frac{1}{\alpha}}\\
&=& \left(\frac{\alpha}{2}\left(1-\log a\right)+\frac{1}{2}\log
  \frac{\left(1-\lambda\right)\pi C}{2^{1+\alpha}c(\infty)e} -\epsilon+o(1)\right) an^{\frac{1}{\alpha}}.
\end{eqnarray*}
For $a$ small enough, this gives the existence of a positive constant
$\eta$ such that $R^+(\Lambda^n_f) \geq \eta n^{\frac{1}{\alpha}}$.

\section{Proof of Proposition~\ref{ECn}}

Suppose that there exist  $k_0$, $c$ and $C$ such that for all $k\geq
k_0,$ $\frac{c}{k^\alpha}\leq p_k\leq \frac{C}{k^\alpha}$.

For $0\leq x \leq \frac{1}{2}$, it holds that $ -(2\log 2) x \leq \log (1-x) \leq -x$ and thus
$$\text{e}^{-(2\log 2) nx}\leq (1-x)^n \leq \text{e}^{-nx}.$$

Hence (as $p_k\leq \frac{1}{2}$ for all $k\geq 2$) :
\begin{eqnarray*}
\sum_{k=k_0}^\infty \left(
  1-\left(1-\frac{c}{k^\alpha}\right)^n\right)&  \leq  \E[Z_n] & \leq
\sum_{k=1}^\infty\left( 1-\left(1-\frac{C}{k^\alpha}\right)^n\right)  \\
\sum_{k=k_0}^\infty \left( 1-e^{-\frac{cn}{k^\alpha}}\right) & \leq
\E[Z_n] &\leq  1+ \sum_{k=2}^\infty \left( 1-e^{-\frac{(2\log
      2)Cn}{k^\alpha}}\right) \\
\int_{k_0}^\infty \left( 1-e^{-\frac{cn}{x^\alpha}}\right) \text{d} x
&\leq \E[Z_n] &\leq  1+ \int_1^\infty  \left(1-e^{-\frac{(2\log
      2)Cn}{x^\alpha}}\right) \text{d}x. 
\end{eqnarray*}
But, for any $t,K>0$, it holds that
\begin{eqnarray*}
\int_t^\infty  \left(1-e^{-\frac{Kn}{x^\alpha}}\right)\text{d} x & = &
\frac{\left(Kn\right)^{1/\alpha}}{\alpha} \int_0^{\frac{Kn}{t^\alpha}}
\frac{1-e^{-u}}{u^{1+1/\alpha}} \text{d} u.
\end{eqnarray*}

Thus, by noting that integral
$$A(\alpha)= \int_0^\infty  \frac{1-e^{-u}}{u^{1+1/\alpha}} \text{d} u,$$
is finite, we get 
\begin{eqnarray*}
\frac{c^{1/\alpha}A(\alpha)}{\alpha} n^{1/\alpha}\left(1-o(1)\right)& \leq  \E[Z_n] & \leq \frac{\left((2\log 2)C\right)^{1/\alpha} A(\alpha)}{\alpha}  n^{1/\alpha}.
\end{eqnarray*}

\section{Expected size of  dictionary encoding}
\label{Edelta}
Assume  that the probability mass function $(p_k)$ satisfies  
$p_k\leq \frac{C}{k^\alpha}$ for $C>0$ and all $k\geq 0$.
Then, using  Elias penultimate code for the first occurrence of each
 symbol in $X_{1:n},$ 
the expected length of the binary encoding of the dictionary  can  be
upper-bounded in the following way. Let $U_k$ be equal to $1$ if
symbol  $k$ occurs in $X_{1:n}$, and equal to $0$ otherwise.
\begin{eqnarray*}
\E\left[\left|\Delta_n\right|\right] & = & \E\left[\sum_{k=1}^\infty {U_k} \ell(k)\right] 
\\
& = & \sum_{k=1}^\infty \E\left[ {U_k} \ell(k)\right] \\
& \leq & \sum_{k=1}^\infty\left( 1-\left(1-\frac{C}{k^\alpha}\right)^n\right) \ell(k)\\
& \leq & 2 \left( 1+ \sum_{k=2}^\infty \left(1-e^{-\frac{(2\log 2)Cn}{k^\alpha}}\right)\log k\right)\\
& \leq & 2 \left(1+ \int_1^\infty  \left(1-e^{-\frac{(2\log
        2)Cn}{x^\alpha}}\right) \log x\, \text{d} x \right)\\
& \leq & 2 \left(\frac{\left(\left(2\log 2\right)Cn\right)^{1/\alpha}}{\alpha^2}\int_0^{\left(2\log 2\right)Cn}  \frac{1-e^{-u}}{u^{1+1/\alpha}} \log\left(\frac{\left(2\log 2\right)Cn}{u}\right)\text{d} u\right)\\
& \leq & T\, \frac{\left(\left(2\log 2\right)Cn\right)^{1/\alpha}}{\alpha^2}\log n\int_0^{\infty}  \frac{1-e^{-u}}{u^{1+1/\alpha}}\text{d} u
\end{eqnarray*}
for some positive constant $T$.


\subsection*{Acknowledgment}
The authors wish to thank L\'azslo Gy\"orfi for stimulating  
discussion and helpful comments. 
\end{document}